 \newtheorem{thm}{Theorem}[section]
 \newtheorem{cor}[thm]{Corollary}
 \newtheorem{lem}[thm]{Lemma}
 \newtheorem{prop}[thm]{Proposition}
 \theoremstyle{definition}
 \newtheorem{defn}[thm]{Definition}
 \theoremstyle{remark}
 \newtheorem{rem}[thm]{Remark}
 \numberwithin{equation}{section}
\newcommand{\cN}{{\mathcal N}}
\newcommand{\cM}{{\mathcal M}}
\newcommand{\cC}{{\mathcal C}}
\newcommand{\cS}{{\mathcal S}}
\newcommand{\cI}{{\mathcal I}}
\newcommand{\gB}{{\mathfrak B}}
\newcommand{\gH}{{\mathfrak H}}
\newcommand{\gF}{{\mathfrak F}}
\newcommand{\gG}{{\mathfrak G}}
\newcommand{\gD}{{\mathfrak D}}
\newcommand{\gL}{{\mathfrak L}}
\newcommand{\gN}{{\mathfrak N}}
\newcommand{\dD}{{\mathbb D}}
\newcommand{\dE}{{\mathbb E}}
\newcommand{\dT}{{\mathbb T}}
\newcommand{\dC}{{\mathbb C}}
\newcommand{\wh}{{\widehat{\hphantom{t}}}}
\begin{document}
%
%
%
%
%
%
%
%
%
\title[Pseudocontinuable Schur Functions]
      {Shift Operators Contained in Contractions, \\
       Pseudocontinuable Schur Functions and \\ 
       Orthogonal Systems on the Unit Circle}

\author[V.K. Dubovoy]{Vladimir K. Dubovoy}
\address{%
Department of Mathematics and Mechanics\\
Kharkov State University\\
Svobody Square 4 \\
UA-61077 Kharkov, 
Ukraine}
\email{DUBOVOY@online.kharkiv.com}

\author[B. Fritzsche, B. Kirstein]{Bernd Fritzsche, Bernd Kirstein\\}
\address{%
\vspace*{-7.3mm} \\
Fakult\"at f\"ur Mathematik und Informatik\\
Universit\"at Leipzig \\
Postfach 10 09 20 \\
D-04009 Leipzig, 
Germany}
\email{$\{$fritzsche,kirstein$\}$@math.uni-leipzig.de}

\author[A. Lasarow]{Andreas Lasarow}
\address{%
 Departement Computerwetenschappen \\
 K.U. Leuven \\
 Celestijnenlaan 200A - postbus: 02402 \\
 B-3001 Heverlee (Leuven), 
 Belgium}
\email{Andreas.Lasarow@cs.kuleuven.be}

\thanks{The work of the fourth author of the present paper was supported by the
German Research Foundation (Deutsche Forschungsgemeinschaft) on badge LA 1386/2--1.}

\subjclass{Primary 30E05, 47A57}

\keywords{Pseudocontinuability, Schur functions, associated probability measure on the unit circle, 
          Szeg\H{o} function, orthogonal polynomials on the unit circle, shift operators contained in contractions}

\date{\today}

\dedicatory{{\Large Dedicated to D.Z. Arov on the occasion of his 75th birthday}}

\begin{abstract}
The main aim of this paper is to establish the connection between well-known criteria for the pseudocontinuability
of a non-inner Schur function $\Theta$ in the unit disk (see Theorems~\ref{t3.9} and \ref{t4.2}). 
In a canonical way we associate a probability measure $\mu$ on the unit circle with $\Theta$.
One of the two criteria will be reformulated in the face of $\mu$, whereas the other one is drafted in view of a 
completely non--unitary contraction $T$ having $\Theta$
as its corresponding charac\-teris\-tic function. Our main result clarifies an immediate connection between 
the above-mentioned two criteria. 
For this reason, we construct a special orthogonal basis in the space
$L_{\mu}^2$ and rewrite these criteria in terms of this orthogonal basis.
(see Theorem~\ref{4.3}). 
\end{abstract}

\maketitle

\vspace{-0.5cm}

\setcounter{section}{-1}
\section{Introduction}
\label{s0}


The central topic of this paper is the discussion of pseudocontinuability for Schur functions in the unit disk. 
Pseudocontinuability is a particular type of meromorphic continuation for functions from the meromorphic
Nevanlinna class in the unit disk. This concept originated in Shapiro's papers \cite{14a}, \cite{SH} and was then
systematically discussed a little bit later in the context of invariant subspaces for the backward shift in the
landmark paper Douglas/Shapiro/Shields \cite{DSS}. (Regarding modern treatments of this area we refer
the reader to the monographs Cima/Ross~\cite{CR} and Ross/Shapiro \cite{RS}.) 

Important applications of pseudocontinuability are contained in the work of D.Z. Arov on Darlington synthesis,
$J$-inner functions, and related topics (see, e.g., \cite{0} and \cite{0a}).

A further domain of application of pseudocontinuable functions is rational approximation. 
This theme which originated from a series of papers by G.Ts. Tumarkin (see \cite{16a}, \cite{16b}, and \cite{16c})
was treated by Katsnelson in \cite{Ka}. The paper \cite{Ka}
is recommended for some other reasons too. It contains an extensive historical overview on the investigation of
pseudocontinuable functions which also takes into account matrix-valued functions.
Moreover, the paper \cite{Ka} is very well written from the pedagogical point of view. It strongly influenced our
approach to introducing pseudocontinuable functions (see Section~\ref{s2}).

The present paper continues recent investigations on several questions of pseudocontinuability of Schur functions 
in the unit disk (see \cite{BDFK05} and \cite{Dub06}). Our main aim is to establish a direct relation between
two well-known criteria for pseudocontinuability of non-inner Schur functions in the unit disk (see Theorem~\ref{t3.9} 
and Theorem \ref{t4.2}). The main result of this paper is a new characterization of pseudocontinuability of a non--inner Schur function $\Theta$ in the unit disk.
This characterization is expressed in terms of a special orthogonal basis in an
$L_{\mu}^2$ space on the unit circle, where $\mu$ is some probability measure
on the unit circle which is canonically associated with $\Theta$ (see Theorem \ref{4.3}).

The paper is organized as follows. 
In Section~\ref{s1}, we recall that the set of Schur functions in the unit disk stands in bijective correspondences to
the set of normalized Carath\'eodory functions in the unit disk and to the set of probabi\-li\-ty measures on the Borel 
$\sigma$-algebra of the unit circle.
This will give us the possibility to study the pseudocontinuability of a given Schur function in terms of the associated 
normalized Carath\'eodory function and in terms of the associated probability measure, respectively.

In Section~\ref{s2} (influenced by Katsnelson \cite{Ka}), we summarize essential facts on meromorphic functions 
and the concept of pseudocontinuability which is due to H.S. Shapiro.

Section~\ref{s3} contains a function--theoretic approach to the study of pseudocontinuability of non-inner Schur functions 
in the unit disk. We will recognize that a function of this class admits a pseudocontinuation if and only if the associated 
probability measure satisfies the Szeg\H{o} condition and the corresponding Szeg\H{o} function is pseudocontinuable (see Theorem~\ref{t3.9}).

The central theme of Section~\ref{s4} is an operator--theoretic approach to the investigation of pseudocontinuability of 
non-inner Schur functions in the unit disk. The starting point there is the observation that an arbitrary Schur function
can be re\-pre\-sented as a characteristic function of some completely non--unitary contraction in a separable complex Hilbert space.
Then the pseudocontinuability of a non-inner Schur function $\Theta$ can be characterized in terms of the contraction $T$
having $\Theta$ as its characteristic function. More precisely, the maximal unilateral shifts $V_T$ and $V_{T^\ast}$
contained in $T$ and $T^\ast$, respectively, have to fulfill certain conditions of mutual interrelations (see Theorem~\ref{t4.2}). 

In Section~\ref{s5}, we start from a probability measure $\mu$ on the Borel $\sigma$-algebra of the unit circle.
The main aim is to construct a unitary colligation $\Delta_{\mu}$, the characteristic function $\Theta_{\Delta_{\mu}}$ of
which coincides with the Schur function $\Theta_{\mu}$ associated with the measure $\mu$ according to Section~\ref{s1}.

In Section~\ref{s6}, we consider a probability measure $\mu$ on the Borel $\sigma$-algebra of the unit circle for which
the polynomials are non--complete in the space $L^2_{\mu}$. In this case the question arises as to the existence of a 
natural completion of the system of orthonormal polynomials in $L^2_{\mu}$ to a complete orthonormal system in $L^2_{\mu}$.
The primary concern of Section~\ref{s6} is to construct such a natural completion.
In the particular case that the measure $\mu$ is associated with a pseudocontinuable
non-inner Schur function $\Theta$, we will show that the functions obtained by
completing the orthonormal system of polynomials are boundary values of functions
belonging to the meromorphic Nevanlinna class in the unit disk (see Propoisition \ref{p6.14a}).

The goal of Section~\ref{s7} is to reformulate the characterization of the pseudocontinuability of a non-inner Schur 
function given by Theorem~\ref{t4.2} in terms of the complete orthonormal system which was created in Section~\ref{s6} 
(see Theorem \ref{4.3}).  

Finally, Section~\ref{s8} is aimed at determining and further clarifying a direct connection between the criteria of pseudocontinuability 
of a non-inner Schur function which were pointed out in Theorem~\ref{t3.9} and Theorem \ref{t4.2}, respectively.


\section{Interrelated triples consisting of a Schur function, a normalized Carath\'eodory function and a probability measure}
\label{s1}


Let $\mathbb D := \{\zeta\in\mathbb C : |\zeta| < 1\}$ and $\mathbb T := \{t\in\mathbb C : |t| = 1\}$ 
be the unit disk and the unit circle in the complex plane $\mathbb C$, respectively. 
The central object in this paper is the Schur class $\cS (\mathbb D)$ of all functions $\Theta :\mathbb D\to\mathbb C$ 
which are holomorphic in $\mathbb D$ and satisfy $\Theta (\mathbb D)\subseteq \mathbb D\cup\mathbb T$.
Our main aim is to study the phenomenon of pseudocontinuability for functions belonging to $\cS (\mathbb D)$.
In order to allow for a more effective treatment of this question, we first consider certain important objects that relate bijectively to the class $\cS (\mathbb D)$.
This is the main content of the present section.

Let $\Theta\in\cS (\mathbb D)$. Then the function $\Phi:\mathbb D\to\mathbb C$ defined by
\begin{equation}\label{Nr.0.1}
\Phi (\zeta) := \frac{1+\zeta\Theta (\zeta)}{1-\zeta\Theta (\zeta)}
\end{equation}
is holomorphic in $\mathbb D$ and satisfies
\begin{equation}\label{Nr.0.1b}
 {\rm Re}\,[\Phi (\zeta)] > 0,\quad \zeta\in\mathbb D, 
\end{equation}
and
\begin{equation}\label{Nr.0.1c}
 \Phi (0) = 1 . 
\end{equation}
Let $\cC (\mathbb D)$ be the Carath\'eodory class of all functions $\Psi :\mathbb D\to\mathbb C$ 
which are holomorphic in $\mathbb D$ and satisfy ${\rm Re}\,[\Psi(\zeta)] \geq 0$ and let
\begin{equation}\label{Nr.0.1d}
  \cC^0 (\mathbb D) := \{\Psi \in\cC (\mathbb D) :\Psi (0) = 1\}.
\end{equation}
In view of \eqref{Nr.0.1}, \eqref{Nr.0.1b}, \eqref{Nr.0.1c}, and \eqref{Nr.0.1d} we have
\begin{equation}\label{Nr.0.1e}
  \Phi \in \cC^0 (\mathbb D). 
\end{equation}
It can be easily verified that via \eqref{Nr.0.1} a bijective correspondence between the classes $\cS (\mathbb D)$
and $\cC^0 (\mathbb D)$ is established. Note that from \eqref{Nr.0.1} it follows that
\begin{equation}\label{Nr.0.1f}
 \zeta\Theta (\zeta) = \frac{\Phi (\zeta) - 1}{\Phi (\zeta) + 1}, \quad \zeta\in\mathbb D.
\end{equation}

The class $\cC (\mathbb D)$ is intimately related with the class $\cM_+ (\mathbb T)$
of all finite nonne\-ga\-tive measures on the Borel $\sigma$-algebra $\gB$ of $\mathbb T$.
According to the Riesz-Herglotz Theorem (see, e.g., \cite[Theorem 2.2.2]{DFK92}): 
For each function $\Phi \in\cC (\mathbb D)$ there exists a unique measure
$\mu\in\cM_+ (\mathbb T)$ and a unique number $\beta\in\mathbb R$ such that
\begin{equation}\label{Nr.0.3}
 \Phi (\zeta) = \int_\mathbb T \frac{t+\zeta}{t-\zeta}\,\mu (dt) + i\beta,
 \quad \zeta\in\mathbb D.
\end{equation}
Obviously, $\beta = {\rm Im}\, [\Phi (0)]$. On the other hand, it can be easily checked that, for arbitrary
$\mu\in\cM_+ (\mathbb T)$ and $\beta \in\mathbb R$, the function $\Phi$, which is defined by the
right hand side of (\ref{Nr.0.3}), belongs to $\cC (\mathbb D)$. 
If we consider the Riesz-Herglotz representation (\ref{Nr.0.3}) for a function $\Phi\in\cC^0 (\mathbb D)$,
then $\beta =0$ and $\mu$ belongs to the set $\cM_+^1 (\mathbb T)$ of all probability measures
belonging to $\cM_+ (\mathbb T)$ (i.e. $\mu (\mathbb T) = 1)$.
Actually, in this way we obtain a bijective correspondence between the classes $\cC^0 (\mathbb D)$
and $\cM_+^1 (\mathbb T)$.  

In the result of the above considerations we obtain special ordered triples $[\Theta,\Phi,\mu]$ consisting
of a function $\Theta\in\cS (\mathbb D)$, a function $\Phi \in \cC^0 (\mathbb D)$, and a measure 
$\mu\in\cM_+^1 (\mathbb T)$ which are interrelated in such way that each of these three objects uniquely
determines the other two. For that reason, if one of the three objects is given, we will say that the two others are associated with it.
Based on this procedure, in Section~\ref{s3} we will characterize the pseudocontinuability of a function
$\Theta\in\cS (\mathbb D)$ in terms of the associated objects $\Phi \in \cC^0 (\mathbb D)$ and  
$\mu\in\cM_+^1 (\mathbb T)$.


\section{Some basic facts on classes of meromorphic functions and pseudocontinuability}
\label{s2}


In this section, we summarize some facts on several classes of meromorphic functions which will be used later. (A detailed treatment of this subject can be found, e.g., in Duren \cite{Du} and Nevanlinna \cite{N}.)
In particular, we will recall the concept of pseudocontinuation.

We will use $\overline{\mathbb C}$ and $\mathbb E$ to denote the extended complex plane and the exterior of the
closed unit disk, respectively, i.e. $\overline{\mathbb C}:={\mathbb C}\cup\{\infty\}$ and 
$\mathbb E:=\overline{\mathbb C}\setminus (\mathbb D \cup \mathbb T)$.
Furthermore, the symbol $m$ stands for the normalized linear Lebesgue-Borel measure on $\mathbb T$. So we have
$m(\mathbb T)=1$, i.e. $m\in\cM_+^1 (\mathbb T)$.  

Assume that ${\rm G}$ is one of the domains $\mathbb D$ or $\mathbb E$. 
Let $\cN\cM ({\rm G})$ be the Nevanlinna class of all functions which are 
meromorphic in ${\rm G}$ and which can be represented as a quotient of two bounded 
holomorphic functions in ${\rm G}$. If $h\in\cN\cM (\dD)$ (resp. $h\in\cN\cM (\dE)$), then a well-known 
theorem due to Fatou implies that there is a set $B_0\in\gB$ with $m(B_0) = 0$ and a 
Borel measurable function $\underline{h} :\dT\to\dC$ such that
\[ \lim_{r\to 1-0} h(rt) = \underline{h} (t) \quad 
   \Bigl(\mbox{resp.}\, \lim_{r\to 1+0} h(rt) = \underline{h} (t) \Bigr) \]
for all $t\in\dT\setminus B_0$. In the following, we will continue to use the symbol $\underline{h}$
to denote such a boundary function of a function $h$ which belongs to $\cN\cM (\dD)$ or $\cN\cM (\dE)$. 

There is a standard bijective correspondence between the sets $\cN\cM (\dD)$ and $\cN\cM (\dE)$.
In order to describe this we introduce some further notation. Let ${\rm G}$ be a nonempty subset of $\overline{\mathbb C}$.
Then ${\rm G}^{\wh}$ denotes that subset of $\overline{\mathbb C}$ which is symmetric to ${\rm G}$ with respect
to the unit circle, i.e.
\[ {\rm G}^{\wh}:= \left\{ z\in\overline{\mathbb C} : \frac{1}{z^{\ast}}\in {\rm G} \right\} \]
with the usual conventions $\frac{1}{0^{\ast}}:=\infty$ and $\frac{1}{\infty^{\ast}}:=0$. 
Here $z^{\ast}$ means the complex conjugate of $z$.
If $f:{\rm G}\to\dC$, then $f^{\wh}$ stands for the complex-valued function which is defined on ${\rm G}^{\wh}$ by
\[  f^{\wh}(z):= \left(f\Bigl(\frac{1}{z^{\ast}}\Bigr)\right)^{\ast} .  \]
Now the bijective correspondence between the sets $\cN\cM (\dD)$ and $\cN\cM (\dE)$ can be expressed as follows.

\begin{rem} \label{r2.1}
 If $h\in\cN\cM (\dD)$ (resp. $h\in\cN\cM (\dE)$), then the function $h^{\wh}$ belongs to 
 $\cN\cM (\dE)$ (resp. to $\cN\cM (\dD)$) and $\underline{h}^{\ast}$ is the boundary function of $h^{\wh}$.
\end{rem}

\begin{rem} \label{r2.2}
 Let ${\rm G}\in\{\dD,\dT\}$ and let $h\in\cN\cM ({\rm G})$ not vanish identically in ${\rm G}$. Then the function
 $\ln_{\!}|\underline{h}|$ is $m$-integrable and $m(\{\underline{h}=0\})=0$.
\end{rem}

The set $\cN\cM (\dD)$ (resp. $\cN\cM (\dE)$) is obviously an algebra over $\dC$. The subalgebra
of all $h\in\cN\cM (\dD)$ which are holomorphic in $\dD$ will be designated by $\cN (\dD)$.
The class $\cN (\dD)$ can be characterized as the set of all functions $h$ which are holomorphic
in $\dD$ and which satisfy
\begin{equation*}
 \sup_{r\in [0,1)} \int_\dT \ln^{+\!}|h(rt)| \,m(dt) < +\infty ,
\end{equation*}
where $\ln^{+\!}x := \max\{\ln x,0\}$ for each $x\in [0,+\infty)$. 
If a function $D:\dD\to\dC$ admits a representation
\begin{equation*}
 D(\zeta) = \alpha\cdot \exp \left\{
 \int_\dT \frac{t+\zeta}{t-\zeta}  \ln [k(t)] \,m(dt) \right\}, 
 \quad \zeta\in\dD,
\end{equation*}
with some $\alpha\in\dT$ and some Borel measurable function $k:\dT \to [0,+\infty)$ satisfying
\begin{equation*}
 \int_\dT |\ln [k(t)] | \,m(dt) < +\infty,
\end{equation*}
then $D$ belongs to $\cN (\dD)$ and $|\underline{D}|=k$ holds $m$-a.e. on $\dT$. 
Such functions $D$ are called outer functions in $\cN (\dD)$.

\begin{rem} \label{r2.3}
 Let $D$ be an outer function in $\cN (\dD)$. Then $D(\zeta)\not=0$ for all $\zeta\in\dD$ and the function
 $D^{-1}$ is an outer function in $\cN (\dD)$ as well. 
 Moreover, if $D_1$ and $D_2$ are outer functions in $\cN (\dD)$, then $D_1 D_2$ is an outer 
 function in $\cN (\dD)$.
\end{rem}

An outer function $D$ in $\cN (\dD)$ is called normalized if $D(0)\in (0,+\infty)$.

\begin{rem} \label{r2.4}
 Let $\Phi \in \cC(\mathbb D)$ be a function which does not vanish identically in $\dD$. Then $\Phi$
 is an outer function in $\cN (\dD)$.
 If $\Phi \in \cC^0 (\mathbb D)$, then $\Phi$ is a normalized outer function in $\cN (\dD)$.
\end{rem}

In view of Remark~\ref{r2.3} and Remark~\ref{r2.4} one can conclude the following.

\begin{rem} \label{r2.4b}
 Let $\Phi \in \cC(\mathbb D)$. Then $\Phi+1$ is a function belonging to $\cC(\mathbb D)$ which
 vanishes nowhere in $\dD$. Thus, $\Phi+1$ and $(\Phi+1)^{-1}$ are outer functions in $\cN (\dD)$.
\end{rem}

\begin{prop} \label{p2.6}
 Let $w:\dT\to [0,+\infty)$ be an $m$-integrable function which satisfies the Szeg\H{o} condition
\begin{equation}\label{Nr.3.4a}
  \int_\dT \ln [w(t)] \,m(dt)  > -\infty .
\end{equation}
 Then there is a unique normalized outer function $D$ in $\cN (\dD)$ such that the relation 
 $|\underline{D}|^2=w$ holds $m$-a.e. on $\dT$. This function $D:\dD\to\dC$ is given by
\begin{equation}\label{Nr.3.5}
  D(\zeta) = \exp \left\{\frac{1}{2} \int_\mathbb T \frac{t+\zeta}{t-\zeta} \ln [w(t)] \,m(dt)\right\} .
\end{equation}
\end{prop}

The function $D$ which appears in Proposition~\ref{p2.6} is called the Szeg\H{o} function associated
with $w$ (see also Nikishin/Sorokin \cite[Chapter~5 in \S 5]{NS}).

Clearly, the Schur class $\cS(\mathbb D)$ is a subclass of $\cN (\dD)$. A function $I\in\cS(\mathbb D)$
is called an inner function if the relation $|\underline{I}|=1$ holds $m$-a.e. on $\dT$. We denote by $\cI(\mathbb D)$ 
the subclass of $\cS(\mathbb D)$ consisting of all inner functions.

\begin{prop} \label{p2.8}
 {\rm (V. I. Smirnov)} \\
 Let $h\in\cN\cM(\dD)$. Then there exists an outer function $E$ in $\cN (\dD)$ and some functions
 $I_1,I_2\in\cI(\mathbb D)$ such that
 \[ h = E \cdot I_1 \cdot I_2^{-1} . \] 
 In the case of $h\in\cN(\dD)$, $I_2$ can be chosen as constant function with value $1$.
\end{prop}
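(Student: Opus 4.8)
The plan is to reduce everything to the canonical inner--outer factorization of bounded holomorphic functions and then to read off the desired decomposition. First I would dispose of the trivial case $h\equiv 0$ and otherwise write $h=f\cdot g^{-1}$ with $f,g$ bounded and holomorphic in $\dD$ and neither vanishing identically, as is possible by the very definition of $\cN\cM(\dD)$. The task then splits into factoring each of the two bounded holomorphic functions $f$ and $g$ separately.

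The core step is to establish, for a bounded holomorphic $f\not\equiv 0$, a factorization $f=I_f\cdot O_f$ with $I_f\in\cI(\dD)$ and $O_f$ an outer function in $\cN(\dD)$. Here I would proceed constructively. By Remark~\ref{r2.2} the function $\ln|\underline f|$ is $m$-integrable, and since $f$ is bounded the function $w:=|\underline f|^2$ is $m$-integrable and satisfies the Szeg\H{o} condition; thus Proposition~\ref{p2.6} produces a normalized outer function $O_f$ with $|\underline{O_f}|=|\underline f|$ $m$-a.e.\ on $\dT$. Setting $I_f:=f\cdot O_f^{-1}$, Remark~\ref{r2.3} guarantees that $O_f^{-1}$ is outer and nonvanishing in $\dD$, so that $I_f$ is holomorphic in $\dD$ with $|\underline{I_f}|=1$ $m$-a.e. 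The point that has to be checked --- and which I expect to be the main obstacle --- is that $I_f$ is actually an element of $\cS(\dD)$, i.e.\ that it is bounded by $1$ throughout $\dD$ and not merely on the boundary. This is precisely Smirnov's maximum principle: a function in $\cN(\dD)$ whose boundary modulus is bounded by $1$ is itself bounded by $1$ in $\dD$. Granting this, $I_f\in\cI(\dD)$, and the core step is complete.

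With the core step in hand, the general assertion is routine assembly. Factoring $f=I_f O_f$ and $g=I_g O_g$ gives
\[
 h=\frac{f}{g}=\bigl(O_f\cdot O_g^{-1}\bigr)\cdot I_f\cdot I_g^{-1},
\]
and by Remark~\ref{r2.3} the product $E:=O_f\cdot O_g^{-1}$ is again outer in $\cN(\dD)$; hence $h=E\cdot I_1\cdot I_2^{-1}$ with $I_1:=I_f$ and $I_2:=I_g$. For the holomorphic case I would argue that the denominator inner factor can be suppressed altogether: if $h\in\cN(\dD)$, then $h$ itself plays the role of $f$ in the core step, so the construction yields directly $h=I_h\cdot O_h$ with $I_h\in\cI(\dD)$ and $O_h$ outer, and taking $E:=O_h$, $I_1:=I_h$, and $I_2\equiv 1$ finishes the argument. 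The substantive content hidden in this last reduction is again Smirnov's theorem: the defining growth condition on $\cN(\dD)$ forces the quotient $h\cdot O_h^{-1}$ to carry no singular inner denominator, so that it is a genuine inner function and no factor $I_2^{-1}$ survives.
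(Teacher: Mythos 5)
The paper offers no proof of this proposition---it is quoted as a classical theorem of Smirnov---so your proposal can only be measured against the standard argument, whose outline you reproduce correctly for the first assertion: write $h=f\cdot g^{-1}$ with $f,g$ bounded holomorphic, manufacture the outer factor of a bounded function from its boundary modulus via Proposition~\ref{p2.6} (legitimate there, since $w=|\underline{f}|^2$ is $m$-integrable for bounded $f$ and $\ln|\underline{f}|\in L^1$ by Remark~\ref{r2.2}), and divide. The flaw sits exactly in the step you flag as the main obstacle. The principle you invoke---``a function in $\cN(\dD)$ whose boundary modulus is bounded by $1$ is itself bounded by $1$ in $\dD$''---is false for the paper's $\cN(\dD)$, which is the \emph{full} Nevanlinna class: the function $h_0(\zeta):=\exp\bigl(\frac{1+\zeta}{1-\zeta}\bigr)$ satisfies $\int_\dT \ln^{+\!}|h_0(rt)|\,m(dt)=\int_\dT\frac{1-r^2}{|1-rt|^2}\,m(dt)=1$ for every $r\in[0,1)$, so $h_0\in\cN(\dD)$, and $|\underline{h_0}|=1$ holds $m$-a.e., yet $h_0$ is unbounded (it is the reciprocal of the singular inner function $\exp\bigl(-\frac{1+\zeta}{1-\zeta}\bigr)$). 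Smirnov's maximum principle is valid on the smaller Smirnov class $N^{+}$, which is what Duren's Theorem~2.11 (cited by the paper in the proof of Proposition~\ref{p2.14}, where it is applied to an outer function, hence legitimately) actually asserts. Your step is repairable: $I_f=f\cdot O_f^{-1}$ with $f\in H^\infty$ and $O_f^{-1}$ outer lies in $N^{+}$, since $N^{+}$ is an algebra containing $H^\infty$ and all outer functions; alternatively and more elementarily, for bounded $f$ the subharmonicity estimate $\ln|f(\zeta)|\le\int_\dT {\rm Re}\,\bigl(\frac{t+\zeta}{t-\zeta}\bigr)\ln|\underline{f}(t)|\,m(dt)=\ln|O_f(\zeta)|$ gives $|I_f|\le 1$ directly. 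With that correction the first assertion is proved. (A side remark: the case $h\equiv 0$ cannot be ``disposed of''---outer functions are zero-free, so the asserted factorization simply does not exist for $h\equiv 0$; the statement tacitly excludes it.)

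Your final paragraph, however, contains an error that cannot be repaired: the growth condition defining $\cN(\dD)$ does \emph{not} force $h\cdot O_h^{-1}$ to carry no singular inner denominator. The same $h_0$ as above has $|\underline{h_0}|=1$ $m$-a.e., so its outer factor is a unimodular constant and $h_0\cdot O_{h_0}^{-1}$ is, up to a constant, $h_0$ itself---precisely a reciprocal singular inner function, and not inner. Indeed, with the paper's definition of $\cN(\dD)$ the second sentence of the proposition is itself false as literally stated: if $h_0=E\cdot I_1$ with $E$ outer and $I_1\in\cI(\dD)$, then $|\underline{E}|=1$ $m$-a.e. forces $|E|\equiv 1$ in $\dD$ by the defining formula for outer functions, whence $E$ is a unimodular constant by Remark~\ref{r2.9} and $h_0$ would be bounded---a contradiction. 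The correct hypothesis for the conclusion $I_2\equiv 1$ is membership in the Smirnov class $N^{+}$ (equivalently, representability as a quotient with \emph{outer} denominator); the paper conflates $N$ and $N^{+}$ here, and in the one place where this clause is actually used (the proof of Lemma~\ref{3.9}, for a function of $H^2$ type) the function at hand does lie in $N^{+}$, so no damage propagates. A smaller gap in the same paragraph: your core step was established only for \emph{bounded} $f$, but you then let an arbitrary $h\in\cN(\dD)$ ``play the role of $f$''; for unbounded $h$, Proposition~\ref{p2.6} need not apply, since $|\underline{h}|^2$ may fail to be $m$-integrable, and one should instead define the outer factor directly from $k=|\underline{h}|$ via the paper's formula for outer functions, which requires only $\ln|\underline{h}|\in L^1$ (Remark~\ref{r2.2}).
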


\begin{rem} \label{r2.9}
 Let $E$ be an outer function in $\cN (\dD)$ which also belongs to $\cI(\mathbb D)$. Then
 $E$ is a constant function with unimodular value. 
\end{rem}

Now we recall the concept of pseudocontinuability. Let $h\in\cN\cM(\dD)$. Then one says that $h$
admits a pseudocontinuation into $\dE$ if there exists a function $h^{\#}\in\cN\cM(\dE)$ such that
the boundary functions $\underline{h}$ and $\underline{h^{\#}}$ coincide $m$-a.e. on $\dT$. From
Remark~\ref{r2.2} it follows that a function $h\in\cN\cM(\dD)$ admits at most one pseudocontinuation into $\dE$.
Note that if $h\in\cN\cM(\dD)$ admits a pseudocontinuation $h^{\#}$ into $\dE$ and if
$h$ is analytically continuable through some open arc of $\dT$, then the analytic continuation
coincides with $h^{\#}$. In the following, the notation $\Pi (\mathbb D)$ stands for the set of all
functions belonging to $\cN\cM(\dD)$ which admit a pseudocontinuation into $\dE$.
Obviously, the restriction of a rational function onto $\dD$ belongs to $\Pi (\mathbb D)$.
If $h\in\Pi (\mathbb D)$, then the symbol $h^{\#}$ will be used to denote the pseudocontinuation of 
$h$ into $\dE$.

Based on Remark~\ref{r2.1} one can draw the following conclusion.

\begin{rem} \label{r2.10}
 If $h\in\Pi (\mathbb D)$, then $(h^{\#})^{\wh}\in\cN\cM(\dD)$ and $\underline{(h^{\#})^{\wh}}=\underline{h}^{\ast}$
 $m$-a.e. on $\dT$. Moreover, if $h\in\cN\cM(\dD)$ such that there is some $H\in\cN\cM(\dD)$ fulfilling the relation
 $\underline{H}=\underline{h}^{\ast}$ $m$-a.e. on $\dT$, then $h\in\Pi (\mathbb D)$ and $h^{\#}=H^{\wh}$. 
\end{rem}

Remark~\ref {r2.10} shows, in particular, that a function $h\in\cN\cM(\dD)$ belongs to $\Pi (\mathbb D)$ if and only if
there exists some $H\in\cN\cM(\dD)$ fulfilling $\underline{H}=\underline{h}^{\ast}$ $m$-a.e. on $\dT$. Thus, the
pseudocontinuability of $h$ is determined by $\underline{h}^{\ast}$.

The following properties of the class $\Pi (\mathbb D)$ can be easily checked.

\begin{rem} \label{r2.11}
 Let $g,h\in\Pi (\mathbb D)$. If $\alpha,\beta\in\dC$, then $\alpha g + \beta h \in \Pi (\mathbb D)$.
 Moreover, the function $gh$ and, in the case that $h$ does not not vanish identically in $\dD$, also
 $h^{-1}$ and $gh^{-1}$ belong to $\Pi (\mathbb D)$.
\end{rem}

A particular situation is met in the class $\cI (\mathbb D)$ of inner functions belonging to $\cS (\mathbb D)$. 
In this case we not only have pseudocontinuability, but can write down the 
corresponding pseudocontinuation, explicitly.

\begin{rem} \label{r2.12}
 The inclusion $\cI (\mathbb D)\subseteq \Pi (\mathbb D)$ holds. Moreover, for some $I\in\cI (\mathbb D)$ 
 the function $I^{\wh}$ does not vanish identically in $\dE$ and $I^{\#}=(I^{\wh\,})^{-1}$. 
\end{rem}

A combination of Remarks \ref{r2.11} and \ref{r2.12} yields the following.

\begin{rem} \label{r2.13}
 Let $h\in\cN\cM(\dD)$ and let the outer function $E$ in $\cN(\dD)$ be chosen according to Proposition \ref{p2.8}.
 Then $h\in\Pi (\mathbb D)$ if and only if $E\in\Pi (\mathbb D)$.
\end{rem}

Remark~\ref{r2.13} indicates the importance of the study of pseudocontinuability of outer functions in $\cN(\dD)$.
This phenomenon can be characterized in the following way in terms of inner functions belonging to $\cS(\mathbb D)$.

\begin{prop} \label{p2.14}
 Let $D$ be an outer function in $\cN(\dD)$. Then $D\in\Pi (\mathbb D)$ if and only if there exist functions
 $I_1,I_2\in\cI(\mathbb D)$ such that the identity
\begin{equation} \label{Nr.p1}
  \underline{D} \cdot (\underline{D}^{\ast})^{-1} =  \underline{I_2} \cdot \underline{I_1}^{-1} 
\end{equation}
 is satisfied $m$-a.e. on $\dT$.
\end{prop}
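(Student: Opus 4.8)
The plan is to reduce both implications to the characterization recorded in Remark~\ref{r2.10}: an outer function $D$ in $\cN(\dD)$ belongs to $\Pi(\dD)$ if and only if there exists some $H\in\cN\cM(\dD)$ with $\underline{H}=\underline{D}^{\ast}$ $m$-a.e.\ on $\dT$. Granting this equivalence, the identity \eqref{Nr.p1} is, after rearrangement, just the assertion that the boundary ratio $\underline{D}(\underline{D}^{\ast})^{-1}$ is a quotient of boundary functions of two inner functions. So the whole task is to pass back and forth between such an $H$ and a pair $I_1,I_2\in\cI(\dD)$.

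For the easy (sufficiency) direction I would assume \eqref{Nr.p1} and simply exhibit $H$. Rewriting \eqref{Nr.p1} as $\underline{D}^{\ast}=\underline{D}\,\underline{I_1}\,\underline{I_2}^{-1}$ $m$-a.e., I set $H:=D\cdot I_1\cdot I_2^{-1}$. Since $D\in\cN(\dD)$ and $I_1,I_2\in\cI(\dD)\subseteq\cN\cM(\dD)$ with $I_2\not\equiv 0$, the function $H$ lies in the algebra $\cN\cM(\dD)$; multiplicativity of boundary values together with Remark~\ref{r2.2} (which guarantees $\underline{I_2}\not=0$ $m$-a.e.) gives $\underline{H}=\underline{D}^{\ast}$ $m$-a.e. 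An appeal to the second part of Remark~\ref{r2.10} then yields $D\in\Pi(\dD)$.

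The substantive (necessity) direction is where I expect the real work. Assuming $D\in\Pi(\dD)$, the first part of Remark~\ref{r2.10} (applied to $H:=(D^{\#})^{\wh}$) furnishes some $H\in\cN\cM(\dD)$ with $\underline{H}=\underline{D}^{\ast}$ $m$-a.e. I would then form the quotient $g:=D/H$, which again lies in $\cN\cM(\dD)$ because $H\not\equiv 0$. Its boundary function is unimodular, since $|\underline{g}|=|\underline{D}|/|\underline{H}|=|\underline{D}|/|\underline{D}^{\ast}|=1$ $m$-a.e. Applying the Smirnov factorization of Proposition~\ref{p2.8} writes $g=E\cdot I_1\cdot I_2^{-1}$ with $E$ an outer function in $\cN(\dD)$ and $I_1,I_2\in\cI(\dD)$.

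The crux is to identify the outer factor. Because $|\underline{I_1}|=|\underline{I_2}|=1$ $m$-a.e., the unimodularity of $\underline{g}$ forces $|\underline{E}|=1$ $m$-a.e.; by the integral representation defining outer functions (with $\ln|\underline{E}|\equiv 0$) this makes $E$ a unimodular constant $\alpha\in\dT$, equivalently Remark~\ref{r2.9}. Passing to boundary values gives $\underline{D}(\underline{D}^{\ast})^{-1}=\underline{g}=\alpha\,\underline{I_1}\,\underline{I_2}^{-1}$ $m$-a.e. Absorbing $\alpha$ into the numerator inner function (a unimodular constant multiple of an inner function is again inner) and interchanging the roles of the two inner factors, i.e.\ setting $I_2':=\alpha I_1$ and $I_1':=I_2$, delivers \eqref{Nr.p1}. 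The only points demanding care are the membership of the quotient $g$ in $\cN\cM(\dD)$ and the justification that an outer function with unimodular boundary values must be constant; both are routine consequences of the material assembled above.
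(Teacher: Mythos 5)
Your proof is correct and follows essentially the same route as the paper: Remark~\ref{r2.10} in both directions (with the same choice $H:=(D^{\#})^{\wh}$ for necessity, and $H:=D I_1 I_2^{-1}$ for sufficiency), plus the Smirnov factorization of Proposition~\ref{p2.8} to extract the inner quotient. The only cosmetic difference is that you apply the factorization directly to the unimodular-boundary quotient $D/H$ and identify its outer factor as a unimodular constant via the defining integral representation, whereas the paper factors $H$ itself and pins down the outer factor of $D E^{-1}$ through Smirnov's maximum modulus principle together with Remark~\ref{r2.9}; both arguments amount to the same identification.
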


\begin{proof}
Let $D\in\Pi (\mathbb D)$. Then the pseudocontinuation $D^{\#}$ of $D$ into $\dE$ is well-defined 
and we set $H:=(D^{\#})^{\wh}$. From Remark~\ref{r2.10} we get $H\in\cN\cM(\dD)$ and
\begin{equation} \label{Nr.p2}
 \underline{H} = \underline{D}^{\ast} .
\end{equation}
Because of $H\in\cN\cM(\dD)$ the canonical factorization theorem of Smirnov (see Proposition~\ref{p2.8}) 
provides us with the existence of an outer function $E$ in $\cN(\dD)$ and two functions $J_1,J_2\in\cI(\mathbb D)$ such that
\begin{equation} \label{Nr.p3}
  H = E \cdot J_1 \cdot J_2^{-1} . 
\end{equation}
In view of Remark~\ref{r2.3} the function $E$ vanishes nowhere in $\dD$ and $F:=D E^{-1}$ is an outer
function in $\cN(\dD)$. Because of \eqref{Nr.p3} we obtain
\[ F = D \cdot H^{-1} \cdot J_1 \cdot J_2^{-1} . \]
Thus, using \eqref{Nr.p2} we infer
\begin{equation} \label{Nr.p4}
 \underline{F} = \underline{D} \cdot (\underline{D}^{\ast})^{-1} \cdot \underline{J_1} \cdot \underline{J_2}^{-1}  
\end{equation}
$m$-a.e. on $\dT$. Since $\underline{J_1}$ and $\underline{J_2}$ are both unimodular $m$-a.e. on $\dT$ from  
\eqref{Nr.p4} one can see that $\underline{F}$ is unimodular $m$-a.e. on $\dT$. Furthermore, since we already know
that $F$ is an outer function in $\cN(\dD)$ the maximum modulus principle of Smirnov 
(see, e.g., Duren \cite[Theorem~2.11 on page 25]{Du})
implies that $F$ belongs to $\cI(\mathbb D)$ as well. Hence, Remark~\ref{r2.9} shows that $F$ is a constant function with
unimodular value. Consequently, the settings $I_2:=F J_2$ and $I_1:=J_1$ yield functions belonging to 
$\cI(\mathbb D)$ such that \eqref{Nr.p1} holds $m$-a.e. on $\dT$.
Conversely, we assume that there exist functions $I_1,I_2\in\cI(\mathbb D)$ such that \eqref{Nr.p1} 
is satisfied $m$-a.e. on $\dT$. Let $H:=D I_1 I_2^{-1}$. Then $H\in\cN\cM(\dD)$ and in view of \eqref{Nr.p1} we get
\[ \underline{H} = \underline{D}^{\ast} . \]
This leads, in combination with Remark~\ref{r2.10}, to $D\in\Pi (\mathbb D)$.
\end{proof}

It should be mentioned that in the above proof of the fact that the validity of \eqref{Nr.p1} $m$-a.e. on $\dT$
implies $D\in\Pi (\mathbb D)$ we do not use the assumption that $D$ is an outer function in $\cN(\dD)$.


\section{A function--theoretic approach to study the pseudo\-continuability in $\cS(\mathbb D)\setminus\cI(\mathbb D)$}
\label{s3}


In the present section we investigate the pseudocontinuability of a non-inner function $\Theta\in\cS(\mathbb D)$ in terms
of the function $\Phi \in \cC^0 (\mathbb D)$ and the measure $\mu\in\cM_+^1 (\mathbb T)$ which is generated by 
$\Theta$ as explained in Section~\ref{s1}.  
The following result provides us with further insight into the connection between functions $\Theta$ and $\Phi$.

\begin{prop} \label{p3.1}
 Let $\Theta\in\cS(\mathbb D)$ and let $\Phi \in \cC^0 (\mathbb D)$ be defined via \eqref{Nr.0.1}.
 Then $\Theta\in\Pi (\mathbb D)$ if and only if $\Phi\in\Pi (\mathbb D)$.
\end{prop}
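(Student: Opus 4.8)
The plan is to read off the equivalence directly from the explicit birational link between $\Theta$ and $\Phi$ recorded in \eqref{Nr.0.1} and \eqref{Nr.0.1f}, using that the class $\Pi(\dD)$ behaves like a field: it is stable under sums, products, and inverses of functions not vanishing identically (Remark~\ref{r2.11}). The key preliminary observation is that the identity function $\zeta\mapsto\zeta$, being the restriction to $\dD$ of a rational function (indeed an inner function), belongs to $\Pi(\dD)$, and that every constant function does too. Hence all the building blocks $1$ and $\zeta$ that appear in \eqref{Nr.0.1} and \eqref{Nr.0.1f} already lie in $\Pi(\dD)$, and the whole argument reduces to checking that the relevant denominators may be inverted within $\Pi(\dD)$.

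For the direction $\Theta\in\Pi(\dD)\Rightarrow\Phi\in\Pi(\dD)$, I would first invoke Remark~\ref{r2.11} to obtain $\zeta\Theta\in\Pi(\dD)$, and therefore $1+\zeta\Theta\in\Pi(\dD)$ and $1-\zeta\Theta\in\Pi(\dD)$ as $\dC$-linear combinations of members of $\Pi(\dD)$. Since $\Theta\in\cS(\dD)$ forces $|\zeta\Theta(\zeta)|\le|\zeta|<1$ on $\dD$, the denominator $1-\zeta\Theta$ is zero-free on $\dD$ and, in particular, does not vanish identically. Remark~\ref{r2.11} then supplies $(1-\zeta\Theta)^{-1}\in\Pi(\dD)$, and \eqref{Nr.0.1} exhibits $\Phi$ as a product of two elements of $\Pi(\dD)$, whence $\Phi\in\Pi(\dD)$.

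For the converse, I would use that $\Phi+1$ is zero-free on $\dD$ (by Remark~\ref{r2.4b}, or directly because ${\rm Re}\,[\Phi]>0$ forces ${\rm Re}\,[\Phi+1]\ge 1$), so in particular $\Phi+1$ does not vanish identically and $(\Phi+1)^{-1}\in\Pi(\dD)$ by Remark~\ref{r2.11}. Combined with $\Phi-1\in\Pi(\dD)$ and identity \eqref{Nr.0.1f}, this yields $\zeta\Theta\in\Pi(\dD)$; dividing by the identity function, which does not vanish identically, a final application of Remark~\ref{r2.11} gives $\Theta=(\zeta\Theta)/\zeta\in\Pi(\dD)$. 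The only point that requires care -- the ``obstacle'' such as it is -- is verifying the non-vanishing of the two denominators $1-\zeta\Theta$ and $\Phi+1$ so that the inversion clause of Remark~\ref{r2.11} is applicable; both facts are immediate from the defining inequalities $|\zeta\Theta(\zeta)|<1$ and ${\rm Re}\,[\Phi(\zeta)]>0$ on $\dD$.
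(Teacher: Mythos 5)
Your proposal is correct and follows exactly the route of the paper's (one-line) proof: apply the algebraic closure properties of $\Pi(\mathbb D)$ from Remark~\ref{r2.11} to the explicit formulas \eqref{Nr.0.1} and \eqref{Nr.0.1f}. Your added verifications --- that $\zeta\in\Pi(\mathbb D)$, that $1-\zeta\Theta$ and $\Phi+1$ are zero-free on $\mathbb D$ so the inversion clause applies, and the final division of $\zeta\Theta$ by $\zeta$ --- are precisely the details the paper leaves implicit.
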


\begin{proof}
Taking into account \eqref{Nr.0.1} and \eqref{Nr.0.1f} an application of Remark~\ref{r2.11} yields the asserted equivalence.
\end{proof}

The rest of this section is devoted to the study of pseudocontinuability of a non-inner function $\Theta\in\cS(\mathbb D)$ 
in terms of its associated measure $\mu\in\cM_+^1 (\mathbb T)$.
Our approach to that question will be based on an analysis of various aspects of the boundary behavior of $\Theta$. 
In particular, the $m$-inte\-gra\-bi\-li\-ty of the function $\ln [1-|\underline{\Theta}|^2]$ will be of importance.

\begin{prop}\label{p3.2}
 Let $\Theta\in\cS(\mathbb D)\cap\Pi (\mathbb D)$ and let $h:=1-\Theta(\Theta^{\#})^{\wh}$. Then:
\begin{enumerate}
\item[(a)] 
 The function $h$ belongs to $\cN(\dD)$ and satisfies $\underline{h} = 1-|\underline{\Theta}|^2$ 
 $m$-a.e. on $\dT$.
\item[(b)] 
 If $\Theta\in\cS(\mathbb D) \setminus \cI(\mathbb D)$, then the function $\ln_{\!}\underline{h}$ is
 $m$-integrable.
\end{enumerate}
\end{prop}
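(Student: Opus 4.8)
The plan is to establish the two parts separately, in both cases reducing everything to the boundary--value calculus for the Nevanlinna classes collected in Section~\ref{s2}.

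For part~(a) I would first record that $(\Theta^{\#})^{\wh}$ is a well--defined companion of $\Theta$ inside the unit disk. Since $\Theta\in\cS(\dD)\cap\Pi(\dD)$, Remark~\ref{r2.10} applies and yields $(\Theta^{\#})^{\wh}\in\cN\cM(\dD)$ together with the boundary identity $\underline{(\Theta^{\#})^{\wh}}=\underline{\Theta}^{\ast}$ $m$-a.e.\ on $\dT$. As $\Theta$ itself belongs to $\cS(\dD)\subseteq\cN\cM(\dD)$ and $\cN\cM(\dD)$ is an algebra over $\dC$, the product $\Theta\cdot(\Theta^{\#})^{\wh}$, and hence $h=1-\Theta(\Theta^{\#})^{\wh}$, lies in $\cN\cM(\dD)$. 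Multiplying the finite nontangential boundary limits, which exist $m$-a.e.\ for functions of $\cN\cM(\dD)$, then gives
\[
 \underline{h}=1-\underline{\Theta}\cdot\underline{(\Theta^{\#})^{\wh}}=1-\underline{\Theta}\,\underline{\Theta}^{\ast}=1-|\underline{\Theta}|^2
\]
$m$-a.e.\ on $\dT$, which is the content of~(a).

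For part~(b) the idea is to feed the two outputs of~(a) into Remark~\ref{r2.2}. First I would verify that $h$ is not the zero function: if $\Theta\in\cS(\dD)\setminus\cI(\dD)$, then $|\underline{\Theta}|<1$ on a subset of $\dT$ of positive $m$-measure, so $\underline{h}=1-|\underline{\Theta}|^2>0$ there, whereas a function vanishing identically in $\dD$ would have vanishing boundary function. Applying Remark~\ref{r2.2} to $h\in\cN\cM(\dD)$ therefore shows that $\ln|\underline{h}|$ is $m$-integrable and that $m(\{\underline{h}=0\})=0$. Finally, because $\Theta$ is a Schur function we have $|\underline{\Theta}|\le 1$, hence $\underline{h}=1-|\underline{\Theta}|^2\ge 0$ $m$-a.e.; combined with $\underline{h}\neq 0$ $m$-a.e.\ this forces $\underline{h}>0$ $m$-a.e., so that $\underline{h}=|\underline{h}|$ and $\ln\underline{h}=\ln|\underline{h}|$ is $m$-integrable, as claimed.

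I expect no serious obstacle beyond careful bookkeeping. The one technical point in~(a) is the passage from the interior product to the product of boundary functions; this is legitimate because the poles of $(\Theta^{\#})^{\wh}$ form a discrete subset of $\dD$ and hence do not influence the a.e.\ boundary behaviour. The genuine conceptual content sits in~(b): it is precisely the membership of $h$ in $\cN\cM(\dD)$ (produced by the pseudocontinuability of $\Theta$ through the companion $(\Theta^{\#})^{\wh}$), rather than the mere pointwise identity $\underline{h}=1-|\underline{\Theta}|^2$, that, via Remark~\ref{r2.2}, converts non--innerness into the Szeg\H{o}--type integrability $\int_{\dT}\ln[1-|\underline{\Theta}|^2]\,m(dt)>-\infty$.
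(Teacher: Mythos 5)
Your argument is, in substance, the same as the paper's own proof: for (a) the paper gives a one\--line appeal to ``elementary properties of the class $\cN\cM(\dD)$'' together with Remark~\ref{r2.10}, which is precisely your combination of Remark~\ref{r2.10}, the algebra property of $\cN\cM(\dD)$, and the multiplicativity of a.e.\ boundary values; for (b) the paper argues exactly as you do (non\--innerness forces $\underline{h}\neq 0$ on a set of positive $m$\--measure, hence $h\not\equiv 0$ in $\dD$, and Remark~\ref{r2.2} finishes, with positivity of $\underline{h}$ turning $\ln|\underline{h}|$ into $\ln \underline{h}$).

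There is one point where your conclusion and the proposition as stated diverge, and it deserves to be made explicit: assertion (a) claims $h\in\cN(\dD)$, i.e.\ that $h$ is \emph{holomorphic} in $\dD$, whereas your proof (like the paper's) only delivers $h\in\cN\cM(\dD)$. You cannot do better, because the stronger claim is false in general: the poles of $(\Theta^{\#})^{\wh}$ in $\dD$ need not be cancelled by zeros of $\Theta$. Indeed, take $\Theta(\zeta):=\frac{2\zeta-1}{4}$, a rational non\--inner Schur function, so $\Theta\in\cS(\dD)\cap\Pi(\dD)$ with $\Theta^{\#}(w)=\frac{2w-1}{4}$, which has a pole at $\infty\in\dE$. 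Then $(\Theta^{\#})^{\wh}(\zeta)=\frac{2-\zeta}{4\zeta}$ has a simple pole at $\zeta=0$, and since $\Theta(0)=-\frac{1}{4}\neq 0$ the function $h=1-\Theta(\Theta^{\#})^{\wh}$ has a simple pole at $0$; hence $h\in\cN\cM(\dD)\setminus\cN(\dD)$. So your more cautious conclusion is in fact the correct form of (a), and it is all that is ever needed: Remark~\ref{r2.2} is stated for $\cN\cM$, so part (b) goes through unchanged, and the later applications (Proposition~\ref{p3.4}, Theorem~\ref{t3.9}) use only the $m$\--integrability of $\ln[1-|\underline{\Theta}|^2]$. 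Your closing observation that the poles of $(\Theta^{\#})^{\wh}$, being a discrete subset of $\dD$, do not disturb the a.e.\ boundary identity is exactly the right remark; the identity $\underline{h}=1-|\underline{\Theta}|^2$ and assertion (b) are unaffected.
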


\begin{proof}
Using some elementary properties on the class $\cN\cM(\dD)$ with a view to Remark~\ref{r2.10}, assertion
(a) follows. Now let $\Theta\in\cS(\mathbb D) \setminus \cI(\mathbb D)$. Because of (a) one can see that the function
$\underline{h}$ does not vanish $m$-a.e. on $\dT$. Therefore, the function $h$ does not vanish identically in $\dD$.
Consequently, Remark~\ref{r2.2} provides finally assertion (b). 
\end{proof}

Let $\Theta\in\cS(\mathbb D)$ be embedded in the triple $[\Theta,\Phi,\mu]$ as in Section~\ref{s1}.
Motivated by Proposition~\ref{p3.2} our next considerations are aimed at characterizing the $m$-integra\-bi\-li\-ty
of the function $\ln [1-|\underline{\Theta}|^2]$ in terms of the function $\Phi$ or the measure $\mu$.

\begin{rem} \label{r3.3}
 Let $\Theta\in\cS(\mathbb D)$ and let $\Phi \in \cC^0 (\mathbb D)$ be defined by \eqref{Nr.0.1}.
 Then a straightforward calculation yields that the identity
\[ 1-|\underline{\Theta}(t)|^2 = \frac{4{\rm Re}\,[\underline{\Phi}(t)]}{|\underline{\Phi}(t)+1|^2} \]
 holds $m$-a.e. on $\dT$.
\end{rem}

\begin{prop} \label{p3.4}
 Let $\Theta\in\cS(\mathbb D)$ and let $\Phi \in \cC^0 (\mathbb D)$ be defined by \eqref{Nr.0.1}.
 Then the function $\ln [1-|\underline{\Theta}|^2]$ is $m$-integrable if and only if 
 $\ln [{\rm Re}\,\underline{\Phi}]$ is $m$-integrable.
\end{prop}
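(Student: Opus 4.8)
The plan is to reduce the claimed equivalence to a single integrability statement about a correction term, using the pointwise identity already recorded in Remark~\ref{r3.3}. That identity expresses $1-|\underline{\Theta}|^2$ as the quotient $4\,{\rm Re}\,[\underline{\Phi}]/|\underline{\Phi}+1|^2$, so taking logarithms converts the quotient into a sum and isolates exactly the difference between the two functions whose integrability we wish to compare.

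First I would record, from Remark~\ref{r3.3}, that the identity
\[
 \ln [1-|\underline{\Theta}|^2] - \ln [{\rm Re}\,\underline{\Phi}]
 = \ln 4 - 2\ln |\underline{\Phi}+1|
\]
holds $m$-a.e.\ on $\dT$. Consequently, the two functions $\ln [1-|\underline{\Theta}|^2]$ and $\ln [{\rm Re}\,\underline{\Phi}]$ differ, up to the additive constant $\ln 4$, by the term $-2\ln |\underline{\Phi}+1|$; hence they are simultaneously $m$-integrable as soon as this correction term is shown to be $m$-integrable on its own.

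Next I would establish the $m$-integrability of $\ln |\underline{\Phi}+1|$, which is the real substance of the argument. By Remark~\ref{r2.4b}, the function $\Phi+1$ belongs to $\cC(\dD)$, vanishes nowhere in $\dD$, and is an outer function in $\cN(\dD)$; in particular it does not vanish identically, since $\Phi(0)+1 = 2$ by \eqref{Nr.0.1c}. Therefore Remark~\ref{r2.2}, applied to $\Phi+1 \in \cN\cM(\dD)$, yields both that $m(\{\underline{\Phi}+1 = 0\}) = 0$ and that $\ln |\underline{\Phi}+1|$ is $m$-integrable. Combining this with the displayed identity closes the proof: the difference $\ln [1-|\underline{\Theta}|^2] - \ln [{\rm Re}\,\underline{\Phi}]$ equals an $m$-integrable function $m$-a.e., so one side is $m$-integrable precisely when the other is.

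The \emph{hard part} is the integrability of $\ln |\underline{\Phi}+1|$, and it is precisely here that the outer-function machinery of Section~\ref{s2} does the work; the remainder is bookkeeping with the logarithmic identity. The one subtlety to handle with care is that the individual summands can take the value $-\infty$ on the set where ${\rm Re}\,\underline{\Phi}$ vanishes: since $|\underline{\Phi}+1| > 0$ $m$-a.e., Remark~\ref{r3.3} shows that $1-|\underline{\Theta}|^2$ and ${\rm Re}\,\underline{\Phi}$ vanish on the same null-or-positive set, so I would phrase the conclusion through the integrability of the \emph{difference} of the two target functions rather than asserting integrability of each piece separately, ensuring the equivalence remains valid even in the degenerate case where both functions fail to be integrable.
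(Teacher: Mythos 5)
Your proposal is correct and follows essentially the same route as the paper's own proof: both rest on the logarithmic identity from Remark~\ref{r3.3} and on the $m$-integrability of $\ln|\underline{\Phi}+1|$, which the paper obtains via Remark~\ref{r2.4b} together with Remarks~\ref{r2.3} and~\ref{r2.2}, while you invoke Remark~\ref{r2.2} directly after noting $\Phi(0)+1=2$ ensures $\Phi+1$ does not vanish identically. Your extra care about the $-\infty$ values (phrasing the conclusion through the integrable difference) is a sound refinement of the same argument, not a different one.
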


\begin{proof}
Because of Remark~\ref{r3.3} we get
\begin{equation} \label{Nr.31}
 \ln [1-|\underline{\Theta}|^2]
 = \ln 4 + \ln [{\rm Re}\,\underline{\Phi}] - 2 \ln_{\!}|\underline{\Phi}+1| 
\end{equation}
$m$-a.e. on $\dT$. 
In view of Remark~\ref{r2.4b} we know that $\Phi+1$ is an outer function in $\cN(\mathbb D)$. Thus, 
Remark~\ref{r2.3} implies in combination with Remark~\ref{r2.2} that 
$\ln_{\!}|\underline{\Phi}+1|$
is $m$-integrable. Hence, the assertion is an immediate consequence of \eqref{Nr.31}.
\end{proof}

Following D.Z. Arov we denote by $\cS\Pi(\mathbb D)$ the subclass of pseudocontinuable functions
of $\cS(\mathbb D)$.

\begin{cor} \label{c3.5}
 Let $\Theta\in \cS\Pi(\mathbb D)\setminus \cI(\mathbb D)$ and let 
 $\Phi \in \cC^0 (\mathbb D)$ be defined by \eqref{Nr.0.1}.
 Then the function $\ln [{\rm Re}\,\underline{\Phi}]$ is $m$-integrable.
\end{cor}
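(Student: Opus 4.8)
The plan is to chain together the two immediately preceding results, Proposition~\ref{p3.2} and Proposition~\ref{p3.4}, since the hypotheses of the corollary are tailored exactly to feed into both. No new machinery is needed; the statement is a corollary in the literal sense.

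First I would unwind the hypothesis $\Theta\in\cS\Pi(\mathbb D)\setminus\cI(\mathbb D)$. By the definition of $\cS\Pi(\mathbb D)$ as the subclass of pseudocontinuable functions of $\cS(\mathbb D)$, this means precisely that $\Theta\in\cS(\mathbb D)\cap\Pi(\mathbb D)$ and $\Theta\notin\cI(\mathbb D)$, which is exactly the setting of Proposition~\ref{p3.2}. Introducing the auxiliary function $h:=1-\Theta(\Theta^{\#})^{\wh}$, part~(a) of that proposition supplies $\underline{h}=1-|\underline{\Theta}|^2$ $m$-a.e.\ on $\dT$, while part~(b) supplies that $\ln\underline{h}$ is $m$-integrable. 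Combining these two facts identifies the integrand and yields that $\ln[1-|\underline{\Theta}|^2]$ is $m$-integrable.

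Second, I would invoke Proposition~\ref{p3.4}, which asserts that $\ln[1-|\underline{\Theta}|^2]$ is $m$-integrable if and only if $\ln[{\rm Re}\,\underline{\Phi}]$ is $m$-integrable. Since the left-hand condition has just been established in the previous step, the $m$-integrability of $\ln[{\rm Re}\,\underline{\Phi}]$ follows at once, and this is the assertion of the corollary.

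I do not expect a genuine obstacle here, as the real content resides in the two propositions on which the argument rests. The only point worth double-checking is the bookkeeping in the first step: one must explicitly use the identification $\underline{h}=1-|\underline{\Theta}|^2$ from Proposition~\ref{p3.2}(a) so that the integrability statement of part~(b) becomes a statement about $\ln[1-|\underline{\Theta}|^2]$, which is precisely the input required by Proposition~\ref{p3.4}.
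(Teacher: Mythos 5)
Your proposal is correct and follows exactly the paper's own argument, which reads: ``Combine part~(b) of Proposition~\ref{p3.2} with Proposition~\ref{p3.4}.'' Your additional bookkeeping---using part~(a) of Proposition~\ref{p3.2} to identify $\underline{h}$ with $1-|\underline{\Theta}|^2$ so that part~(b) becomes the hypothesis of Proposition~\ref{p3.4}---is precisely what the paper's one-line proof leaves implicit.
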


\begin{proof}
Combine part~(b) of Proposition~\ref{p3.2} with Proposition~\ref{p3.4}.
\end{proof}

Now we start from a function $\Phi \in \cC (\mathbb D)$ and look to characterize the 
$m$-integrability of $\ln [{\rm Re}\,\underline{\Phi}]$ in terms of 
its Riesz-Herglotz measure $\mu$ associated with $\Phi$, subject to \eqref{Nr.0.3}.
In order to realize this goal we will apply the following result which is an immediate consequence 
of a theorem due to Fatou on the boundary behavior of Poisson integrals 
(see, e.g., Rosenblum/Rovnyak \cite[Theorem~1.18]{RR}).

\begin{prop} \label{p3.6}
 Let $\Phi \in \cC (\mathbb D)$ and let $\mu\in\cM_{+}(\dT)$ be associated with $\Phi$ by the 
 Riesz-Herglotz Theorem, i.e. via \eqref{Nr.0.3} with $\beta={\rm Im}\,[\Phi (0)]$.
 Furthermore, let the Lebesgue decomposition of $\mu$ with respect to $m$ be given by 
\begin{equation}\label{Nr.3.4}
  \mu (dt) = w(t) m(dt) + \mu_s (dt) ,
\end{equation}
 where $\mu_s$ stands for the singular part of $\mu$ with respect to $m$. Then the relation
 ${\rm Re}\,\underline{\Phi}=w$ holds $m$-a.e. on $\dT$.
\end{prop}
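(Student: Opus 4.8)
The plan is to recognize $\mathrm{Re}\,\Phi$ as the Poisson integral of the measure $\mu$ and then to invoke the Fatou theorem on the boundary behavior of Poisson integrals that was quoted before the statement.

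First I would take real parts in the Riesz--Herglotz representation \eqref{Nr.0.3}. Since $\beta\in\mathbb R$, the summand $i\beta$ does not contribute, so for $\zeta=re^{i\varphi}$ with $r\in[0,1)$ one has
\[
 \mathrm{Re}\,\Phi(re^{i\varphi})
 = \int_{\dT} \mathrm{Re}\,\frac{t+re^{i\varphi}}{t-re^{i\varphi}}\,\mu(dt).
\]
The next step is the elementary identity $\mathrm{Re}\,\frac{t+\zeta}{t-\zeta}=\frac{1-|\zeta|^2}{|t-\zeta|^2}$, which for $t=e^{i\theta}$ exhibits the integrand as the Poisson kernel $\frac{1-r^2}{1-2r\cos(\theta-\varphi)+r^2}$. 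Consequently $\mathrm{Re}\,\Phi$ is exactly the Poisson integral of the finite nonnegative measure $\mu$.

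Now I would apply the cited Fatou theorem (Rosenblum/Rovnyak \cite[Theorem~1.18]{RR}) to $\mu$. With the Lebesgue decomposition \eqref{Nr.3.4}, that theorem yields that the radial limit $\lim_{r\to 1-0}\mathrm{Re}\,\Phi(rt)$ exists and equals the density $w(t)$ for $m$-a.e.\ $t\in\dT$.

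Finally I would reconcile this with the boundary function $\underline{\Phi}$. Since $\Phi\in\cC(\dD)\subseteq\cN(\dD)\subseteq\cN\cM(\dD)$, the Fatou theorem for the Nevanlinna class (recalled in Section~\ref{s2}) gives that $\lim_{r\to 1-0}\Phi(rt)=\underline{\Phi}(t)$ exists for $m$-a.e.\ $t$. For every such $t$ the continuity of $z\mapsto\mathrm{Re}\,z$ forces $\lim_{r\to 1-0}\mathrm{Re}\,\Phi(rt)=\mathrm{Re}\,\underline{\Phi}(t)$, and comparison with the previous step gives $\mathrm{Re}\,\underline{\Phi}(t)=w(t)$ for $m$-a.e.\ $t$, as asserted. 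I expect the only genuine point to watch is this last coordination of the two almost-everywhere statements: the radial limit of $\Phi$ and the radial limit of its Poisson integral may each fail on an $m$-null set, but the desired identity holds off the union of these two null sets, so the reconciliation is harmless. Beyond that the argument is entirely a matter of identifying the Poisson kernel and citing Fatou's theorem, which is why the result is indeed an immediate consequence.
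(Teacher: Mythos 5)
Your proposal is correct and is exactly the argument the paper has in mind: the paper gives no separate proof, stating the proposition as an immediate consequence of Fatou's theorem on Poisson integrals (Rosenblum/Rovnyak, Theorem~1.18), which is precisely your route of taking real parts in \eqref{Nr.0.3}, identifying the Poisson kernel, and applying Fatou. Your extra care in merging the two $m$-null exceptional sets (for the radial limit of $\Phi$ and for the Fatou limit of the Poisson integral) is a sound, harmless elaboration of what the paper leaves implicit.
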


Proposition~\ref{p3.6} leads us to a particular subclass of 
$\cM_{+}(\dT)$.

Let $\mu\in\cM_{+}(\dT)$ and let the Lebesgue decomposition of $\mu$ with respect to $m$ be given by
\eqref{Nr.3.4}, where $\mu_s$ stands for the singular part of $\mu$ with respect to $m$.
Then $\mu$ is said to satisfy the Szeg\H{o} condition if the function $\ln w$ is $m$-integrable
or equivalently if \eqref{Nr.3.4a} holds. In this case, the Szeg\H{o} function 
$D:\mathbb D\to\mathbb C$ associated with $w$ is given by \eqref{Nr.3.5}.
If we start from a measure $\mu\in\cM_{+}(\dT)$ which satisfies the Szeg\H{o} condition, we 
will call $D$ also the Szeg\H{o} function associated with $\mu$. 

An application of Proposition~\ref{p3.6} immediately provides the following result. 

\begin{cor} \label{c3.7}
 Let $\Phi\in\cC (\mathbb D)$ and let $\mu\in\cM_{+}(\dT)$ be associated with $\Phi$ by the 
 Riesz-Herglotz theorem via \eqref{Nr.0.3} with $\beta={\rm Im}\,[\Phi (0)]$. 
 Then the function $\ln [{\rm Re}\,\underline{\Phi}]$ is $m$-integrable if and only if 
 the measure $\mu$ satisfies the Szeg\H{o} condition.
\end{cor}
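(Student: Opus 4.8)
The plan is to reduce everything to Proposition~\ref{p3.6}, which already identifies the boundary real part of $\Phi$ with the Radon--Nikodym density of the absolutely continuous part of $\mu$. Concretely, writing the Lebesgue decomposition $\mu(dt) = w(t)\,m(dt) + \mu_s(dt)$ as in \eqref{Nr.3.4}, Proposition~\ref{p3.6} guarantees that ${\rm Re}\,\underline{\Phi} = w$ holds $m$-a.e. on $\dT$. First I would record this identity and observe that it passes to the logarithms: $\ln[{\rm Re}\,\underline{\Phi}] = \ln w$ $m$-a.e. on $\dT$. Since $m$-integrability of a function depends only on its $m$-a.e. equivalence class, the function $\ln[{\rm Re}\,\underline{\Phi}]$ is $m$-integrable if and only if $\ln w$ is $m$-integrable.

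It then remains to match this with the definition of the Szeg\H{o} condition. By definition, $\mu$ satisfies the Szeg\H{o} condition precisely when $\ln w$ is $m$-integrable (equivalently when \eqref{Nr.3.4a} holds), so the chain of equivalences is complete. I would also point out the one elementary fact that makes the two formulations of the Szeg\H{o} condition coincide: because $w$ is the density of the absolutely continuous part of the finite measure $\mu$, it is itself $m$-integrable, and the pointwise bound $\ln^+ w \le w$ shows that $\int_\dT \ln^+[w(t)]\,m(dt) < +\infty$ always holds. Consequently the only obstruction to $m$-integrability of $\ln w$ is the size of its negative part, i.e. the condition $\int_\dT \ln[w(t)]\,m(dt) > -\infty$; this condition also forces $w > 0$ $m$-a.e., so that $\ln w$ (and hence $\ln[{\rm Re}\,\underline{\Phi}]$) is genuinely finite $m$-a.e.

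There is essentially no hard step here: all the analytic substance---namely the boundary identification of ${\rm Re}\,\underline{\Phi}$ with $w$ via Fatou's theorem on Poisson integrals---has been packaged into Proposition~\ref{p3.6}. The remaining work is the bookkeeping of passing from an $m$-a.e. identity of nonnegative functions to the corresponding statement about integrability of their logarithms, together with unwinding the definition of the Szeg\H{o} condition. If anything required care, it would be checking that $\ln[{\rm Re}\,\underline{\Phi}]$ is well defined $m$-a.e.; but this is automatic once one notes that either integrability hypothesis already rules out $w$ vanishing on a set of positive $m$-measure.
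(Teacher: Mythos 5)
Your argument is correct and follows exactly the paper's route: the paper proves Corollary~\ref{c3.7} as an immediate consequence of Proposition~\ref{p3.6} (the identification ${\rm Re}\,\underline{\Phi}=w$ $m$-a.e.) combined with the definition of the Szeg\H{o} condition as $m$-integrability of $\ln w$. Your additional bookkeeping --- noting $\ln^{+\!}w\le w$ so that \eqref{Nr.3.4a} is equivalent to $m$-integrability of $\ln w$, and that either hypothesis forces $w>0$ $m$-a.e. --- merely spells out details the paper leaves implicit.
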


\begin{lem} \label{l3.8}
 Let $\Phi$ and $D$ be functions belonging to $\cN\cM (\dD)$ which do not vanish identically in $\dD$,
 where the relation ${\rm Re}\,\underline{\Phi}=|\underline{D}|^2$ is satisfied $m$-a.e. on $\dT$.
 Then $\Phi\in\Pi (\mathbb D)$ if and only if $D\in\Pi (\mathbb D)$.
\end{lem}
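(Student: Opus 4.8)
The plan is to reduce everything to the boundary-function criterion recorded in Remark~\ref{r2.10}, namely that a function $h\in\cN\cM(\dD)$ belongs to $\Pi(\dD)$ if and only if there is some $H\in\cN\cM(\dD)$ with $\underline{H}=\underline{h}^{\ast}$ $m$-a.e.\ on $\dT$. The algebraic input is the observation that, writing $\underline{\Phi}^{\ast}$ for the complex conjugate of the boundary function, the hypothesis ${\rm Re}\,\underline{\Phi}=|\underline{D}|^2$ reads $\frac{1}{2}(\underline{\Phi}+\underline{\Phi}^{\ast})=\underline{D}\,\underline{D}^{\ast}$ $m$-a.e.\ on $\dT$. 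Throughout I will use that $\cN\cM(\dD)$ is an algebra which is moreover closed under forming the inverse of any function not vanishing identically in $\dD$ (immediate from the description of $\cN\cM(\dD)$ as quotients of bounded holomorphic functions), together with the fact from Remark~\ref{r2.2} that $m(\{\underline{D}=0\})=0$, so that $\underline{D}^{-1}$ is defined $m$-a.e.\ and is the boundary function of $D^{-1}$.

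For the direction $\Phi\in\Pi(\dD)\Rightarrow D\in\Pi(\dD)$, I would first invoke Remark~\ref{r2.10} to obtain a function $G\in\cN\cM(\dD)$ with $\underline{G}=\underline{\Phi}^{\ast}$ $m$-a.e. Then I set $K:=\frac{1}{2}(\Phi+G)\,D^{-1}$, which lies in $\cN\cM(\dD)$ by the closure properties above. Computing its boundary function gives $\underline{K}=\frac{1}{2}(\underline{\Phi}+\underline{\Phi}^{\ast})\,\underline{D}^{-1}=|\underline{D}|^2\,\underline{D}^{-1}=\underline{D}^{\ast}$ $m$-a.e.\ on $\dT$, so a second application of Remark~\ref{r2.10} yields $D\in\Pi(\dD)$.

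The converse $D\in\Pi(\dD)\Rightarrow\Phi\in\Pi(\dD)$ is even more direct and needs no division. From Remark~\ref{r2.10} I obtain $L\in\cN\cM(\dD)$ with $\underline{L}=\underline{D}^{\ast}$ $m$-a.e., and I put $H:=2DL-\Phi$. Then $H\in\cN\cM(\dD)$ and $\underline{H}=2\,\underline{D}\,\underline{D}^{\ast}-\underline{\Phi}=2\,{\rm Re}\,\underline{\Phi}-\underline{\Phi}=\underline{\Phi}^{\ast}$ $m$-a.e.\ on $\dT$, whence $\Phi\in\Pi(\dD)$ by Remark~\ref{r2.10} once more.

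The proof is thus essentially symmetric, and no step presents a genuine obstacle once the correct witness function is written down. The only point that requires a little care is the division by $D$ in the first direction: one must justify that $D^{-1}$ belongs to $\cN\cM(\dD)$ and that $\underline{D}^{-1}$ is its boundary function $m$-a.e.\ on $\dT$, both of which rest on the non-vanishing statement $m(\{\underline{D}=0\})=0$ supplied by Remark~\ref{r2.2}.
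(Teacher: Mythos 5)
Your proof is correct and takes essentially the same route as the paper: the witnesses $G$ and $L$ you obtain from Remark~\ref{r2.10} are precisely the functions $(\Phi^{\#})^{\wh}$ and $(D^{\#})^{\wh}$ used there, and your expressions $\frac{1}{2}(\Phi+G)\,D^{-1}$ and $2DL-\Phi$ coincide with the paper's auxiliary functions $H$ and $G$. Your explicit justification of the division by $D$ (via $m(\{\underline{D}=0\})=0$ from Remark~\ref{r2.2}) merely spells out a point the paper leaves implicit.
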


\begin{proof}
First suppose $\Phi\in\Pi (\mathbb D)$. Then we define
\[ H := \frac{1}{2} [\Phi + (\Phi^{\#})^{\wh\,}] \cdot D^{-1} \]
In view of the choice of $\Phi$ and $D$, Remark~\ref{r2.1}, and the fact that the set $\cN\cM (\dD)$ 
is an algebra over $\dC$ it follows that $H\in\cN\cM (\dD)$. Moreover, an application of Remark~\ref{r2.10} 
yields the equality $\underline{(\Phi^{\#})^{\wh\,}}=\underline{\Phi}^{\ast}$ $m$-a.e. on $\dT$. 
Hence, $\underline{H}= \underline{D}^{\ast}$ $m$-a.e. on $\dT$. 
Taking this and $H\in\cN\cM (\dD)$ into account from Remark~\ref{r2.10} we get $D\in\Pi (\mathbb D)$.
Conversely, we suppose now that $D\in\Pi (\mathbb D)$. We then set
\[ G := 2 D \cdot (D^{\#})^{\wh} - \Phi \]
Similar to the above, using Remark~\ref{r2.1}, one can reason that $G\in\cN\cM (\dD)$, where 
Remark~\ref{r2.10} results firstly in $\underline{G}= \underline{\Phi}^{\ast}$ $m$-a.e. on $\dT$. Thus, from 
Remark~\ref{r2.10} we can conclude $\Phi\in\Pi (\mathbb D)$.
\end{proof}

The following criterion for the pseudocontinuability of a non-inner function $\Theta\in\cS(\mathbb D)$ 
is the main result of this section. It is formulated in terms of the pro\-ba\-bility measure
$\mu\in\cM_{+}^{1}(\dT)$ which is associated with $\Theta$ as explained in Section~\ref{s1}.

\begin{thm} \label{t3.9}
 Let $\Theta\in\cS(\mathbb D)\setminus\cI(\mathbb D)$ and let $\mu\in\cM_{+}^{1}(\dT)$ be the measure
 associated with $\Theta$ according to {\rm Section~\ref{s1}}. Then the following statements are equivalent:
\begin{enumerate}
\item[(i)] 
 $\Theta\in\Pi(\mathbb D)$.
\item[(ii)] 
 The measure $\mu$ satisfies the Szeg\H{o} condition and the Szeg\H{o} function $D$ associated with
 $\mu$ belongs to $\Pi(\mathbb D)$.
\end{enumerate}
\end{thm}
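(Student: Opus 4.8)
The plan is to route everything through the associated Carath\'eodory function $\Phi\in\cC^0(\mathbb D)$ and the Szeg\H{o} function $D$, chaining together the preparatory results of this section with Lemma~\ref{l3.8}. By Proposition~\ref{p3.1}, the statement $\Theta\in\Pi(\mathbb D)$ in (i) is equivalent to $\Phi\in\Pi(\mathbb D)$, so it suffices to prove that $\Phi\in\Pi(\mathbb D)$ holds if and only if (ii) holds. Throughout, I would keep in mind the two identities on $\dT$ that link the three objects: on one hand Proposition~\ref{p3.6} gives ${\rm Re}\,\underline{\Phi}=w$ $m$-a.e., where $w$ is the density in the Lebesgue decomposition \eqref{Nr.3.4} of $\mu$; on the other hand, whenever $\mu$ satisfies the Szeg\H{o} condition, Proposition~\ref{p2.6} produces the Szeg\H{o} function $D$ with $|\underline{D}|^2=w$ $m$-a.e. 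Combining these yields ${\rm Re}\,\underline{\Phi}=|\underline{D}|^2$ $m$-a.e., which is precisely the hypothesis required to apply Lemma~\ref{l3.8}.

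For the direction (i) $\Rightarrow$ (ii), I would first have to secure the existence of $D$ before Lemma~\ref{l3.8} can even be invoked. Assuming $\Theta\in\Pi(\mathbb D)$, and using that $\Theta$ is non-inner, Corollary~\ref{c3.5} shows that $\ln[{\rm Re}\,\underline{\Phi}]$ is $m$-integrable, whence Corollary~\ref{c3.7} establishes that $\mu$ satisfies the Szeg\H{o} condition. This is exactly where the hypothesis $\Theta\notin\cI(\mathbb D)$ is indispensable: for an inner $\Theta$ one has ${\rm Re}\,\underline{\Phi}=w=0$ $m$-a.e., and the Szeg\H{o} condition fails. With the Szeg\H{o} condition in hand, the Szeg\H{o} function $D$ is defined, the identity ${\rm Re}\,\underline{\Phi}=|\underline{D}|^2$ holds $m$-a.e. as above, and since $\Phi\in\Pi(\mathbb D)$ by Proposition~\ref{p3.1}, Lemma~\ref{l3.8} delivers $D\in\Pi(\mathbb D)$.

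For the converse (ii) $\Rightarrow$ (i), the Szeg\H{o} condition is assumed outright, so $D$ is immediately available and $|\underline{D}|^2=w={\rm Re}\,\underline{\Phi}$ $m$-a.e. holds without further ado. From $D\in\Pi(\mathbb D)$, Lemma~\ref{l3.8} now runs in the opposite direction to give $\Phi\in\Pi(\mathbb D)$, and Proposition~\ref{p3.1} returns $\Theta\in\Pi(\mathbb D)$, as desired.

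The remaining things I would verify are the standing hypotheses of Lemma~\ref{l3.8}, namely that $\Phi$ and $D$ both lie in $\cN\cM(\dD)$ and that neither vanishes identically in $\dD$: since $\Phi\in\cC^0(\mathbb D)$ we have $\Phi(0)=1\neq0$ (and $\Phi$ is outer in $\cN(\dD)$ by Remark~\ref{r2.4}), while the Szeg\H{o} function $D$ is outer and hence zero-free in $\dD$. The main obstacle is not any single computation but rather the asymmetry between the two directions: in (i) $\Rightarrow$ (ii) the object $D$ occurring in the conclusion must first be manufactured out of the Szeg\H{o} condition, which is the delicate step forcing the use of the non-inner assumption, whereas in (ii) $\Rightarrow$ (i) that object is handed over as a hypothesis.
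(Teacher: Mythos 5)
Your proposal is correct and follows essentially the same route as the paper's own proof: Proposition~\ref{p3.1} to pass between $\Theta$ and $\Phi$, Corollary~\ref{c3.5} together with Corollary~\ref{c3.7} to obtain the Szeg\H{o} condition from (i), the identity ${\rm Re}\,\underline{\Phi}=|\underline{D}|^2$ $m$-a.e. via Propositions~\ref{p2.6} and \ref{p3.6}, and finally Lemma~\ref{l3.8} in both directions. Your added remarks on where the non-inner hypothesis enters and on checking the standing hypotheses of Lemma~\ref{l3.8} are accurate refinements of the same argument.
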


\begin{proof}
Let $\Phi \in \cC^0 (\mathbb D)$ be defined by \eqref{Nr.0.1}.
Assume that the measure $\mu$ satisfies the Szeg\H{o} condition. Hence, from Proposition~\ref{p2.6} we infer
that the Szeg\H{o} function $D$ associated with $\mu$ belongs to $\cN(\mathbb D)$. Moreover, the combination
of Proposition~\ref{p2.6} and Proposition~\ref{p3.6} yields the equality
\begin{equation} \label{Nr.x1}
 {\rm Re}\,\underline{\Phi} = |\underline{D}|^2 
\end{equation}
$m$-a.e. on $\dT$. Suppose now that (i) holds. 
Then from Proposition~\ref{p3.1} we get 
\begin{equation} \label{Nr.x2}
 \Phi\in\Pi(\mathbb D).
\end{equation}
Moreover, in view of (i) and Corollary~\ref{c3.5}, the function $\ln [{\rm Re}\,\underline{\Phi}]$ is $m$-integrable.
Thus, Corollary~\ref{c3.7} shows that $\mu$ satisfies the Szeg\H{o} condition. Consequently, \eqref{Nr.x1} is
satisfied. By virtue of \eqref{Nr.x1}, \eqref{Nr.x2}, and Lemma~\ref{l3.8} it follows that $D\in\Pi(\mathbb D)$.
So we have verified that (i) implies (ii). Conversely, we suppose now that (ii) holds. From (ii) we obtain 
\eqref{Nr.x1} and $D\in\Pi(\mathbb D)$. Therefore, Lemma~\ref{l3.8} yields \eqref{Nr.x2}. Accordingly, an 
application of Proposition~\ref{p3.1} supplies $\Theta\in\Pi(\mathbb D)$, i.e. (i).
\end{proof}


\section{An~operator--theoretic~approach~to~study~the~pseudocontinuability in $\cS(\mathbb D)\setminus\cI(\mathbb D)$}
\label{s4}


The starting point of this section is the observation that a given Schur function $\Theta\in\cS(\mathbb D)$ can be
represented as characteristic function of some contraction in a Hilbert space. That means that there exists a separable
complex Hilbert space and bounded linear operators $T:\gH\to\gH$,\, $F:\dC\to\gH$,\, $G:\gH\to\dC$, and $S:\dC\to\dC$ 
such that the block operator 
\begin{equation}\label{Nr.d1.2b}
 U:= \left(\begin{matrix} T & F \cr G & S \end{matrix}\right) :\gH\oplus\dC\to\gH\oplus\dC
\end{equation}
is unitary and moreover that the equality
\begin{equation}\label{Nr.d1.2c}
 \Theta (\zeta) = S + \zeta G (I - \zeta T)^{-1} F,
 \quad \zeta\in\mathbb D,
\end{equation}
is fulfilled. Note that in \eqref{Nr.d1.2b} the complex plane $\dC$ is considered as the one-dimensional
complex Hilbert space with the usual inner product
\[ \bigl(z,w\bigr)_{\dC} = z^{\ast}w,
\quad z,w\in\dC.   \]

The unitarity of the operator $U$ implies that the operator $T$ is contractive (i.e. $\|T\|\leq 1$).
Thus, for all $\zeta\in\dD$ the operator $I - \zeta T$ is boundedly invertible. 
The unitarity of the operator $U$ means that the ordered tuple
\begin{equation}\label{Nr.d1.5b}
 \bigtriangleup = (\gH, \dC, \dC; T, F, G, S)
\end{equation}
is a unitary colligation. In view of \eqref{Nr.d1.2c} the function $\Theta$ is the characteristic
operator function of the unitary colligation $\bigtriangleup$. 

The following subspaces of $\gH$ will later play an important role
\begin{equation}\label{Nr.d1.5c} 
 \gH_{\gF}:=\bigvee\limits_{n=0}^{\infty}T^nF(\dC),\quad
 \gH_{\gG}:=\bigvee\limits_{n=0}^{\infty}(T^{*})^{n} G^*(\dC) .
\end{equation}
By the symbol $\bigvee_{n=0}^{\infty} A_n$ we mean the smallest closed subspace 
generated by the subsets $A_n$ of the corresponding vector spaces. 
The spaces $\gH_\gF$ and $\gH_\gG$ are called the subspaces of controllability and
observability, respectively.
We note that the unitary operator $U$ can be chosen such that 
\begin{equation}\label{Nr.d1.5d} 
  \gH = \gH_\gF\vee\gH_\gG 
\end{equation}
holds. In this case the unitary colligation $\bigtriangleup$ is called simple. 
The simplicity of a unitary colligation means that there does not exist a nontrivial 
invariant subspace of $\gH$ on which the operator $T$ induces a unitary operator.
Such contractions, $T$, are called completely non--unitary.

In the language of unitary colligations the pseudocontinuability of a non-inner Schur function 
$\Theta\in\cS(\mathbb D)$ has the following consequence.

\begin{prop} \label{p4.1}
 Let $\Theta\in \cS\Pi(\mathbb D)\setminus \cI(\mathbb D)$ and let 
 $\bigtriangleup$ be a simple unitary colligation, \eqref{Nr.d1.5b} the characteristic
 operator function of which coincides with $\Theta$. Then the subspaces
\begin{equation}\label{Nr.d1.5e} 
 \gH_{\gF}^\perp:=\gH\ominus\gH_{\gF},\quad  \gH_{\gG}^\perp:=\gH\ominus\gH_{\gG}
\end{equation}
 are nontrivial.
\end{prop}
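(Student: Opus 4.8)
The plan is to show that each of the two subspaces is precisely the part of the space on which $T^\ast$ (respectively $T$) acts as a nontrivial unilateral shift, and then to manufacture such a shift out of the Szeg\H{o} condition that pseudocontinuability forces. First I would record an adjoint symmetry so that only one of the two assertions must be treated. The tuple $\bigtriangleup^\ast:=(\gH,\dC,\dC;T^\ast,G^\ast,F^\ast,S^\ast)$ has block operator $U^\ast$, hence is again a simple unitary colligation; a short computation (for scalar outputs) shows its characteristic function is $\zeta\mapsto\overline{\Theta(\overline{\zeta})}$, which again belongs to $\cS\Pi(\mathbb D)\setminus\cI(\mathbb D)$, and its controllability and observability subspaces are $\gH_\gG$ and $\gH_\gF$, respectively. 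So it suffices to prove the single statement that for a simple colligation with pseudocontinuable non-inner characteristic function the controllability subspace is proper, i.e. $\gH_\gF\neq\gH$: applying this to $\bigtriangleup$ gives $\gH_\gF^\perp\neq\{0\}$, and applying it to $\bigtriangleup^\ast$ gives $\gH_\gG^\perp\neq\{0\}$.

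Next I would identify $\gH_\gF^\perp$ operator-theoretically. Unwinding the definition, $v\in\gH_\gF^\perp$ iff $F^\ast(T^\ast)^nv=0$ for all $n\ge 0$; since the unitarity relation $FF^\ast=I-TT^\ast$ gives $\|F^\ast u\|^2=\|u\|^2-\|T^\ast u\|^2$, this says precisely that $\|(T^\ast)^{n+1}v\|=\|(T^\ast)^nv\|$ for all $n$. Thus $\gH_\gF^\perp$ is the largest $T^\ast$-invariant subspace on which $T^\ast$ is isometric. Because $T$ is completely non--unitary, this isometry can have no unitary part: on the Wold-unitary summand $\mathfrak M=\bigcap_n(T^\ast)^n\gH_\gF^\perp$ one has $T^\ast\mathfrak M=\mathfrak M$ with $T^\ast$ isometric, and then $F^\ast v=0$ (so $TT^\ast v=v$) for $v\in\mathfrak M$ forces $\mathfrak M$ to reduce $T$ with $T|_{\mathfrak M}$ unitary, whence $\mathfrak M=\{0\}$. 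Consequently $T^\ast|_{\gH_\gF^\perp}$ is a pure isometry, and $\gH_\gF^\perp\neq\{0\}$ is equivalent to $T^\ast$ containing a nontrivial unilateral shift, namely the maximal shift $V_{T^\ast}$. The task is thereby reduced to producing this shift.

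The analytic input is the Szeg\H{o} condition. Since $\Theta\in\cS\Pi(\mathbb D)\setminus\cI(\mathbb D)$, Corollary~\ref{c3.5} gives that $\ln[{\rm Re}\,\underline{\Phi}]$ is $m$-integrable, and then Remark~\ref{r3.3} together with Proposition~\ref{p3.4} shows $\ln[1-|\underline{\Theta}|^2]$ is $m$-integrable; equivalently the defect density $\Delta:=(1-|\underline{\Theta}|^2)^{1/2}$ satisfies $\ln\Delta\in L^1_m$. After replacing $\bigtriangleup$ by the unitarily equivalent Sz.-Nagy--Foia\c{s} model of the completely non--unitary contraction $T$, the residual part of the unitary dilation is multiplication by the independent variable on $\overline{\Delta L^2}\subseteq L^2(\dT)$. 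The Szeg\H{o} condition $\ln\Delta\in L^1_m$ is exactly the hypothesis under which the analytic subspace $\overline{\Delta H^2}$ is a proper invariant subspace of $\overline{\Delta L^2}$ with one--dimensional wandering subspace $\overline{\Delta H^2}\ominus z\,\overline{\Delta H^2}$; this yields a unilateral shift inside the model which, tracked through the model identification, is contained in $T^\ast$. Hence $V_{T^\ast}\neq\{0\}$ and $\gH_\gF^\perp\neq\{0\}$.

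The step I expect to be the main obstacle is this last one: converting $\ln\Delta\in L^1_m$ into an honest shift contained in $T^\ast$ inside the concrete model space $\gH$. The delicate points are the bookkeeping of $\gH$ as a quotient of $H^2\oplus\overline{\Delta L^2}$, verifying that the wandering vector supplied by Szeg\H{o}'s theorem survives the subtraction of the graph $\{(\Theta w,\Delta w):w\in H^2\}$ and the compression defining $T$, and checking that the resulting isometric invariant subspace is genuinely $\gH_\gF^\perp$. (Alternatively one may bypass the model altogether and invoke the equivalence, established in the theory of maximal shifts contained in contractions, between the Szeg\H{o} condition for $\mu$ and the nontriviality of $V_T$ and $V_{T^\ast}$.) Everything else is formal: the adjoint-symmetry reduction, the Wold/complete-non-unitarity argument of the second paragraph, and the purely function-theoretic passage from pseudocontinuability to $\ln\Delta\in L^1_m$.
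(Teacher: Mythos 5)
Your proposal is correct, but it takes a genuinely different route from the paper, which in fact prints no proof of Proposition~\ref{p4.1}: there the statement is an immediate consequence of Theorem~\ref{t4.2} (quoted from \cite{BDFK05}), since a nonzero vector in $\gH_\gG\cap\gH_\gF^\perp$ (resp. $\gH_\gF\cap\gH_\gG^\perp$) already witnesses $\gH_\gF^\perp\neq\{0\}$ (resp. $\gH_\gG^\perp\neq\{0\}$); alternatively, within the paper's own machinery it follows from Theorem~\ref{t3.9} (pseudocontinuability of non-inner $\Theta$ forces the Szeg\H{o} condition on $\mu$), the concrete colligation $\bigtriangleup_\mu$ of Theorem~\ref{2.1}, the classical equivalence of the Szeg\H{o} condition with non-completeness of the polynomials in $L^2_\mu$ (Rosenblum/Rovnyak \cite{RR}), Corollaries~\ref{3.3} and~\ref{3.4}, and uniqueness of simple unitary colligations with prescribed characteristic function up to unitary equivalence --- exactly the bypass you mention parenthetically. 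What you do instead is self-contained: you derive from $FF^\ast=I-TT^\ast$ that $\gH_\gF^\perp$ is the largest $T^\ast$-invariant subspace on which $T^\ast$ is isometric (the paper cites this to \cite{Dub06}), kill the unitary Wold summand by complete non-unitarity, reduce $\gH_\gG^\perp$ to $\gH_\gF^\perp$ via the adjoint colligation (whose characteristic function is $\Theta^\ast(\zeta^\ast)$, consistent with Section~\ref{s7}), and then produce the shift in the Sz.-Nagy--Foia\c{s} model. That last step, which you flag as the main obstacle, in fact goes through with lighter bookkeeping than you anticipate: the subspace $\{0\}\oplus\bigl(\overline{\Delta L^2}\ominus\overline{\Delta H^2}\bigr)$ lies entirely inside the model space (a vector $0\oplus g$ is orthogonal to the graph $\{\Theta w\oplus\Delta w : w\in H^2\}$ exactly when $g\perp\overline{\Delta H^2}$), is $T^\ast$-invariant with $T^\ast$ acting on it as multiplication by $\bar z$, hence isometrically, and is nonzero precisely under the Szeg\H{o} condition (write $\Delta=|\underline{D_0}|$ with $D_0$ outer, so that $\overline{\Delta H^2}$ is a proper shift-invariant subspace of $\overline{\Delta L^2}=L^2$). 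Two small corrections: the shift contained in $T^\ast$ acts on this orthogonal complement, not on $\overline{\Delta H^2}$ itself as your wording suggests ($\overline{\Delta H^2}$ carries multiplication by $z$ but does not sit inside $\gH$); and your final worry is unnecessary --- by your own maximality characterization, mere containment of this nonzero isometric subspace in $\gH_\gF^\perp$ already gives $\gH_\gF^\perp\neq\{0\}$, which is all the proposition asserts, so no identification with $\gH_\gF^\perp$ is needed. Your route buys independence from the external citations; the paper's route buys brevity and coherence with the $L^2_\mu$ picture developed in Sections~\ref{s5}--\ref{s7}.
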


Because of \eqref{Nr.d1.5c} and \eqref{Nr.d1.5e} it follows that the subspace $\gH_\gG^\perp$ 
$($resp. $\gH_\gF^\perp)$ is invariant with respect to $T$ $($resp. $T^*)$.
It can be shown (see \cite[Chapter~1]{Dub06}) that 
\[  V_T:= {\rm Rstr}._{\gH_\gG^\perp}T, \quad
    V_{T^*}:= {\rm Rstr.}_{\gH_\gF^\perp}T^* \]
are unilateral shifts. More precisely, $V_T$ (resp. $V_{T^*}$) is just the maximal unilateral shift
contained in $T$ $($resp. $T^*)$. This means that an arbitrary invariant subspace with
respect to $T$ $($resp. $T^*)$ on which $T$ $($resp. $T^*)$ induces a unilateral shift is 
contained in $\gH_\gG^\perp$ $($resp. $\gH_\gF^\perp)$. 

In terms of unitary colligations the pseudocontinuability of a non-inner Schur function 
$\Theta\in\cS(\mathbb D)$ was characterized in \cite[Theorem~3.17]{BDFK05} as follows.

\begin{thm} \label{t4.2}
 Let $\Theta\in\cS(\mathbb D)$ and let $\bigtriangleup$ be a simple unitary colligation \eqref{Nr.d1.5b},
 the characteristic operator function of which coincides with $\Theta$. Then the following
 statements are equivalent:
\begin{enumerate}
\item[(i)] 
 $\Theta\in\Pi(\mathbb D)\setminus\cI(\mathbb D)$.
\item[(ii)] \vspace{0.5mm}
 $\gH_{\gG}\cap\gH_\gF^\perp \not= \{0\}$.
\item[(iii)] \vspace{0.5mm}
 $\gH_{\gF}\cap\gH_\gG^\perp \not= \{0\}$.
\end{enumerate}
\end{thm}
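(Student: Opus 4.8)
The plan is to reduce the three-way equivalence to the single pair (i)$\Leftrightarrow$(ii) by a duality argument and then to translate both sides into a concrete functional model, bringing the measure $\mu$ into play through Theorem~\ref{t3.9}. First I would record the algebraic consequences of the unitarity of $U$ in \eqref{Nr.d1.2b}: the relations $T^{\ast}T+G^{\ast}G=I$ and $TT^{\ast}+FF^{\ast}=I$ identify the defect spaces of $T$ and $T^{\ast}$ with $\overline{G^{\ast}(\dC)}$ and $\overline{F(\dC)}$, and show (as indicated in the text preceding Theorem~\ref{t4.2}) that $T^{\ast}$ acts isometrically on $\gH_{\gF}^{\perp}$ and $T$ isometrically on $\gH_{\gG}^{\perp}$, so that by simplicity these restrictions are the maximal shifts $V_{T^{\ast}}$ and $V_{T}$. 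I would then dispose of (ii)$\Leftrightarrow$(iii) by passing to the adjoint colligation $\bigtriangleup^{\ast}=(\gH,\dC,\dC;T^{\ast},G^{\ast},F^{\ast},S^{\ast})$: this interchanges $\gH_{\gF}$ and $\gH_{\gG}$ and replaces $\Theta$ by the reflected Schur function $\zeta\mapsto\overline{\Theta(\overline{\zeta})}$, which lies in $\Pi(\mathbb D)\setminus\cI(\mathbb D)$ precisely when $\Theta$ does. Hence (ii) for $\bigtriangleup^{\ast}$ is exactly (iii) for $\bigtriangleup$, and it suffices to prove (i)$\Leftrightarrow$(ii).

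For the main equivalence I would work in the Sz.-Nagy--Foias model. Writing $\delta(t):=\sqrt{1-|\underline{\Theta}(t)|^{2}}$, which is positive on a set of positive $m$-measure because $\Theta\notin\cI(\mathbb D)$, realize $\gH$ as $(H^{2}\oplus\overline{\delta L^{2}})\ominus\{(\Theta w,\delta w):w\in H^{2}\}$ with $T$ the associated model operator (a compression of multiplication by $t$). In this realization the generating defect vectors are explicit, so $\gH_{\gG}=\bigvee_{n\ge 0}(T^{\ast})^{n}\overline{G^{\ast}(\dC)}$ and $\gH_{\gF}^{\perp}$ become identifiable function spaces; moreover, by Remark~\ref{r3.3} and Proposition~\ref{p3.6} the defect density $\delta^{2}=1-|\underline{\Theta}|^{2}$ is a multiple of the absolutely continuous density $w$ of $\mu$, which couples the second component $\overline{\delta L^{2}}$ to $\mu$. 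Combining Theorem~\ref{t3.9} with the Szeg\H{o}/Kolmogorov--Krein theory, one checks that $\gH_{\gF}^{\perp}$ is nontrivial exactly when $\mu$ satisfies the Szeg\H{o} condition.

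The heart of the argument is to show that a nonzero element of $\gH_{\gG}\cap\gH_{\gF}^{\perp}$ is the same datum as a pseudocontinuation of $\Theta$. Membership in $\gH_{\gF}^{\perp}$ places the vector in the part on which $T^{\ast}$ is a shift, forcing its analytic component to be non-cyclic for the backward shift and hence, by the Douglas--Shapiro--Shields theory, pseudocontinuable across $\dT$; membership in $\gH_{\gG}$ couples this component to $\Theta$ through the model, so that the resulting boundary identity is precisely the relation $\underline{H}=\underline{\Theta}^{\ast}$ of Remark~\ref{r2.10} that characterizes $\Theta\in\Pi(\mathbb D)$. Conversely, from a pseudocontinuation of $\Theta$ --- equivalently, via Theorem~\ref{t3.9} and Proposition~\ref{p2.14}, from inner functions $I_{1},I_{2}$ factoring $\underline{D}\,(\underline{D}^{\ast})^{-1}$ for the Szeg\H{o} function $D$ associated with $\mu$ --- I would build an explicit nonzero vector lying in both subspaces.

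The main obstacle is the explicit identification of $\gH_{\gG}$ and $\gH_{\gF}^{\perp}$ inside the two-component model, together with the bookkeeping of the defect component $\overline{\delta L^{2}}$: one must verify that the analytic and anti-analytic parts of a candidate vector assemble into genuine functions of the Nevanlinna class on both sides of $\dT$, track the singular part of $\mu$ (which feeds the shift but does not contribute to pseudocontinuability), and confirm that the intersection is populated exactly under pseudocontinuability. Proposition~\ref{p4.1} supplies the easy half of the nontriviality bookkeeping, guaranteeing $\gH_{\gF}^{\perp}\neq\{0\}$ once (i) holds; the delicate point is to upgrade this from nontriviality of $\gH_{\gF}^{\perp}$ to nontriviality of $\gH_{\gG}\cap\gH_{\gF}^{\perp}$, which is precisely where the pseudocontinuability of the Szeg\H{o} function, rather than the mere Szeg\H{o} condition, must enter.
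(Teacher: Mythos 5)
Your reduction of (ii)$\Leftrightarrow$(iii) via the adjoint colligation is correct and economical: passing to $(\gH,\dC,\dC;T^{\ast},G^{\ast},F^{\ast},S^{\ast})$ interchanges $\gH_{\gF}$ and $\gH_{\gG}$ and replaces $\Theta(\zeta)$ by $\Theta^{\ast}(\zeta^{\ast})$, which belongs to $\Pi(\mathbb D)\setminus\cI(\mathbb D)$ exactly when $\Theta$ does; this matches the paper's own use of the adjoint colligation in Section~\ref{s7}. Note, for orientation, that the paper does not prove Theorem~\ref{t4.2} directly --- it is quoted from \cite[Theorem~3.17]{BDFK05} --- but Sections~\ref{s5}--\ref{s8} in effect re-derive it through the concrete colligation $\bigtriangleup_{\mu}$ on $L_{\mu}^2$ rather than the Sz.-Nagy--Foias model, and your plan (concrete model, Szeg\H{o} function, inner factorization via Proposition~\ref{p2.14} and Theorem~\ref{t3.9}) is the same architecture.

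The genuine gap is in the heart of (i)$\Leftrightarrow$(ii), and one step of your sketch is wrong as stated: membership in $\gH_{\gF}^{\perp}$ alone cannot force non-cyclicity for the backward shift or any pseudocontinuability. By Corollary~\ref{3.12}, once the Szeg\H{o} condition holds, $\gH_{\mu,\gF}^{\perp}$ is metrically isomorphic to \emph{all} of $L_{m,+}^2$, its elements being $f_{0}\cdot(\underline{D}^{\ast})^{-1}$ with arbitrary $f_{0}\in L_{m,+}^2$; generic such vectors carry no pseudocontinuation whatsoever, and indeed $\gH_{\gF}^{\perp}\neq\{0\}$ is equivalent merely to the Szeg\H{o} condition, which you yourself note is strictly weaker than (i). The pseudocontinuability enters only through the \emph{simultaneous} membership in $\gH_{\gG}$, and the mechanism converting that into the boundary identity is exactly what your proposal names but does not execute. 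Concretely, the missing computations are: (1) the analogue of Lemma~\ref{3.9} --- the F.\ and M.\ Riesz theorem kills the singular part and, with Beurling's theorem, forces a wandering generator $h_{0}$ of the shift on $\gH_{\gG}\cap\gH_{\gF}^{\perp}$ to have the rigid form $h_{0}(t)=t\,(\underline{D}^{\ast}(t))^{-1}\underline{\hat{I}_1}(t)$ with $\hat{I}_1\in\cI(\dD)$; (2) the approximation argument of Theorem~\ref{4.5} --- $h_{0}\in\gH_{\gG}$ means $h_{0}$ is an $L_{\mu}^2$-limit of polynomials in $t$, and multiplying through by $\underline{D}$ converts this into an $H^2(\mathbb D)$-limit whose unimodular boundary value yields an inner $I_{2}$ with $\underline{D}\,(\underline{D}^{\ast})^{-1}=\underline{I_2}\cdot\underline{I_1}^{-1}$, i.e.\ (via Proposition~\ref{p2.14} and Theorem~\ref{t3.9}) statement (i); and (3) the converse construction, where one must check that the explicit vector $t\,\underline{I_1}(t)(\underline{D}^{\ast}(t))^{-1}$ is orthogonal to $\varphi_0^{\ast}$ and to all $\psi_n^{\ast}$ (the computations \eqref{Nr.4.13} and the chain preceding \eqref{Nr.4.10}), which is where the singular part $\mu_s$ and the set $E$ must be tracked. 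Without these three items the proposal is a program, not a proof: the ``delicate point'' you flag at the end is precisely the content of the theorem.
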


A comparison of Theorem~\ref{t3.9} and Theorem~\ref{t4.2} shows that they contain
rather different characterizations of the pseudocontinuability of a non-inner Schur function  
$\Theta\in\cS(\mathbb D)$.
Our subsequent considerations are aimed at establishing direct connections between the two different 
criteria.


\section{A unitary colligation associated with a Borel probability measure on the unit circle}
\label{s5}


Let $\mu\in\cM^1_+ (\mathbb T)$. Then our subsequent
considerations are concerned with the investigation of the unitary
operator $U_\mu^\times$ which is defined for $f\in L_\mu^2$ by
\begin{equation}\label{Nr.2.0}
 (U_\mu^\times f) (t) := \frac{1}{t} \cdot f(t),
 \quad t\in\mathbb T.
\end{equation}
Denote by $\tau$ the embedding operator of $\mathbb C$ into $L_\mu^2$, i.e. $\tau :\mathbb C\to L_\mu^2$
and for each $c\in\mathbb C$ the image $\tau (c)$ of $c$ is the constant function on $\mathbb T$
with value $c$. Denote by $\mathbb C_\mathbb T$ the subspace of $L_\mu^2$ which is generated by
the constant functions and denote by ${\bf 1}$ the constant function on $\mathbb T$ with value $1$.
Then obviously $\tau (\mathbb C) =\mathbb C_\mathbb T$ and $\tau (1) = {\bf 1}$.

We consider the subspace
\[
\gH_\mu := L_\mu^2 \ominus \mathbb C_\mathbb T.
\]
Denote by
\[
U_\mu^\times = \left(\begin{matrix}
T^\times & F^\times\cr
G^\times & S^\times
\end{matrix}\right)
\]
the block representation of the operator $U_\mu^\times$ with respect to the orthogonal decomposition
$L_\mu^2 = \gH_\mu\oplus \mathbb C_\mathbb T$. 
Then (see \cite[Section~2.8]{Dub06}) the following result holds.

\begin{thm}\label{2.1}
Let $\mu\in\cM_+^1 (\mathbb T)$. Define $T_\mu := T^\times$,\; $F_\mu := F^\times \tau$,\;
$G_\mu := \tau^\ast G^\times$,\;and $S_\mu := \tau^\ast S^\times\tau$. Then
\begin{equation}\label{Nr.2.2}
\bigtriangleup_\mu := ( \gH_\mu,\mathbb C, \mathbb C;  T_\mu, F_\mu, G_\mu, S_\mu)
\end{equation}
is a simple unitary colligation, the characteristic function $\Theta_{\bigtriangleup_\mu}$ of which
coincides with the Schur function $\Theta$ associated with $\mu$.
\end{thm}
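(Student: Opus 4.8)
The plan is to treat the three assertions separately — that the block operator defining $\bigtriangleup_\mu$ is unitary, that its characteristic function equals $\Theta$, and that $\bigtriangleup_\mu$ is simple — keeping everything anchored to the single unitary operator $U_\mu^\times$ on $L_\mu^2$. The one structural fact I would use repeatedly is that $\mu(\dT)=1$, so that $\|\mathbf 1\|_{L_\mu^2}^2=\mu(\dT)=1$ and $\tau$ is isometric with range $\dC_\dT$; hence $\tau^\ast\tau=I_\dC$ and $\tau\tau^\ast$ is the orthogonal projection onto $\dC_\dT$.

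Unitarity is then essentially free. Put $W:=I_{\gH_\mu}\oplus\tau$, a map from $\gH_\mu\oplus\dC$ onto $\gH_\mu\oplus\dC_\dT=L_\mu^2$; since $\tau$ is an isometry onto $\dC_\dT$, the operator $W$ is unitary. A single block multiplication, using only the definitions $T_\mu=T^\times$, $F_\mu=F^\times\tau$, $G_\mu=\tau^\ast G^\times$, $S_\mu=\tau^\ast S^\times\tau$, yields
\[
 \begin{pmatrix} T_\mu & F_\mu \\ G_\mu & S_\mu \end{pmatrix}= W^\ast U_\mu^\times W,
\]
which is unitary as a product of unitary operators. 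Thus $\bigtriangleup_\mu$ is a unitary colligation, and in particular $\Theta_{\bigtriangleup_\mu}\in\cS(\dD)$.

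The computation of $\Theta_{\bigtriangleup_\mu}$ is the heart of the matter and the step I expect to be the main obstacle, because it requires both a functional-calculus identity and a careful block/Schur-complement manipulation with the $\tau$-identifications tracked precisely. The idea is to compute the compression of the resolvent of $U_\mu^\times$ to $\dC_\dT$ in two ways. On the one hand, $U_\mu^\times$ is multiplication by $1/t=\bar t$, so for $\zeta\in\dD$ the operator $(I+\zeta U_\mu^\times)(I-\zeta U_\mu^\times)^{-1}$ is multiplication by $\tfrac{t+\zeta}{t-\zeta}$, whence
\[
 \bigl(\mathbf 1,(I+\zeta U_\mu^\times)(I-\zeta U_\mu^\times)^{-1}\mathbf 1\bigr)_{L_\mu^2}=\int_\dT \frac{t+\zeta}{t-\zeta}\,\mu(dt)=\Phi(\zeta),
\]
the Carath\'eodory function associated with $\mu$ via \eqref{Nr.0.3} (with $\beta=0$). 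On the other hand, I would write $I-\zeta U_\mu^\times$ as a $2\times2$ block matrix over $\gH_\mu\oplus\dC_\dT$, whose $(1,1)$ block $I-\zeta T^\times$ is invertible on $\dD$ since $\|T^\times\|\le1$; the Schur complement of that block is, after transport through $\tau$, the scalar $(1-\zeta S_\mu)-\zeta^2 G_\mu(I-\zeta T_\mu)^{-1}F_\mu=1-\zeta\Theta_{\bigtriangleup_\mu}(\zeta)$ by \eqref{Nr.d1.2c}. Since $\dC_\dT$ is one-dimensional, the $(2,2)$ block of $(I-\zeta U_\mu^\times)^{-1}$ is therefore multiplication by $[1-\zeta\Theta_{\bigtriangleup_\mu}(\zeta)]^{-1}$, and using $(I+\zeta U_\mu^\times)(I-\zeta U_\mu^\times)^{-1}=2(I-\zeta U_\mu^\times)^{-1}-I$ together with $\mathbf 1\in\dC_\dT$ gives
\[
 \Phi(\zeta)=\frac{2}{1-\zeta\Theta_{\bigtriangleup_\mu}(\zeta)}-1=\frac{1+\zeta\Theta_{\bigtriangleup_\mu}(\zeta)}{1-\zeta\Theta_{\bigtriangleup_\mu}(\zeta)}.
\]
Comparing with \eqref{Nr.0.1} and invoking the bijectivity of the correspondence $\Theta\mapsto\Phi$ between $\cS(\dD)$ and $\cC^0(\dD)$, I conclude $\Theta_{\bigtriangleup_\mu}=\Theta$.

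For simplicity I would argue by contradiction from the characterization in the text: suppose there is a nontrivial $\mathfrak N\subseteq\gH_\mu$ that is $T_\mu$-invariant and on which $T_\mu$ induces a unitary operator. For $h\in\mathfrak N$ the isometry of $T_\mu|_{\mathfrak N}$ forces $\|P_{\gH_\mu}U_\mu^\times h\|=\|h\|=\|U_\mu^\times h\|$, so the $\dC_\dT$-component of $U_\mu^\times h$ vanishes and $U_\mu^\times h=T_\mu h\in\mathfrak N$; surjectivity of $T_\mu|_{\mathfrak N}$ then gives $U_\mu^\times\mathfrak N=\mathfrak N$, so $\mathfrak N$ reduces the unitary $U_\mu^\times$. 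Since $\mathfrak N\subseteq\gH_\mu\perp\dC_\dT$ and $\mathfrak N$ reduces $U_\mu^\times$, it is orthogonal to $(U_\mu^\times)^n\dC_\dT$ for every $n\in\dZ$. But $(U_\mu^\times)^n\mathbf 1$ runs over $\{t^n:n\in\dZ\}$, whose closed linear span is all of $L_\mu^2$ because the trigonometric polynomials are dense in $L_\mu^2$ (Stone--Weierstrass together with the density of $C(\dT)$ in $L_\mu^2$). Hence $\mathfrak N=\{0\}$, a contradiction, and $\bigtriangleup_\mu$ is simple.
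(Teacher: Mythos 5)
Your proof is correct. Note that the paper itself contains no proof of Theorem~\ref{2.1}: it is imported wholesale from \cite[Section~2.8]{Dub06}, so your write-up is in effect a self-contained replacement, and it holds up at every step. The conjugation identity expressing the block operator of $\bigtriangleup_\mu$ as $W^{\ast}U_\mu^{\times}W$ with $W=I_{\gH_\mu}\oplus\tau$ is exactly where $\mu(\dT)=1$ enters (it makes $\tau$ isometric), and it settles unitarity in one line. Your double evaluation of the compression of $(I+\zeta U_\mu^{\times})(I-\zeta U_\mu^{\times})^{-1}$ to $\dC_\dT$ is sound on both sides: multiplication by $(t+\zeta)/(t-\zeta)$ gives $\Phi$ via \eqref{Nr.0.3} with $\beta=0$, while the Schur complement of the invertible $(1,1)$ block, transported through $\tau$, gives $\bigl[1-\zeta\Theta_{\bigtriangleup_\mu}(\zeta)\bigr]^{-1}$, and the identity $(I+\zeta U)(I-\zeta U)^{-1}=2(I-\zeta U)^{-1}-I$ is correct. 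One small streamlining: you do not need $\Theta_{\bigtriangleup_\mu}\in\cS(\dD)$ plus bijectivity, since \eqref{Nr.0.1f} applied to your formula for $\Phi$ gives $\zeta\Theta_{\bigtriangleup_\mu}(\zeta)=\zeta\Theta(\zeta)$ directly, hence equality on $\dD\setminus\{0\}$ and at $0$ by continuity. Your simplicity argument — Pythagoras forcing the $\dC_\dT$-component of $U_\mu^{\times}h$ to vanish, surjectivity of $T_\mu$ on $\gN$ upgrading invariance to reduction, and cyclicity of $\mathbf{1}$ for $U_\mu^{\times}$ via density of the trigonometric polynomials in $L_\mu^2$ — is legitimate, and your use of complete non-unitarity as the criterion is licensed by the paper's own gloss following \eqref{Nr.d1.5d}.
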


In view of Theorem~\ref{2.1} the operator $T_\mu$ is a completely non--unitary contraction and if the function $\Phi$ is given by 
(\ref{Nr.0.3}) with $\beta=0$, then from (\ref{Nr.0.1f}) it follows that
\[ \zeta\Theta_{\bigtriangleup_\mu} (\zeta) = \frac{\Phi (\zeta) -1}{\Phi (\zeta) + 1},
\quad\zeta\in\mathbb D. \]

\begin{defn}
 Let $\mu\in\cM_+^1 (\mathbb T)$. Then the simple unitary colligation given by \eqref{Nr.2.2} is called the
 unitary colligation associated with $\mu$.
\end{defn}


\section{Completion of the system of orthogonal polynomials in the space $L_\mu^2$ in the case of non--completeness}
\label{s6}


Let $\mu\in\cM_+^1 (\mathbb T)$ and suppose that the measure $\mu$ has infinitely many points of growth. 
Furthermore, we use, for all integers $n$, the mapping $e_n :\mathbb T\to\mathbb C$ defined by
\begin{equation}\label{Nr.EK}
 e_n (t) := t^n .
\end{equation}
Thus, we have $e_{-n} = (U_\mu^\times)^n {\bf 1}$,
where $U_\mu^\times$ is the operator defined by (\ref{Nr.2.0}). 
We now consider the system $\{e_0, e_{-1}, e_{-2}, \ldots\}$. 
Applying the Gram-Schmidt orthogonalization method in the space $L_\mu^2$ we get a unique sequence
$(\varphi_n)_{n=0}^\infty$ of polynomials, where 
\begin{equation}\label{Nr.3.1}
 \varphi_n (t) = \alpha_{n,n} t^{-n} + \alpha_{n,n-1} t^{-(n-1)} + \cdots + \alpha_{n,0},
 \; t\in\mathbb T,\quad n\in \{0,1,2,\ldots \},
\end{equation}
such that the conditions
\begin{equation}\label{Nr.3.2}
 \bigvee_{k=0}^n \varphi_k = \bigvee_{k=0}^n (U_\mu^\times)^k {\bf 1},
 \quad \bigl((U_\mu^\times)^n {\bf 1}, \varphi_n\bigr)_{L_\mu^2} > 0, 
 \quad n\in \{0,1,2,\ldots \},
\end{equation}
are satisfied. Here and in the following, if $(h_\alpha)_{\alpha\in A}$ is some family of elements of $L_\mu^2$, then the
symbol $\bigvee_{\alpha\in A} h_\alpha$ stands for the smallest closed subspace in $L_\mu^2$ which
contains all elements of this family.
We note that the second condition in (\ref{Nr.3.2}) is equivalent to
$\bigl( {\bf 1}, \varphi_0 \bigr)_{L_\mu^2} > 0$ and 
\begin{equation}\label{Nr.3.3}
 \bigl(U_\mu^\times \varphi_{n-1},\varphi_n\bigr)_{L_\mu^2} > 0, 
 \quad n\in \{1,2,\ldots \}.
\end{equation}
In particular, since $\mu (\mathbb T)=1$ from the construction of $\varphi_0$ we see that
\begin{equation}\label{Nr.4.11}
 \varphi_0 = \boldsymbol{1}.
\end{equation}

We consider the case that the system $(\varphi_n)_{n=0}^\infty$ is non--complete in the space
$L_\mu^2$. In this case the question arises as to the existence of a natural completion of this
system to a complete orthonormal system in $L_\mu^2$. The main goal of this section is to construct
such a natural completion.

With a view to the Lebesgue decomposition \eqref{Nr.3.4}, where $\mu_s$ stands for the singular part of $\mu$ 
with respect to $m$, it is well known (see, e.g., Rosenblum/Rovnyak \cite[Chapter~4]{RR}) 
that the system of polynomials (\ref{Nr.3.1}) is non--complete in $L_\mu^2$ if and only if 
the Szeg\H{o} condition \eqref{Nr.3.4a} is satisfied. 
In particular, in the case studied below, the Szeg\H{o} function $D:\mathbb D\to\mathbb C$ 
which is given by \eqref{Nr.3.5} is well-defined.

\begin{rem}\label{3.1}
Denote by $E_{\mu_s}$ a support of the measure $\mu_s$ on $\mathbb T$ and denote by $E'$ the set of all
points of $\mathbb T$, where the function $D$ does not have non--tangential boundary values.
Let $E:= E_{\mu_s}\cup E'$. Then $m(E) = 0$. Thus $($see Hoffman \cite[Exercise~3 in Chapter~4]{H}$)$,
we can choose a boundary function $\underline{D}$ of $D$ which satisfies
\[
\underline{D} (t) = 0,\quad t\in E,
\vspace{-2mm} \]
and \vspace{-2mm}
\[
\underline{D} (t)\ne 0,\quad t\in\mathbb T\setminus E.
\]
\end{rem}

To construct the above-mentioned completion we consider a simple unitary colligation
$\bigtriangleup_\mu$ of type (\ref{Nr.2.2}) associated with the measure $\mu$. Here the
completion of the orthonormal system (\ref{Nr.3.1}) of polynomials to a complete orthonormal system
in $L_\mu^2$ will be determined in a natural way by properties of the operator $T_\mu$.

We note that the controllability space (cf. \eqref{Nr.d1.5c}) associated with the unitary colligation 
$\bigtriangleup_\mu$ has the form
\begin{equation*}
\gH_{\mu,\gF} = \bigvee_{n=0}^\infty (T_\mu)^n F_\mu (1).
\end{equation*}
Let the sequence of functions $(\varphi'_k)_{k=1}^\infty$ be defined by
\begin{equation}\label{Nr.3.9}
\varphi'_k := T_\mu^{k-1}  F_\mu (1), \quad k\in\{1,2,\ldots \}.
\end{equation}
In view of the formula
\begin{equation}\label{Nr.3.8}
 \bigvee_{k=0}^n (U_\mu^\times)^k {\bf 1} 
 = \left( \bigvee_{k=0}^{n-1} (T_\mu)^k F_\mu (1)\right) \oplus \mathbb C_\mathbb T, 
 \quad n\in\{1,2,\ldots \},
\end{equation}
it can be seen that the sequence $(\varphi_k)_{k=1}^\infty$ can
be obtained by applying the Gram-Schmidt orthonormalization procedure to 
$(\varphi'_k)_{k=1}^\infty$ with additional consideration of the normalization 
condition (\ref{Nr.3.3}). Thus, we obtain the following result.

\begin{thm}\label{3.2}
 The system $(\varphi_k)_{k=1}^\infty$ of orthonormal polynomials is a basis in the space 
 $\gH_{\mu, \gF}$. This system can be obtained by applying the Gram-Schmidt
 orthogonalization procedure to the sequence \eqref{Nr.3.9}, taking into account the normalization 
 condition \eqref{Nr.3.3}.
\end{thm}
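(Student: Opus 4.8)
The plan is to establish the two assertions of the theorem by directly comparing the Gram--Schmidt data in $L_\mu^2$ with the Gram--Schmidt data in $\gH_{\mu,\gF}$, using the orthogonal decomposition $L_\mu^2 = \gH_\mu\oplus\mathbb C_\mathbb T$ together with formula~\eqref{Nr.3.8} as the bridge between the two pictures. First I would observe that the functions $(\varphi_k)_{k=1}^\infty$ already belong to $\gH_{\mu,\gF}$; this is because, by \eqref{Nr.4.11}, we have $\varphi_0=\boldsymbol 1\in\mathbb C_\mathbb T$, and then \eqref{Nr.3.8} shows that the orthonormal system $\{\varphi_0,\varphi_1,\dots,\varphi_n\}$ spans the same space as $\left(\bigvee_{k=0}^{n-1}(T_\mu)^kF_\mu(1)\right)\oplus\mathbb C_\mathbb T$. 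Since $\varphi_0=\boldsymbol 1$ exhausts the $\mathbb C_\mathbb T$ summand and the remaining $\varphi_1,\dots,\varphi_n$ are orthogonal to $\varphi_0$, they must lie in the orthogonal complement $\bigvee_{k=0}^{n-1}(T_\mu)^kF_\mu(1)$, which is contained in $\gH_{\mu,\gF}$.

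Next I would verify that the closed linear span of $(\varphi_k)_{k=1}^\infty$ is all of $\gH_{\mu,\gF}$, i.e. that the system is complete in that subspace. Taking the union over $n$ in the span identity from \eqref{Nr.3.8}, the set $\bigvee_{k=1}^\infty\varphi_k$ coincides with $\bigvee_{k=0}^\infty(T_\mu)^kF_\mu(1)=\gH_{\mu,\gF}$, where the last equality is precisely the stated form of the controllability space. Combined with the previous paragraph and the orthonormality of the $\varphi_k$ (which they inherit from being a Gram--Schmidt output in $L_\mu^2$), this gives that $(\varphi_k)_{k=1}^\infty$ is an orthonormal basis of $\gH_{\mu,\gF}$, which is the first assertion.

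For the second assertion I would show that running Gram--Schmidt on the sequence $(\varphi'_k)_{k=1}^\infty$ defined in \eqref{Nr.3.9} reproduces exactly the $\varphi_k$. The key point is that, by the definition $\varphi'_k=T_\mu^{k-1}F_\mu(1)$, the partial spans satisfy $\bigvee_{j=1}^k\varphi'_j=\bigvee_{j=0}^{k-1}(T_\mu)^jF_\mu(1)$; comparing this with the complement of $\mathbb C_\mathbb T$ inside the left-hand side of \eqref{Nr.3.8} shows that the nested subspaces generated by $(\varphi'_k)$ agree, for every $k$, with the nested subspaces generated by $(\varphi_k)$ inside $\gH_{\mu,\gF}$. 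Since Gram--Schmidt produces a unit vector orthogonal to the preceding span at each stage and is uniquely determined once an orientation (sign) is fixed, the only remaining task is to match the normalization: the positivity condition \eqref{Nr.3.3} pins down the phase of each $\varphi_k$, so I must check that the sign forced by \eqref{Nr.3.3} is consistent with applying Gram--Schmidt to $(\varphi'_k)$ under that same normalization convention.

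I expect the main obstacle to be this last sign/normalization bookkeeping, in particular translating the orthogonalization condition \eqref{Nr.3.2}, originally stated for the shifted system $\{(U_\mu^\times)^k\boldsymbol 1\}$ in $L_\mu^2$, into the corresponding positivity condition for the system $(\varphi'_k)$ in $\gH_{\mu,\gF}$, and confirming that the passage through $F_\mu=F^\times\tau$ and the compression $T_\mu=T^\times$ does not introduce an unwanted phase. The span-level identities are essentially immediate from \eqref{Nr.3.8}, so the conceptual content is light; the care required lies entirely in ensuring that the uniqueness of the Gram--Schmidt output (up to phase) is matched by the uniqueness of the normalization in \eqref{Nr.3.3}, thereby identifying the two systems term by term rather than merely up to unimodular factors.
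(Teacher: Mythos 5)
Your argument is correct and is essentially the paper's own: there the theorem is deduced directly from the span identity \eqref{Nr.3.8}, exactly as you do, with $\varphi_0=\boldsymbol{1}$ exhausting the $\mathbb C_\mathbb T$ summand so that $\varphi_1,\dots,\varphi_n$ form an orthonormal basis of $\bigvee_{k=0}^{n-1}T_\mu^k F_\mu(1)$, and hence $(\varphi_k)_{k=1}^\infty$ one of $\gH_{\mu,\gF}$. The phase-matching you flag as the remaining obstacle is actually vacuous: the normalization imposed in the theorem is \eqref{Nr.3.3} itself (already noted in the paper to be equivalent to the second condition in \eqref{Nr.3.2}), so once the nested spans of $(\varphi'_k)$ and $(\varphi_k)_{k\geq 1}$ coincide, the Gram--Schmidt outputs agree term by term with no further translation through $F_\mu=F^{\times}\tau$ needed.
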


\begin{cor}\label{3.3}
 The orthonormal system of polynomials $(\varphi_k)_{k=0}^\infty$ is non--complete in $L_\mu^2$  
 if and only if $\gH_\mu\ominus \gH_{\mu,\gF}\ne \{0\}$.
\end{cor}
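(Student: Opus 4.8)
The plan is to combine the two structural facts already in hand---that $\varphi_0={\bf 1}$ by \eqref{Nr.4.11} and that Theorem~\ref{3.2} identifies the closed span of $(\varphi_k)_{k=1}^\infty$---in order to compute the closed linear span of the full system $(\varphi_k)_{k=0}^\infty$ explicitly. First I would note that the span of the single function $\varphi_0={\bf 1}$ is exactly $\mathbb C_\mathbb T$. Next, Theorem~\ref{3.2} asserts that the truncated system $(\varphi_k)_{k=1}^\infty$ is a basis of $\gH_{\mu,\gF}$, so that its closed linear span coincides with $\gH_{\mu,\gF}$; in particular each $\varphi_k$ with $k\ge 1$ lies in $\gH_{\mu,\gF}$, as is also clear from the construction \eqref{Nr.3.9}.

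Since $(\varphi_k)_{k=0}^\infty$ is an orthonormal system and $\mathbb C_\mathbb T$ is orthogonal to $\gH_\mu\supseteq\gH_{\mu,\gF}$, the closed linear span of the entire system splits as the orthogonal direct sum
\[
 \bigvee_{k=0}^\infty \varphi_k = \mathbb C_\mathbb T\oplus\gH_{\mu,\gF}.
\]
The only points needed here are that $\varphi_0\in\mathbb C_\mathbb T$ and $\varphi_k\in\gH_{\mu,\gF}$ for $k\ge 1$, together with the orthogonality $\mathbb C_\mathbb T\perp\gH_\mu$.

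To finish, I would recall that non-completeness of $(\varphi_k)_{k=0}^\infty$ in $L_\mu^2$ means precisely that $\bigvee_{k=0}^\infty \varphi_k\ne L_\mu^2$. Using the decomposition $L_\mu^2=\mathbb C_\mathbb T\oplus\gH_\mu$ and the displayed identity, completeness is equivalent to $\mathbb C_\mathbb T\oplus\gH_{\mu,\gF}=\mathbb C_\mathbb T\oplus\gH_\mu$, that is, to $\gH_{\mu,\gF}=\gH_\mu$; taking orthogonal complements in $\gH_\mu$, this reads $\gH_\mu\ominus\gH_{\mu,\gF}=\{0\}$. Negating both sides yields the asserted equivalence.

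This argument is essentially bookkeeping, and I do not expect a genuine obstacle: the analytic content---identifying the closed span of the polynomials $(\varphi_k)_{k=1}^\infty$ with the controllability space $\gH_{\mu,\gF}$---has already been carried out in Theorem~\ref{3.2}. The sole step requiring mild care is the orthogonal-sum decomposition of the closed span, which rests only on the orthonormality of the full system and on $\mathbb C_\mathbb T\perp\gH_\mu$.
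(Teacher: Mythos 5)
Your argument is correct and coincides with the paper's intended derivation: Corollary~\ref{3.3} is stated there as an immediate consequence of Theorem~\ref{3.2} together with $\varphi_0={\bf 1}$ and the orthogonal decomposition $L_\mu^2=\gH_\mu\oplus\mathbb C_\mathbb T$ (compare also \eqref{Nr.3.8}), which is exactly the bookkeeping you carry out. No gaps; nothing further is needed.
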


If $T$ is a contraction acting on some Hilbert space $\gH$, then we set
\[ \delta_T:=\dim {\gD_T} \quad 
   \bigl(\mbox{resp.}\; \delta_{T^*}:=\dim {\gD_{T^*}}  \bigr) , \]
where $\gD_T:=\overline{D_T(\gH)}$ (resp. $\gD_{T^*}:=\overline{D_{T^*}(\gH)}_{\,}$) 
is the closure of the range of the defect operator $D_T:=\sqrt{I_\gH-T^*T}$
(resp. $D_{T^*}:=\sqrt{I_\gH-TT^*}_{\,}$).

Let $\gH_{\mu,\gF}^\perp := \gH_\mu\ominus \gH_{\mu,\gF}$ (cf. \eqref{Nr.d1.5e}). 
Then the space $\gH_{\mu,\gF}^\perp$ is invariant with respect to $T_{\mu}^*$ and moreover,
if $\gH_{\mu,\gF}^\perp\not=\{0\}$, then the restriction
\[  V_{T_{\mu}^*}:= {\rm Rstr.}_{\gH_{\mu,\gF}^\perp}T_{\mu}^* \]
is the maximal unilateral shift contained in $T_{\mu}^*$ (see \cite[Theorem~1.6]{Dub06}). 

We now suppose that $\gH_{\mu,\gF}^\perp\not=\{0\}$.
In view of $\delta_{T_\mu} = \delta_{T_\mu^\ast} =1$, 
the multiplicity of the unilateral shift $V_{T_\mu^\ast}$ is equal $1$ and coincides with $\delta_{T_\mu}$.
This is equi\-va\-lent to the operator $T_\mu$ containing a maximal unilateral shift
$V_{T_\mu}$ of multiplicity $1$ (see \cite[Remark~1.11]{Dub06}). Thus, we obtain the following result.

\begin{cor}\label{3.4}
 The orthonormal system of polynomials $(\varphi_k)_{k=0}^\infty$ is non--complete in $L_\mu^2$
 if and only if the contraction $T_\mu$ $($resp. $T_\mu^\ast)$ contains a maximal unilateral 
 shift $V_{T_\mu}$ $($resp. $V_{T_\mu^\ast})$ of multiplicity $1$.
\end{cor}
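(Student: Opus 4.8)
The plan is to assemble the statement directly from Corollary~\ref{3.3} together with the structural facts about $\gH_{\mu,\gF}^\perp$ recorded in the paragraph immediately preceding the corollary. Since those facts already carry all the operator--theoretic content, the proof reduces to bookkeeping: one translates the non--completeness condition into a statement about $\gH_{\mu,\gF}^\perp$ and then reads off the shift description.

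First I would invoke Corollary~\ref{3.3}: the system $(\varphi_k)_{k=0}^\infty$ is non--complete in $L_\mu^2$ if and only if $\gH_{\mu,\gF}^\perp=\gH_\mu\ominus\gH_{\mu,\gF}\neq\{0\}$. It therefore suffices to show that $\gH_{\mu,\gF}^\perp\neq\{0\}$ is equivalent to each of the two shift statements. For the $T_\mu^\ast$ version I would use the result cited from \cite[Theorem~1.6]{Dub06}: whenever $\gH_{\mu,\gF}^\perp\neq\{0\}$, the restriction $V_{T_\mu^\ast}={\rm Rstr.}_{\gH_{\mu,\gF}^\perp}T_\mu^\ast$ is precisely the maximal unilateral shift contained in $T_\mu^\ast$. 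Conversely, since the maximal unilateral shift of $T_\mu^\ast$ always acts on $\gH_{\mu,\gF}^\perp$, it is nontrivial only if that subspace is nonzero. Hence $\gH_{\mu,\gF}^\perp\neq\{0\}$ is equivalent to $T_\mu^\ast$ containing a nontrivial maximal unilateral shift, and by the defect--index equality $\delta_{T_\mu}=\delta_{T_\mu^\ast}=1$ its multiplicity equals $1$.

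Finally I would pass to the $T_\mu$ formulation using \cite[Remark~1.11]{Dub06} together with $\delta_{T_\mu}=\delta_{T_\mu^\ast}=1$: the presence of a multiplicity--one maximal unilateral shift in $T_\mu^\ast$ is equivalent to the presence of a multiplicity--one maximal unilateral shift in $T_\mu$. Chaining the three equivalences then yields the corollary in both its ``resp.'' forms.

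I do not expect a genuine obstacle here, since the substantive work is already encapsulated in the cited results of \cite{Dub06}; the only point requiring care is the bookkeeping around multiplicities. Specifically, one must confirm that the multiplicity of $V_{T_\mu^\ast}$ is the defect number $\delta_{T_\mu}$ rather than $\delta_{T_\mu^\ast}$, and that the $T_\mu$ and $T_\mu^\ast$ statements coincide precisely because of the symmetry $\delta_{T_\mu}=\delta_{T_\mu^\ast}$. Once this matching is made explicit, the equivalence follows immediately.
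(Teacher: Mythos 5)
Your proposal is correct and follows essentially the same route as the paper: Corollary~\ref{3.3} to reduce non--completeness to $\gH_{\mu,\gF}^\perp\neq\{0\}$, then \cite[Theorem~1.6]{Dub06} for the $T_\mu^\ast$ statement (with multiplicity $1$ coming from $\delta_{T_\mu}=\delta_{T_\mu^\ast}=1$), and \cite[Remark~1.11]{Dub06} to pass to the $T_\mu$ formulation. Your explicit remark that the converse uses maximality (any unilateral shift in $T_\mu^\ast$ lives inside $\gH_{\mu,\gF}^\perp$) is a point the paper leaves implicit, but it is the same argument.
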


We consider the orthogonal decomposition
\begin{equation}\label{Nr.67}
 \gH_\mu = \gH_{\mu,\gF} \oplus \gH_{\mu,\gF}^\perp.
\end{equation}
Denote by $\tilde{\gL}_0$ the wandering subspace which generates the subspace associated with the unilateral shift 
$V_{T_\mu^\ast}$. Then $\dim \tilde{\gL}_0 = 1$ and since $V_{T_\mu^\ast}$ is an isometric operator we have
\begin{equation}\label{Nr.3.10}
 V_{T_\mu^\ast} = {\rm Rstr.}_{\gH_{\mu,\gF}^\perp} (U_\mu^\times)^\ast.
\end{equation}
Consequently,
\begin{equation}\label{Nr.3.11}
 \gH_{\mu,\gF}^\perp 
 = \bigoplus\limits_{n=0}^\infty V_{T_\mu^\ast}^n (\tilde{\gL}_0)
 = \bigvee_{n=0}^\infty (T_\mu^\ast)^n (\tilde{\gL}_0)
 = \bigvee_{n=0}^\infty [ (U_\mu^\times)^\ast]^n (\tilde{\gL}_0).
\end{equation}
There exists (see \cite[Corollary~1.10]{Dub06}) a unique unit vector $\psi_1\in\tilde{\gL}_0$ which fulfills
\begin{equation}\label{Nr.3.12}
 \bigl( G_{\mu}^\ast (1), \psi_1 \bigr)_{L_\mu^2} > 0.
\end{equation}
Because of (\ref{Nr.3.10}), (\ref{Nr.3.11}), and (\ref{Nr.3.12}) it follows that the
sequence $(\psi_k)_{k=1}^\infty$, where
\begin{equation}\label{Nr.3.13}
 \psi_k := [(U_\mu^\times)^\ast]^{k-1} \psi_1,
 \quad k\in\{1,2,\ldots\},
\end{equation}
is the unique orthonormal basis of the space $\gH_{\mu,\gF}^\perp$ which satisfies the conditions
\begin{equation}\label{Nr.3.14}
 \bigl( G_{\mu}^\ast (1), \psi_1 \bigr)_{L_\mu^2} > 0,
 \quad \psi_{k+1} = (U_\mu^\times)^\ast \psi_k,
 \quad k\in \{1,2,\ldots\},
\end{equation}
or equivalently
\begin{equation}\label{Nr.3.15}
 \bigl( G_{\mu}^\ast (1), \psi_1 \bigr)_{L_\mu^2} > 0,
 \quad \psi_{k+1} (t) = t^k\cdot \psi_1 (t),
 \; t\in\mathbb T, \quad k\in \{1,2,\ldots\}.
\end{equation}
As in the paper \cite{Dub06} we introduce the following notion.

\begin{defn}\label{3.5}
 The constructed orthonormal basis
\begin{equation}\label{Nr.3.16}
 \varphi_0, \varphi_1, \varphi_2, \ldots ;\;  \psi_1, \psi_2, \ldots
\end{equation}
 in the space $L_\mu^2$ which satisfies the conditions \eqref{Nr.3.2} and \eqref{Nr.3.14} is called the
 cano\-ni\-cal orthonormal basis in $L_\mu^2$.
\end{defn}

Obviously, the canonical orthonormal basis \eqref{Nr.3.16} in $L_\mu^2$ is uniquely determined
by the conditions \eqref{Nr.3.2} and \eqref{Nr.3.14}. 
Here the sequence $(\varphi_k)_{k=0}^\infty$ is an orthonormal system of
polynomials (depending on $t^{-1}$). The orthonormal system $(\psi_k)_{k=1}^\infty$ is
built with the aid of the operator $U_\mu^\times$ from the function
$\psi_1$ $($see \eqref{Nr.3.13}$)$ in similar to the way the system
$(\varphi_k)_{k=0}^\infty$ was built from the function $\varphi_0$
$($see \eqref{Nr.3.1} and \eqref{Nr.3.2}$)$. The only difference is that the system
$\left( [(U_\mu^\times)^\ast ]^k \psi_1 \right)_{k=0}^\infty$ is
orthonormal, whereas in the general case the system $\left(
(U_\mu^\times)^k \varphi_0\right)_{k=0}^\infty$ is not
orthonormal. In this respect the sequence $(\psi_k)_{k=1}^\infty$
can be considered as a natural completion of the system of
orthonormal polynomials $(\varphi_k)_{k=0}^\infty$ to an orthonormal
basis in $L_\mu^2$.

It should be mentioned that the part $(\psi_k)_{k=1}^\infty$ of (\ref{Nr.3.16}) has a
clear interpretation in terms of prediction theory of stationary
sequences. A closer look at the papers Wiener/Masani \cite{WM1} and \cite{WM2}
shows that $(\psi_k)_{k=1}^\infty$ is the spectral
image of the sequence of so-called normalized innovations
corresponding to some stationa\-ry sequence which is naturally
associated with $L_\mu^2$. We hope to discuss this and
related matters in a separate paper dedicated to a
prediction-theoretical analysis of pseudocontinuability of Schur
functions.

\begin{rem}\label{3.6}
The orthonormal system
\begin{equation}\label{Nr.3.16b}
\varphi_1,\varphi_2,\ldots ;\;  \psi_1, \psi_2,\ldots
\end{equation}
is an orthonormal basis in the space $\gH_\mu$, which takes the orthogonal
decomposition of $\gH_\mu$ into account. We will call it
the canonical orthonormal basis in $\gH_\mu$. 
\end{rem}

In view of (\ref{Nr.3.15}), to obtain a description of the sequence $(\psi_k)_{k=1}^\infty$ 
it suffices to determine $\psi_1$. The above considerations lead us to the following result.

\begin{lem}\label{3.7}
 The function $\psi_1\in L_\mu^2$ is completely characterized by the following four conditions:
\begin{enumerate}
\item[(a)] 
 $\psi_1\perp \gH_{\mu,\gF}$. 
\item[(b)] 
 $\|\psi_1\| = 1$. 
\item[(c)] 
 $\bigl((V_{T_\mu^\ast})^k\psi_1\bigr)_{k=0}^\infty$ is an orthonormal basis
 in $\gH_{\mu,\gF}^\perp$. 
\item[(d)] 
 $\bigl( G_\mu^\ast (1), \psi_1 \bigr)_{L_\mu^2} > 0$.
\end{enumerate}
\end{lem}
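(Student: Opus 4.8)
The plan is to prove the two implications behind the word \emph{characterized}: first, that the vector $\psi_1$ produced in \eqref{Nr.3.12}--\eqref{Nr.3.14} satisfies (a)--(d); and second, that these four conditions admit no other solution, so any $\psi\in L_\mu^2$ fulfilling (a)--(d) must coincide with $\psi_1$. The first part I would simply read off the construction preceding Definition~\ref{3.5}. Indeed $\psi_1\in\tilde\gL_0\subseteq\gH_{\mu,\gF}^\perp$, and since $\gH_{\mu,\gF}^\perp\perp\gH_{\mu,\gF}$ this gives (a); the chosen $\psi_1$ has norm one, giving (b); by \eqref{Nr.3.10} we have $V_{T_\mu^\ast}=\mathrm{Rstr.}_{\gH_{\mu,\gF}^\perp}(U_\mu^\times)^\ast$, so $V_{T_\mu^\ast}^k\psi_1=[(U_\mu^\times)^\ast]^k\psi_1=\psi_{k+1}$, whence $(V_{T_\mu^\ast}^k\psi_1)_{k=0}^\infty=(\psi_k)_{k=1}^\infty$ is exactly the orthonormal basis of $\gH_{\mu,\gF}^\perp$ exhibited in \eqref{Nr.3.11} and \eqref{Nr.3.13}, giving (c); and (d) is precisely the normalizing inequality \eqref{Nr.3.12}.

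The substance of the lemma lies in the converse. By (a) the vector $\psi$ is orthogonal to $\gH_{\mu,\gF}$, and since condition~(c) is posed through the operator $V_{T_\mu^\ast}$ acting on $\gH_{\mu,\gF}^\perp$, we have $\psi\in\gH_{\mu,\gF}^\perp$. The key step is to exploit that $V_{T_\mu^\ast}$ is a unilateral shift of multiplicity one, with one-dimensional wandering subspace $\tilde\gL_0=\gH_{\mu,\gF}^\perp\ominus V_{T_\mu^\ast}(\gH_{\mu,\gF}^\perp)$. Since $V_{T_\mu^\ast}$ is an isometry and $(V_{T_\mu^\ast}^k\psi)_{k=0}^\infty$ is an orthonormal basis of $\gH_{\mu,\gF}^\perp$ by (c), applying $V_{T_\mu^\ast}$ maps this basis onto an orthonormal basis of its range, so $(V_{T_\mu^\ast}^k\psi)_{k=1}^\infty$ is an orthonormal basis of $V_{T_\mu^\ast}(\gH_{\mu,\gF}^\perp)$. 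As $\psi=V_{T_\mu^\ast}^0\psi$ is orthogonal to every $V_{T_\mu^\ast}^k\psi$ with $k\ge 1$, it follows that $\psi\perp V_{T_\mu^\ast}(\gH_{\mu,\gF}^\perp)$, i.e. $\psi\in\tilde\gL_0$. Because $\dim\tilde\gL_0=1$ and $\psi_1$ spans $\tilde\gL_0$, condition (b) forces $\psi=\lambda\psi_1$ with $|\lambda|=1$.

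It then remains to pin down the unimodular constant $\lambda$, and here condition (d) does the work: by sesquilinearity $(G_\mu^\ast(1),\psi)_{L_\mu^2}$ equals $\lambda$ (up to conjugation) times the positive real number $(G_\mu^\ast(1),\psi_1)_{L_\mu^2}$ furnished by \eqref{Nr.3.12}, so the positivity in (d) forces $\lambda$ to be a positive real; together with $|\lambda|=1$ this yields $\lambda=1$ and hence $\psi=\psi_1$.

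The one point deserving care -- and the step I expect to be the crux -- is the passage from the orthonormal-basis hypothesis (c) to the conclusion $\psi\in\tilde\gL_0$. This rests essentially on the shift $V_{T_\mu^\ast}$ having multiplicity one (established earlier via $\delta_{T_\mu}=\delta_{T_\mu^\ast}=1$): it is exactly the one-dimensionality of the wandering subspace that guarantees a single cyclic unit vector generating an orthonormal basis must already lie in $\tilde\gL_0$, since in higher multiplicity the orbit of one vector could never be complete.
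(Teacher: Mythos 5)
Your proof is correct and follows essentially the same route as the paper, which presents Lemma~\ref{3.7} as an immediate consequence of the preceding construction: the wandering-subspace decomposition \eqref{Nr.3.11} of the multiplicity-one shift $V_{T_\mu^\ast}$, the one-dimensionality of $\tilde{\gL}_0$, and the normalization \eqref{Nr.3.12} pinning down the unimodular constant. You have merely made explicit the details the paper leaves implicit (including the observation, also used in the paper's proof of Lemma~\ref{3.9}, that condition (c) with $k=0$ already forces $\psi\in\gH_{\mu,\gF}^\perp$), and your argument is sound.
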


\begin{rem}\label{3.8}
 In view of {\rm Theorem \ref{3.2}} condition {\rm (a)} in {\rm Lemma \ref{3.7}} is equivalent to the orthogonality
 conditions in $L_\mu^2$ which are expressed by
\[
\psi_1\perp \varphi_k,\quad k\in \{1,2, \ldots\},
\vspace{-2mm} \]
or equivalently \vspace{-2mm}
\[
\psi_1\perp e_{-k},\quad k\in \{1,2, \ldots\}.
\]
\end{rem}

Now we are going to prove another auxiliary result.
Here and in the following, for any function $f:\dT\to\dC$,
we simply write $f^{\ast}(t)$ instead of $(f(t))^{\ast}$.

\begin{lem}\label{3.9}
 Assume that $h_0\in L_\mu^2$ satisfies the following three conditions:
\begin{enumerate}
\item[($\tilde{\rm a}$)] 
 $h_0\perp \gH_{\mu,\gF}$.
\item[($\tilde{\rm b}$)] 
 $\|h_0\| = 1$.
\item[($\tilde{\rm c}$)] 
 $\bigl((V_{T_\mu^\ast})^k h_0\bigr)_{k=0}^\infty$ is an orthonormal system in $\gH_{\mu,\gF}^\perp$.
\end{enumerate}
 Suppose that $\mu$ has the Lebesgue decomposition \eqref{Nr.3.4} and let $D:\dD\to\dC$ be 
 the Szeg\H{o} function given by \eqref{Nr.3.5}. Denote by $E$ the Borel subset of $\mathbb T$
 which was introduced in {\rm Remark \ref{3.1}}. Then there is a function $I\in\cI(\dD)$ such that
 the function $h_0$ has the form
\begin{equation}\label{Nr.3.17}
 h_0 (t) = \left\{\begin{array}{cl}
  0, & t\in E, \\
 t \cdot \bigl(\underline{D}^{\ast}(t)\bigr)^{-1} \cdot \underline{I}(t) , 
     & t\in\mathbb T\setminus E.
\end{array}\right. 
\end{equation}
\end{lem}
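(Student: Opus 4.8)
The plan is to extract from the three hypotheses three successively finer pieces of information about $h_0$: its moment data against $\mu$, the modulus of its boundary values, and finally its inner--outer structure. First I would translate the orthonormality in $(\tilde{\rm c})$ into a moment condition. By \eqref{Nr.3.10} the shift $V_{T_\mu^\ast}$ acts on $\gH_{\mu,\gF}^\perp$ as multiplication by $t$, so $(V_{T_\mu^\ast})^k h_0 = e_k\,h_0$, and for $j,k\ge 0$ one has $\bigl((V_{T_\mu^\ast})^j h_0,(V_{T_\mu^\ast})^k h_0\bigr)_{L_\mu^2}=\int_\dT t^{k-j}|h_0|^2\,\mu(dt)$. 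Since this depends only on $k-j$ and must equal $\delta_{jk}$, the orthonormality together with $(\tilde{\rm b})$ forces $\int_\dT t^n|h_0|^2\,\mu(dt)=\delta_{n,0}$ for every $n\in\mathbb Z$. As a finite positive measure on $\dT$ is determined by its trigonometric moments, this says precisely that $|h_0|^2\,\mu=m$.

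Feeding this equality into the Lebesgue decomposition \eqref{Nr.3.4} and using the uniqueness of that decomposition, I would read off that $|h_0|^2 w=1$ holds $m$-a.e. and that $h_0=0$ holds $\mu_s$-a.e. Since $m(E)=0$ and $\underline{D}$ is normalized as in Remark~\ref{3.1}, this permits me to choose the representative of $h_0$ to vanish on $E$ and to satisfy $|h_0|=|\underline{D}|^{-1}$ on $\dT\setminus E$ (recall $|\underline{D}|^2=w$ by Proposition~\ref{p2.6}). This already yields the first branch of \eqref{Nr.3.17} and fixes the modulus in the second branch; only the inner factor remains to be identified.

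For the analytic part I would exploit $(\tilde{\rm a})$. Since $(\tilde{\rm c})$ places $h_0$ in $\gH_{\mu,\gF}^\perp\subseteq\gH_\mu$, Theorem~\ref{3.2} and Remark~\ref{3.8} give $h_0\perp e_{-k}$ for all $k\ge 0$, i.e. $\int_\dT t^k h_0\,\mu(dt)=0$ for $k\ge 0$. Combining this with $|h_0|^2\mu=m$ yields $h_0\,\mu=(h_0^\ast)^{-1}m$ on $\dT\setminus E$, whence the function $k_0:=(h_0^\ast)^{-1}$ lies in $L_m^2$ (its squared $L_m^2$-norm equals $\int_\dT w\,dm\le 1$) and its Fourier coefficients $\int_\dT k_0\,t^{-n}\,dm=\int_\dT t^{-n}h_0\,\mu(dt)$ vanish for all $n\le 0$. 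Thus $k_0$ is the boundary function of some $F\in\cN(\dD)$ which is holomorphic in $\dD$, satisfies $F(0)=0$, and has $|\underline{F}|=|\underline{D}|$ $m$-a.e.

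Finally I would factor $F$ by Proposition~\ref{p2.8} as $F=E_F\cdot I_1$ with $E_F$ an outer function in $\cN(\dD)$ and $I_1\in\cI(\dD)$; comparing moduli gives $|\underline{E_F}|=|\underline{D}|$ $m$-a.e., so the uniqueness of the normalized outer function (Proposition~\ref{p2.6}, together with Remark~\ref{r2.3}) forces $E_F=c\,D$ for some $c\in\dT$, hence $F=\tilde I\,D$ with $\tilde I:=c\,I_1\in\cI(\dD)$. Because $F(0)=0$ while $D(0)\ne 0$, we get $\tilde I(0)=0$, so $\tilde I=e_1\,I$ for some $I\in\cI(\dD)$. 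Back-substituting $h_0=(\underline{F}^{\ast})^{-1}$ on $\dT\setminus E$ and using $|\underline{I}|=1$ $m$-a.e. gives $h_0=e_1\,(\underline{D}^\ast)^{-1}\underline{I}$, which is exactly \eqref{Nr.3.17}. I expect the delicate point to be this last passage: controlling the interplay of the singular part $\mu_s$ with the Szeg\H{o} function so that $k_0$ genuinely belongs to $\cN(\dD)$ and the factorization $F=\tilde I\,D$ is legitimate, the normalization of $\underline{D}$ in Remark~\ref{3.1} being precisely what makes the two branches of \eqref{Nr.3.17} fit together.
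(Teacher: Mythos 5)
Your proof is correct, and its endgame coincides with the paper's: both identify $h_0|\underline{D}|^2$ (your $k_0=(h_0^{\ast})^{-1}$; the paper works with $e_{-1}h_0|\underline{D}|^2$) as the boundary function of an $H^2(\dD)$ function of modulus $|\underline{D}|$, and then apply the Smirnov factorization of Proposition~\ref{p2.8} together with uniqueness of the outer part; whether the factor $t$ is split off at the start or recovered at the end from $F(0)=0$ is immaterial. The genuine difference is in how the singular part is neutralized. The paper deduces from condition $(\tilde{\rm a})$ that $\int_{\dT}t^{k}h_0\,\mu(dt)=0$ for $k\ge 0$ and then invokes the F.~and M.~Riesz theorem to conclude that $\bar{t}h_0\,\mu$ is absolutely continuous with respect to $m$, hence $h_0=0$ $\mu_s$-a.e.; only afterwards does it use $(\tilde{\rm b})$ and $(\tilde{\rm c})$ to get $|h_0|^2|\underline{D}|^2=1$ $m$-a.e. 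You instead observe that $(\tilde{\rm b})$ and $(\tilde{\rm c})$ alone force the positive measure $|h_0|^2\mu$ to have trigonometric moments $\delta_{n,0}$, hence to equal $m$, and uniqueness of the Lebesgue decomposition then yields $h_0=0$ $\mu_s$-a.e. and $|h_0|^2w=1$ simultaneously. This trades the Riesz brothers' theorem for the elementary fact that a finite positive measure on $\dT$ is determined by its moments --- a real simplification --- and it makes transparent that the vanishing of $h_0$ on $E_{\mu_s}$ needs only $(\tilde{\rm b})$, $(\tilde{\rm c})$, while $(\tilde{\rm a})$ enters solely through the one-sided Fourier condition that makes $k_0$ an $H^2$-function vanishing at the origin. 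Incidentally, your orientation of that Fourier condition is the consistent one: the paper's \eqref{Nr.3.21} asserts membership in $(L_{m,+}^2)^\perp$, whereas what its computation shows, and what its factorization step actually uses, is membership in $L_{m,+}^2$ itself.
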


\begin{proof}
In view of Remark \ref{3.8} from condition ($\tilde{\rm a}$) we infer
\begin{equation} \label{nr.f1}
\int_\mathbb T t^k h_0 (t) \,\mu (dt) 
 = \bigl(h_0, e_{-k}\bigr)_{L_{\mu}^2} 
 = 0, \quad k\in \{1,2, \ldots\}.
\end{equation}
Moreover, from ($\tilde{\rm c}$) and $(V_{T_\mu^\ast})^0 h_0=h_0$ we also have $h_0\in\gH_{\mu,\gF}^\perp$.
Thus, in view of \eqref{Nr.67} we get $h_0\in\gH_{\mu}$. This yields $h_0\perp \dC_{\dT}$.
Therefore, it follows that
\begin{equation} \label{nr.f2}
 \int_\mathbb T t^0 h_0 (t) \,\mu (dt) 
 = \bigl( h_0, e_{0}\bigr)_{L_{\mu}^2} = 0.
\end{equation}
By using the Riesz brothers' theorem (see, e.g., Hoffman \cite[Chapter~4]{H}) we obtain from \eqref{nr.f1} and \eqref{nr.f2}
we obtain that the complex measure $\overline{t}h_0 (t) \mu (dt)$ is absolutely continuous with
respect to $m$. According to the decomposition (\ref{Nr.3.4}) this implies
\begin{equation}\label{Nr.3.18}
h_0 (t) = 0,\quad t\in E_{\mu_s}.
\end{equation}
The function $h_0$ is determined $m$-a.e. on the set $\mathbb T\setminus E_{\mu_s}$. 
For this reason, taking into account Remark \ref{3.1} and \eqref{Nr.3.18}, we can assume that 
\begin{equation}\label{Nr.3.18b}
 h_0 (t) = 0,\quad t\in E.
\end{equation}
Because of (\ref{Nr.3.4}), (\ref{Nr.3.18}), and the condition ($\tilde{\rm c}$) for each 
$n\in \{1,2,\ldots\}$ it follows
\begin{equation*}
 \int_\mathbb T t^n | h_0 (t)|^2 |\underline{D} (t)|^2 \,m (dt) 
 = \int_\mathbb T t^n |h_0 (t)|^2 \,\mu (dt) 
 = \bigl( (V_{T_{\mu}^\ast})^n h_0, h_0 \bigr)_{L_{\mu}^2} = 0.
\end{equation*} 
Consequently, taking complex conjugates for each non--zero integer $n$, we get
\begin{equation*}
 \int_\mathbb T t^n | h_0 (t)|^2 |\underline{D} (t)|^2 \,m (dt) = 0.
\end{equation*}
Hence, there is a constant $c\in\mathbb C$ such that the identity
\begin{equation}\label{Nr.3.19}
 |h_0|^2 |\underline{D}|^2  = c
\end{equation}
holds $m$-a.e. on $\mathbb T$. Combining ($\tilde{\rm b}$) with
(\ref{Nr.3.4}), (\ref{Nr.3.18}), and (\ref{Nr.3.19}) we can conclude 
\[ 1 = \| h_0 \|^2 = \int_\mathbb T |h_0(t)|^2 \,\mu(dt) 
     = \int_\mathbb T |h_0(t)|^2 |\underline{D}(t)|^2 \,m(dt) = c. \]
Therefore, from (\ref{Nr.3.19}) it follows that
\begin{equation}\label{Nr.3.20}
 |h_0| = |\underline{D}|^{-1} 
\end{equation}
$m$-a.e. on $\mathbb T\setminus E$. From (\ref{Nr.3.20}) and Proposition~\ref{p2.6} we see that 
$e_{-1} h_0 |\underline{D}|^2\in L_m^2$. Using (\ref{Nr.3.4}), (\ref{Nr.3.18}), \eqref{nr.f1}, and \eqref{nr.f2}
we obtain
\begin{equation*}
 \int_\mathbb T t^n t^{-1} h_0 (t) |\underline{D} (t)|^2 \,m (dt) 
 = \int_\mathbb T t^{n-1} h_0 (t) \,\mu (dt) = 0,
 \quad n\in\{1,2,\ldots\}.
\end{equation*}
Thus, by setting $L_{m,+}^2 := \bigvee_{n=0}^\infty e_n$ we get
\begin{equation}\label{Nr.3.21}
 e_{-1}\cdot h_0\cdot |\underline{D}|^2\in (L_{m,+}^2)^\perp.
\end{equation}
In view of (\ref{Nr.3.20}), the identity
\begin{equation}\label{Nr.3.22}
 \bigl| e_{-1} \cdot h_0 \cdot |\underline{D}|^2 \bigr| = |\underline{D}|
\end{equation}
holds $m$-a.e. on $\mathbb T$. Since $D$ is an outer function in $H^2 (\mathbb D)$, it follows from
(\ref{Nr.3.21}), (\ref{Nr.3.22}) and the inner-outer factorization (see Proposition~\ref{p2.8}) that the function $e_{-1} h_0 |\underline{D}|^2$ admits the representation 
\begin{equation*}
 e_{-1} \cdot h_0 \cdot |\underline{D} |^2 = \underline{D}\cdot \underline{I} 
\end{equation*}
$m$-a.e. on $\dT$ for some $I\in\cI(\dD)$. Hence, the identity
\begin{equation*}
 e_{-1} \cdot h_0 \cdot \underline{D}^{\ast} = \underline{I} 
\end{equation*}
holds $m$-a.e. on $\dT$. Combining this with (\ref{Nr.3.18b}) we get the representation (\ref{Nr.3.17}).
\end{proof}

\begin{thm}\label{3.10}
 Let $D:\dD\to\dC$ be the Szeg\H{o} function given by \eqref{Nr.3.5}. Denote by $E$ the Borel subset 
 of $\mathbb T$ which was introduced in {\rm Remark \ref{3.1}}. Then the unit vector $\psi_1\in\tilde{\gL}_0$ 
 which is uniquely determined via \eqref{Nr.3.12} is given by
\begin{equation*}
 \psi_1 (t) = \left\{ \begin{array}{cl}
  0, & t\in E, \\
  t\cdot \bigl(\underline{D}^{\ast}(t)\bigr)^{-1},  & t\in\mathbb T\setminus E .
\end{array}\right.
\end{equation*}
\end{thm}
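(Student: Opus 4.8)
The plan is to recognize Theorem \ref{3.10} as essentially a corollary of Lemma \ref{3.9}, with the only genuine work being to show that the inner function $I$ produced there is forced to be a unimodular constant. First I would observe that the vector $\psi_1$ satisfies conditions (a), (b), (c), (d) of Lemma \ref{3.7}, which are precisely conditions ($\tilde{\rm a}$), ($\tilde{\rm b}$), ($\tilde{\rm c}$) of Lemma \ref{3.9} together with the extra positivity normalization (d). Hence Lemma \ref{3.9} applies directly and tells us that there exists some $I\in\cI(\dD)$ with
\[
 \psi_1 (t) = \left\{ \begin{array}{cl}
  0, & t\in E, \\
  t \cdot \bigl(\underline{D}^{\ast}(t)\bigr)^{-1} \cdot \underline{I}(t), & t\in\mathbb T\setminus E .
\end{array}\right.
\]
So the entire content of the theorem reduces to proving that $\underline{I}$ is (m-a.e.) a fixed unimodular constant, and then that the positivity condition \eqref{Nr.3.12} pins that constant to be $1$.

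The key extra ingredient, which Lemma \ref{3.9} does not exploit, is that for $\psi_1$ the system $\bigl((V_{T_\mu^\ast})^k\psi_1\bigr)_{k=0}^\infty$ is not merely orthonormal but is a \emph{basis} of $\gH_{\mu,\gF}^\perp$; this is condition (c) of Lemma \ref{3.7} in its full strength. My plan is to use this completeness to rule out any nonconstant inner factor. Concretely, suppose $I$ were a nonconstant inner function. Then I would try to produce a nonzero vector $g\in\gH_{\mu,\gF}^\perp$ orthogonal to every $(V_{T_\mu^\ast})^k\psi_1$, contradicting completeness. Using the explicit shift action \eqref{Nr.3.15}, orthogonality of $g$ to all $(V_{T_\mu^\ast})^k\psi_1 = e_{k}\psi_1$ translates, after multiplying through by $|\underline{D}|^2$ and passing to the measure $m$ via \eqref{Nr.3.4}, into a Hardy-space orthogonality condition involving the factor $\underline{I}$. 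Because $I$ is inner, $\underline{I}H^2$ is a proper closed subspace of $H^2$, so there is room for such a $g$; this is the mechanism by which a nontrivial $I$ destroys completeness.

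The main obstacle I expect is making this completeness argument rigorous while keeping track of the weight $|\underline{D}|^{-1}$ and the vanishing set $E$. The natural route is to set up the unitary (weighted) identification between $\gH_{\mu,\gF}^\perp$ (with the $\mu$ inner product) and a subspace of $L^2_m$, under which $V_{T_\mu^\ast}$ becomes multiplication by $e_1 = t$, and under which $\psi_1$ corresponds, via \eqref{Nr.3.20} and the representation above, to a function of modulus one times $\underline{I}$. Completeness of $(e_k\psi_1)_{k\ge 0}$ then says that $\underline{I}$ generates all of the relevant $H^2$ under multiplication by $e_1$, i.e. that the shift-invariant subspace generated by $\underline{I}$ is everything; by Beurling's theorem this forces $I$ to be a unimodular constant. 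Thus $\underline{I}$ is constant $m$-a.e., and feeding this back into condition \eqref{Nr.3.12} — which after the same computation reads $\bigl(G_\mu^\ast(1),\psi_1\bigr)_{L^2_\mu} = \underline{I}\cdot(\text{positive number}) > 0$ — forces $\underline{I}=1$. Substituting $\underline{I}\equiv 1$ into the formula from Lemma \ref{3.9} yields exactly the claimed expression for $\psi_1$.
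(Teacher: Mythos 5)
Your proposal is correct and is essentially the paper's own argument: the paper likewise reduces to Lemma~\ref{3.9}, rules out a nonconstant inner factor $I$ by exactly the contradiction mechanism you describe (Beurling's theorem yields $u\in L_{m,+}^2\setminus\{0\}$ orthogonal to $(e_k\underline{I})_{k\geq 0}$, and $g:=\psi_1\cdot\underline{I}^{\ast}\cdot u$ is shown to lie in $\gH_{\mu,\gF}^{\perp}$ and to be orthogonal to every $(V_{T_\mu^{\ast}})^k\psi_1$, so the basis property forces $g=0$ and hence $u=0$), and then computes $\bigl(G_\mu^{\ast}(1),\psi_1\bigr)_{L_\mu^2}=e^{i\alpha}\,D(0)$ with $D(0)>0$ to pin the constant to $1$. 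One small caution about your alternative ``weighted identification'' packaging: invoking Corollary~\ref{3.12} there would be circular (in the paper it is a consequence of Theorem~\ref{3.10}), so to make that variant literal you must first prove directly, by the F.~and M.~Riesz and outer-function computations already used in Lemma~\ref{3.9}, that the image of $\gH_{\mu,\gF}^{\perp}$ is precisely $e_1 H^2$ (note the extra factor $e_1$, without which Beurling gives no contradiction) --- otherwise fall back on the explicit $g$-construction, as the paper does.
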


\begin{proof}
The conditions (a), (b), and (c) in Lemma \ref{3.7} lead in combination with Lemma~\ref{3.9} to the relation
\begin{equation}\label{Nr.3.23}
\psi_1 (t) = \left\{ \begin{array}{cl} 
  0, & t\in E, \\
  t\cdot \bigl(\underline{D}^{\ast}(t)\bigr)^{-1} \cdot \underline{I} (t), & t\in\mathbb T\setminus E .
\end{array}\right.
\end{equation}
Because of condition (c) in Lemma \ref{3.7}, the sequence
$\bigl((V_{T_\mu^\ast})^k\psi_1\bigr)_{k=0}^\infty$ is an orthonormal basis in $\gH_{\mu,\gF}^\perp$, 
where $(V_{T_\mu^\ast})^k \psi_1 = e_k\psi_1$,\, $k\in \{0,1,2,\ldots\}$. Now we are going
to prove that this implies that the function $I$ is constant with
unimodular value. We prove this  by contradiction. Assume that $I$
is not a unimodular constant function. Then Beur\-ling's Theorem
(see, e.g., Garnett \cite[Theorem 7.1 in Chapter~2]{G} implies that the system 
$\{ e_0\underline{I}, e_1\underline{I}, e_2\underline{I},\ldots\}$ is not closed in $L_{m,+}^2$. 
Thus, there exists an element $u\in L_{m,+}^2\setminus \{0\}$ which is orthogonal in $L_{m,+}^2$ to
the sequence $(e_n\underline{I})_{n=0}^\infty$. Now we show that the function
\begin{equation}\label{Nr.3.24}
g := \psi_1 \cdot \underline{I}^{\ast} \cdot u
\end{equation}
has the following properties
\begin{equation}\label{Nr.3.25}
g \in \gH_{\mu,\gF},
\end{equation}
\begin{equation}\label{Nr.3.26}
g\perp (V_{T_\mu^\ast})^n \psi_1, \quad n\in \{0,1,2,\ldots\}.
\end{equation}
Indeed, since the functions $u$ and $\underline{D}$ belong to $L_{m,+}^2$ from (\ref{Nr.3.24}), (\ref{Nr.3.23}), 
and the fact that $I\in\cI(\dD)$ for each $k\in \{0,1,2,\ldots \}$ we obtain
\begin{eqnarray*}
 \bigl(g, e_{-k}\bigr)_{L_\mu^2} 
 &=& \int_\mathbb T t^k \psi_1 (t) \underline{I}^{\ast}(t) u (t) \,\mu (dt) \\
 &=& \int_\mathbb T t^k t \bigl(\underline{D}^{\ast} (t)\bigr)^{-1} \underline{I} (t) \underline{I}^{\ast}(t)
     u (t) \,\mu (dt) \\
 &=& \int_\mathbb T t^{k+1} \bigl(\underline{D}^{\ast} (t)\bigr)^{-1} u (t) \,\mu (dt) \\
 &=& \int_\mathbb T t^{k+1} u (t) \underline{D} (t) \,m(dt) \,=\, 0.
\end{eqnarray*}
This implies (\ref{Nr.3.25}). Moreover, taking into account (\ref{Nr.3.24}), (\ref{Nr.3.23}), and the
choice of $u$ for each $k\in \{0,1,2,\ldots\}$ one can conclude 
\begin{eqnarray*}
 \bigl(g, (V_{T_\mu^\ast})^k \psi_1\bigr)_{L_\mu^2} 
 &=& \bigl(g, e_k\psi_1\bigr)_{L_\mu^2} 
 \;=\;\int_\mathbb T g(t) \bigl(t^k\psi_1 (t)\bigr)^{\ast} \,\mu (dt) \\
 &=& \int_\mathbb T g(t)  \bigl(t^k\psi_1 (t)\bigr)^{\ast} |\underline{D} (t)|^2 \,m (dt) \\
 &=& \int_\mathbb T u(t)  \bigl(\underline{I}(t) t^k\bigr)^{\ast} |\psi_1(t)|^2 |\underline{D}(t)|^2 \,m(dt) \\ 
 &=& \int_\mathbb T u(t)  \bigl(\underline{I}(t) t^k\bigr)^{\ast}
 \left| t \bigl(\underline{D}^{\ast} (t)\bigr)^{-1} \underline{I} (t)\right |^2 |\underline{D} (t)|^2 \,m(dt) \\
 &=& \int_\mathbb T u(t) \bigl(t^k \underline{I} (t)\bigr)^{\ast} \,m(dt) 
 \;=\; \bigl(u, e_k\underline{I}\bigr)_{L_m^2} \;=\;  0.
\end{eqnarray*}
Thus, (\ref{Nr.3.26}) is proved. However, condition (c) in Lemma \ref{3.7} shows that from (\ref{Nr.3.25})
and (\ref{Nr.3.26}) it follows that $g=0$. Combining this with (\ref{Nr.3.23}) and (\ref{Nr.3.24}) we get
\[ e_1\cdot (\underline{D}^{\ast})^{-1} \cdot  u = 0 \]
$m$-a.e. on $\mathbb T\setminus E$. Therefore, $u$ vanishes $m$-a.e. on $\mathbb T\setminus E$. 
Because $m(E) = 0$, this implies that $u$ vanishes $m$-a.e. on $\mathbb T$. 
This contradicts the assumption that $u$ belongs to
$L_{m,+}^2\setminus \{0\}$. Hence, the function $I$ is constant. Since $I$ is an inner function, there is an 
$\alpha\in\mathbb R$ such that $I$ is the constant function with value ${\rm exp} \{i\alpha\}$. 
Thus, formula (\ref{Nr.3.23}) can be rewritten in the form
\begin{equation}\label{Nr.3.27}
 \psi_1 (t) = \left \{\begin{array}{cl}
  0, & t\in E, \\
  \exp \{i\alpha\} \cdot t \cdot \bigl(\underline{D}^{\ast}(t)\bigr)^{-1}, 
     & t\in \mathbb T\setminus E .
\end{array}\right.
\end{equation}
Denote by $P_{\mathbb C_\mathbb T}$ the orthoprojector in $L_\mu^2$
onto the closed subspace $\mathbb C_\mathbb T$. Using \eqref{Nr.3.5},
(\ref{Nr.3.27}), and (\ref{Nr.3.12}) we obtain
\begin{eqnarray*}
0 \;<\; \bigl(G_\mu^\ast (1), \psi_1\bigr)_{L_\mu^2} 
  &\!=&\!\bigl(1,G_\mu \psi_1\bigr)_{\mathbb C} \;=\; G_\mu\psi_1 
 \;=\;\bigl(0,\;1\bigr)
      \left(\begin{matrix} T_\mu & F_\mu\\ G_\mu & S_\mu\end{matrix}\right) 
      \left(\begin{matrix} \psi_1 \cr 0\end{matrix}\right) \cr
  &\!=&\!P_{\mathbb C_{\mathbb T}} U_\mu^\times \psi_1 
 \;=\;\int_\mathbb T t^{\ast} \psi_1 (t) \,\mu (dt) 
 \;=\;\int_\mathbb T t^{\ast} \psi_1 (t) |\underline{D} (t)|^2 \,m (dt) \cr 
  &\!=&\!\int_\mathbb T t^{\ast} \left( \exp \{i\alpha \} t \bigl(\underline{D}^{\ast} (t)\bigr)^{-1} \right)
      |\underline{D} (t)|^2 \,m (dt) \cr
  &\!=&\!\exp \{ i\alpha\} \cdot \int_\mathbb T \underline{D} (t) \,m (dt)
 \;=\;\exp \{ i\alpha\} \cdot D (0) \cr
  &\!=&\!\exp \{ i\alpha\} \cdot \exp \left\{ \frac{1}{2} \int_\mathbb T {\rm ln}\,[w(t)] \,m (dt) \right\} .
\end{eqnarray*}
Consequently, it follows $\exp \{ i\alpha\}=1$.
\end{proof}

\begin{cor}\label{c3.9}
For each $k\in \{1,2,\ldots\}$, the function $\psi_k$ of the canonical orthonormal basis {\rm (\ref{Nr.3.16b})} 
in the space $\gH_\mu$ is given by
\begin{equation}\label{Nr.3.28}
 \psi_k (t) = 
 \left\{ \begin{array}{cl} 0, & t\in E, \cr 
  t^k \cdot \bigl(\underline{D}^{\ast} (t)\bigr)^{-1}, & t\in\mathbb T\setminus E . 
\end{array}\right. 
\end{equation}
\end{cor}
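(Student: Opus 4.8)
The plan is to derive the formula for $\psi_k$ directly from Theorem~\ref{3.10}, which already establishes the representation of $\psi_1$, and then to propagate this through the recursive definition of the sequence $(\psi_k)_{k=1}^\infty$. The key relation is \eqref{Nr.3.15}, which states that $\psi_{k+1}(t) = t^k \cdot \psi_1(t)$ for $t \in \mathbb T$ and $k \in \{1,2,\ldots\}$. Equivalently, from \eqref{Nr.3.13} we have $\psi_k = [(U_\mu^\times)^\ast]^{k-1}\psi_1$, and since $(U_\mu^\times)^\ast$ acts as multiplication by $t$ (being the adjoint of multiplication by $1/t$ on $L_\mu^2$), the function $\psi_k$ is obtained from $\psi_1$ by multiplication with $t^{k-1}$.

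First I would invoke Theorem~\ref{3.10} to record the explicit form of $\psi_1$, namely $\psi_1(t) = 0$ for $t \in E$ and $\psi_1(t) = t \cdot (\underline{D}^{\ast}(t))^{-1}$ for $t \in \mathbb T \setminus E$. Next I would apply the multiplication relation: for $k \in \{1,2,\ldots\}$ and $t \in \mathbb T$,
\[
 \psi_k(t) = t^{k-1} \cdot \psi_1(t).
\]
On the set $E$, since $\psi_1$ vanishes there, so does $\psi_k$, giving $\psi_k(t) = 0$ for $t \in E$. On $\mathbb T \setminus E$, substituting the formula for $\psi_1$ yields
\[
 \psi_k(t) = t^{k-1} \cdot t \cdot \bigl(\underline{D}^{\ast}(t)\bigr)^{-1} = t^k \cdot \bigl(\underline{D}^{\ast}(t)\bigr)^{-1},
\]
which is precisely \eqref{Nr.3.28}. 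This is essentially a one-line computation once $\psi_1$ is known.

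Since the corollary is an immediate consequence of the already-proved Theorem~\ref{3.10} combined with the structural relation \eqref{Nr.3.15} (or \eqref{Nr.3.13}), there is no genuine obstacle here; the only point requiring a moment of care is ensuring that the index bookkeeping in the exponent of $t$ is correct, i.e.\ that $k-1$ applications of multiplication by $t$ to $\psi_1$, whose leading factor is already $t^1$, produces the factor $t^k$ as claimed. The verification that $(U_\mu^\times)^\ast$ indeed acts by multiplication by $t$ has been used throughout Section~\ref{s6} (it underlies \eqref{Nr.3.15}), so I would simply cite \eqref{Nr.3.15} rather than reprove it. The proof is therefore a short deduction rather than a substantive argument.
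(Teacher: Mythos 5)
Your proposal is correct and matches the paper's intended argument exactly: the paper states before Lemma~\ref{3.7} that, in view of \eqref{Nr.3.15}, determining $\psi_1$ suffices to describe the whole sequence $(\psi_k)_{k=1}^\infty$, and the corollary is then the immediate substitution of the formula from Theorem~\ref{3.10} into $\psi_k(t)=t^{k-1}\psi_1(t)$, just as you carried out. Your index bookkeeping ($t^{k-1}\cdot t = t^k$ on $\mathbb T\setminus E$, and vanishing on $E$) is accurate.
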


\begin{cor}\label{c3.10}
The canonical orthonormal basis \eqref{Nr.3.16} in the space $L_\mu^2$ consists of the system 
$(\varphi_k)_{k=0}^\infty$ of orthonormal polynomials $($depending on $t^{-1})$ given by 
\eqref{Nr.3.2} and the orthonormal system $(\psi_k)_{k=1}^\infty$ given by \eqref{Nr.3.28}.
\end{cor}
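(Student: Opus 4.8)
The plan is to assemble the statement directly from Definition~\ref{3.5} and Corollary~\ref{c3.9}, since both constituent systems of the basis have already been identified in the preceding material.

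First I would recall that, by Definition~\ref{3.5}, the canonical orthonormal basis \eqref{Nr.3.16} of $L_\mu^2$ is, by its very construction in Section~\ref{s6}, the union of two systems: the orthonormal polynomial system $(\varphi_k)_{k=0}^\infty$ in the variable $t^{-1}$, obtained by applying the Gram--Schmidt procedure to $\{e_0, e_{-1}, e_{-2}, \ldots\}$ and hence characterized by the conditions \eqref{Nr.3.2}, together with the orthonormal system $(\psi_k)_{k=1}^\infty$ produced in \eqref{Nr.3.13}. Thus the description of the polynomial part of \eqref{Nr.3.16} in terms of \eqref{Nr.3.2} is immediate and requires nothing beyond the defining construction.

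It then remains only to insert the explicit form of the second system. But this is exactly the content of Corollary~\ref{c3.9}, which, via Theorem~\ref{3.10} and the recursion \eqref{Nr.3.15}, furnishes the closed formula \eqref{Nr.3.28} for every $\psi_k$ with $k\in\{1,2,\ldots\}$. Substituting this formula into the decomposition of \eqref{Nr.3.16} yields the assertion.

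There is no genuine obstacle here: all of the analytic effort resides upstream in Theorem~\ref{3.10}, where $\psi_1$ is pinned down from conditions (a)--(d) of Lemma~\ref{3.7} in combination with the factorization analysis of Lemma~\ref{3.9}. The present corollary merely records the resulting explicit description of the \emph{complete} basis of $L_\mu^2$, and the only point demanding care is the bookkeeping distinction between the basis \eqref{Nr.3.16b} of $\gH_\mu$, which omits the constant function $\varphi_0={\bf 1}$, and the basis \eqref{Nr.3.16} of $L_\mu^2$, which includes it.
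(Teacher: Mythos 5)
Your proposal is correct and follows exactly the route the paper intends: the corollary is an immediate assembly of Definition~\ref{3.5} (which fixes \eqref{Nr.3.16} as the union of the polynomial system characterized by \eqref{Nr.3.2} and the system $(\psi_k)_{k=1}^\infty$) with the explicit formula \eqref{Nr.3.28} from Corollary~\ref{c3.9}, itself obtained from Theorem~\ref{3.10} and the recursion \eqref{Nr.3.15}; the paper accordingly states it without a separate proof. Your remark distinguishing the basis \eqref{Nr.3.16b} of $\gH_\mu$ from the basis \eqref{Nr.3.16} of $L_\mu^2$ is exactly the right point of care.
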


\begin{rem}\label{3.11}
If $\mu$ coincides with the Lebesgue-Borel measure $m$ on $\mathbb T$, then the
Szeg\H{o} condition for $\mu$ is satisfied and $D$ is the constant function with value $1$. Thus,
\[
\varphi_k (t) = t^{-k},\quad k\in \{0,1,2,\ldots\},
\vspace{-2mm} \]
and \vspace{-1mm}
\[
\psi_k (t) = t^k,\quad k\in \{1,2,\ldots\}.
\]
In other words, the canonical orthonormal basis \eqref{Nr.3.16} in that case is given by 
\[ e_0, e_{-1},e_{-2},\ldots ;\; e_1, e_2,\ldots, \]
where for each integer $n$ the function $e_n :\mathbb T\to\mathbb C$ is defined as in \eqref{Nr.EK}.
\end{rem}

\begin{cor}\label{3.12}
The space $\gH_{\mu,\gF}^\perp$ consists of all functions $f$ of the form
\begin{equation*}
 f(t) = \left\{ \begin{array}{cl}
  0, & t\in E, \cr
  f_0 (t)\cdot\bigl(\underline{D}^{\ast} (t)\bigr)^{-1} , 
     & t\in\mathbb T\setminus E ,
\end{array}\right. 
\end{equation*}
where $f_0$ is some function belonging to $L_{m,+}^2$. Thereby,
\[
\|f\|_{L_\mu^2} = \|f_0\|_{L_m^2},
\]
i.e. the mapping $f\mapsto f_0$ establishes a metric isomorphism
between $\gH_{\mu,\gF}^\perp$ and $L_{m,+}^2$.
\end{cor}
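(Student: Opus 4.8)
The plan is to read off the description of $\gH_{\mu,\gF}^\perp$ directly from the explicit form of the canonical orthonormal basis $(\psi_k)_{k=1}^\infty$ obtained in Corollary~\ref{c3.9}. Recall from \eqref{Nr.3.11}, \eqref{Nr.3.13} and Remark~\ref{3.6} that $(\psi_k)_{k=1}^\infty$ is an orthonormal basis of $\gH_{\mu,\gF}^\perp$ and that, by \eqref{Nr.3.28}, each $\psi_k$ vanishes on $E$ and equals $e_k\cdot(\underline{D}^{\ast})^{-1}$ on $\mathbb T\setminus E$. The idea is to package this single formula into an isometry from $L_{m,+}^2$ into $L_\mu^2$ and then identify its range with $\gH_{\mu,\gF}^\perp$.

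First I would introduce the map $V$ which sends a function $f_0$ to the function $Vf_0$ that vanishes on $E$ and equals $f_0\cdot(\underline{D}^{\ast})^{-1}$ on $\mathbb T\setminus E$. The key computation is that $V$ is isometric: using the Lebesgue decomposition \eqref{Nr.3.4}, the fact that the support $E_{\mu_s}$ of $\mu_s$ lies in $E$, the relation $|\underline{D}|^2=w$ from Proposition~\ref{p2.6}, and $m(E)=0$ from Remark~\ref{3.1}, one obtains
\[
 \|Vf_0\|_{L_\mu^2}^2
 = \int_{\mathbb T\setminus E} |f_0|^2\,|\underline{D}|^{-2}\,w\,dm
 = \int_{\mathbb T} |f_0|^2\,dm
 = \|f_0\|_{L_m^2}^2 .
\]
This already yields the asserted norm identity $\|f\|_{L_\mu^2}=\|f_0\|_{L_m^2}$.

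Next I would match the two orthonormal systems. By \eqref{Nr.3.28} we have $Ve_k=\psi_k$ for every $k\in\{1,2,\ldots\}$, so $V$ carries the orthonormal family $(e_k)$ onto the orthonormal basis $(\psi_k)_{k=1}^\infty$ of $\gH_{\mu,\gF}^\perp$. Hence, for $f\in\gH_{\mu,\gF}^\perp$, expanding $f=\sum_{k=1}^\infty c_k\psi_k$ in $L_\mu^2$ with $\sum_k|c_k|^2=\|f\|_{L_\mu^2}^2<\infty$ and setting $f_0:=\sum_{k=1}^\infty c_k e_k$, the isometry of $V$ guarantees that this series converges in $L_m^2$ to an element $f_0\in L_{m,+}^2$ and that $f=Vf_0$, which is precisely the stated representation. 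Conversely, any $f_0$ of this form produces, via $f:=Vf_0=\sum_k c_k\psi_k$, an element of $\gH_{\mu,\gF}^\perp$ (using \eqref{Nr.67} to see $f\in\gH_\mu$). Thus $f\mapsto f_0$ is the desired metric isomorphism onto $L_{m,+}^2$.

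The step that requires the most care is the isometry computation together with the transfer of convergence from $L_\mu^2$ to $L_m^2$: one must ensure that the $m$-a.e.\ factorization $f=f_0\cdot(\underline{D}^{\ast})^{-1}$ on $\mathbb T\setminus E$ is compatible with $L^2$-convergence of both $\sum_k c_k\psi_k$ in $L_\mu^2$ and $\sum_k c_k e_k$ in $L_m^2$, and that the singular part $\mu_s$ genuinely drops out because it is concentrated on $E$. Once $V$ is recognized as an isometry matching $(e_k)$ with $(\psi_k)$, the remaining assertions reduce to bookkeeping with these two orthonormal bases.
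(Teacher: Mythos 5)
Your route is exactly the one the paper intends (Corollary~\ref{3.12} is stated there without a separate proof): read everything off from the explicit formula \eqref{Nr.3.28} for the functions $\psi_k$, the fact that $(\psi_k)_{k=1}^\infty$ is an orthonormal basis of $\gH_{\mu,\gF}^\perp$ (see \eqref{Nr.3.11}, \eqref{Nr.3.13}, and \eqref{Nr.3.40}), and the identity $\mu=|\underline{D}|^2\,m$ off the $m$-null set $E$ that carries $\mu_s$. Your isometry computation for $V$ and the expansion argument showing that every $f\in\gH_{\mu,\gF}^\perp$ has the representation $f=Vf_0$ with $f_0=\sum_{k\ge 1}c_ke_k$ are correct.

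The step that fails is the very last one: the claim that $f\mapsto f_0$ is onto $L_{m,+}^2$. Your basis matching is $Ve_k=\psi_k$ for $k\ge 1$ \emph{only}, so the coefficients you produce always give $f_0\in\bigvee_{k\ge 1}e_k$, which is a proper subspace of $L_{m,+}^2=\bigvee_{k\ge 0}e_k$: the constant $e_0$ is missing, and it must be missing, since
\[
\int_\dT (Ve_0)(t)\,\mu(dt)
=\int_{\dT\setminus E}\bigl(\underline{D}^{\ast}(t)\bigr)^{-1}|\underline{D}(t)|^{2}\,m(dt)
=\int_\dT \underline{D}(t)\,m(dt)=D(0)>0 ,
\]
so $Ve_0$ is not orthogonal to ${\mathbb C}_{\mathbb T}$, hence not even in $\gH_\mu$. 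Your parenthetical appeal to \eqref{Nr.67} cannot repair this: \eqref{Nr.67} is merely the orthogonal decomposition of $\gH_\mu$ and says nothing about whether a given function of $L_\mu^2$ lies in $\gH_\mu$. The underlying mechanism is that orthogonality of $Vf_0$ to $\bigvee_{j\ge 0}e_{-j}$ reduces, via $\mu=|\underline{D}|^2 m$ off $E$, to $\int_\dT t^{\,j+k}\underline{D}\,dm=0$ for $j\ge 0$, and since $D$ is an outer function in $H^2(\dD)$ this vanishing needs $k\ge 1$; for $k=0$ it fails precisely by $D(0)\neq 0$. So the correct conclusion of your argument is that $f\mapsto f_0$ is a metric isomorphism of $\gH_{\mu,\gF}^\perp$ onto $\bigvee_{k\ge 1}e_k$; equivalently, every $f\in\gH_{\mu,\gF}^\perp$ is of the form $f(t)=t\,g_0(t)\bigl(\underline{D}^{\ast}(t)\bigr)^{-1}$ on $\dT\setminus E$ with $g_0\in L_{m,+}^2$ (note the factor $t$, which is already visible in \eqref{Nr.3.17} and \eqref{Nr.3.28}). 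To be fair, the corollary's own wording has the same off-by-one flaw, but a proof should either flag it or insert the unimodular shift by $e_1$ so as to genuinely land on $L_{m,+}^2$.
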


Now we consider the case that the measure $\mu\in\cM_+^1 (\mathbb T)$
is generated by the pseudocontinuable non--inner function $\Theta$.

\renewcommand{\labelenumi}{(\alph{enumi})}
\renewcommand{\theenumi}{(\alph{enumi})}

\begin{prop}      \label{p6.14a}
  Let $\Theta \in \cS\Pi(\mathbb D)\setminus\cI(\mathbb D)$
  and $\mu\in\cM_+^1 (\mathbb T)$ be the measure associated with
  $\Theta$ according to Section \ref{s1}. Then:
  \begin{enumerate}
    \item      \label{p6.14a-Ta}
      The measure $\mu$ satisfies the Szeg\H{o} condition
      and the Szeg\H{o} function $D$ associated with $\mu$
      belongs to $\Pi(\mathbb D)$.
      
    \item      \label{p6.14a-Tb}
      Denote by $D^{\#}$ the pseudocontinuation of $D$.
      Let $k \in {\mathbb N}$. For all $\zeta \in {\mathbb D}$
      such that $D^{\#}$ is holomorphic at $\displaystyle \frac{1}{ \zeta^* }$
      and satisfies
      $\displaystyle D^{\#} \left( \frac{1}{ \zeta^* } \right) \neq 0$,
      we define
      \begin{equation*}
        \Psi_k(\zeta) 
          := \zeta^k
              \cdot
              \frac{1}
                  {
                  \left[
                    D^{\#} \left( \frac{1}{ \zeta^* } \right)
                  \right]^*
                  }
      \end{equation*}
      Then $\Psi_k(\zeta) \in \cN\cM({\mathbb D})$ and its boundary
      values $\underline{ \Psi_k }$ satisfy
      $1_{{\mathbb T} \setminus E} \underline{ \Psi_k } = \psi_k$
      $\mu$--a.e. on ${\mathbb T}$, where $\psi_k$ belongs to the canonical
      orthonormal basis in $L_{\mu}^2$ which was introduced in
      Definition \ref{3.5} and where $E$ stands for the Borel
      subset of ${\mathbb T}$, which was introduced in Remark \ref{3.1}.
  
    \item      \label{p6.14a-Tc}
      Let $F \in H^2({\mathbb D})$ and let
      \begin{equation*}
        G :=  \frac{F}
                {
                  \left(
                    D^{\#} 
                  \right)^{\wedge}
                }  .
      \end{equation*}
      Then $G \in \cN\cM({\mathbb D})$ and its boundary values
      $\underline{G}$ satisfy
      \begin{equation*}
        1_{{\mathbb T} \setminus E} \underline{G} \in \gH_{\mu,\gF}^\perp
      \end{equation*}
      and
      \begin{equation*}
        \left\lVert
          F
        \right\rVert_{H^2({\mathbb D})}
        =
        \left\lVert
          1_{{\mathbb T} \setminus E} \underline{G}
        \right\rVert_{L_{\mu}^2}  .
      \end{equation*}
      The mapping $F \longmapsto 1_{{\mathbb T} \setminus E} \underline{G}$
      is a metric isomorphism between $H^2({\mathbb D})$
      and $\gH_{\mu,\gF}^\perp$.
  \end{enumerate}
\end{prop}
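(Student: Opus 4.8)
The plan is to establish the three assertions in order, drawing on the explicit description of the canonical basis from Section~\ref{s6} and the pseudocontinuation machinery from Section~\ref{s2}. Assertion~\ref{p6.14a-Ta} is immediate: since $\Theta\in\cS\Pi(\dD)\setminus\cI(\dD)$ means $\Theta\in\Pi(\dD)\setminus\cI(\dD)$, statement~(i) of Theorem~\ref{t3.9} holds, and the claim is precisely statement~(ii) of that theorem.

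For assertion~\ref{p6.14a-Tb}, the first step is to rewrite $\Psi_k$ through the hat operation. By the definition of $f^{\wh}$, one has $(D^{\#})^{\wh}(\zeta)=[D^{\#}(1/\zeta^{*})]^{*}$ for $\zeta\in\dD$, so that $\Psi_k=e_k\cdot[(D^{\#})^{\wh}]^{-1}$ with $e_k(\zeta)=\zeta^k$. By~\ref{p6.14a-Ta} we have $D\in\Pi(\dD)$, hence Remark~\ref{r2.10} yields $(D^{\#})^{\wh}\in\cN\cM(\dD)$ together with $\underline{(D^{\#})^{\wh}}=\underline{D}^{*}$ $m$-a.e.\ on $\dT$. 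Being the Szeg\H{o} function, $D$ vanishes nowhere in $\dD$ and $\underline{D}\ne 0$ $m$-a.e., so $(D^{\#})^{\wh}$ does not vanish identically; as a quotient of two elements of the algebra $\cN\cM(\dD)$ whose denominator is not identically zero, $\Psi_k\in\cN\cM(\dD)$ and $\underline{\Psi_k}(t)=t^k(\underline{D}^{*}(t))^{-1}$ $m$-a.e. Comparing with the explicit formula for $\psi_k$ in Corollary~\ref{c3.9}, the functions $1_{\dT\setminus E}\underline{\Psi_k}$ and $\psi_k$ coincide $m$-a.e.: both vanish on $E$, while on $\dT\setminus E$ both equal $t^k(\underline{D}^{*})^{-1}$. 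It remains to upgrade this to $\mu$-a.e. Since $\mu_s$ is carried by $E_{\mu_s}\subseteq E$ (Remark~\ref{3.1}), off $E$ the measure $\mu$ is absolutely continuous with respect to $m$, so the $m$-a.e.\ identity on $\dT\setminus E$ is also $\mu$-a.e.; on $E$ both functions vanish, so they agree $\mu$-a.e.\ there too. This gives $1_{\dT\setminus E}\underline{\Psi_k}=\psi_k$ $\mu$-a.e.

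Assertion~\ref{p6.14a-Tc} follows by the same mechanism. Since $F\in H^2(\dD)\subseteq\cN\cM(\dD)$ and $(D^{\#})^{\wh}$ is not identically zero, $G=F/(D^{\#})^{\wh}\in\cN\cM(\dD)$, and Remark~\ref{r2.10} gives $\underline{G}=\underline{F}\,(\underline{D}^{*})^{-1}$ $m$-a.e. Thus $1_{\dT\setminus E}\underline{G}$ is exactly of the form described in Corollary~\ref{3.12} with $f_0=\underline{F}\in L_{m,+}^{2}$, the boundary function of an $H^2(\dD)$ function. Corollary~\ref{3.12} then delivers both $1_{\dT\setminus E}\underline{G}\in\gH_{\mu,\gF}^\perp$ and $\|1_{\dT\setminus E}\underline{G}\|_{L_\mu^2}=\|\underline{F}\|_{L_m^2}=\|F\|_{H^2(\dD)}$. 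Finally, the map $F\mapsto 1_{\dT\setminus E}\underline{G}$ is the composition of the standard isometric isomorphism $H^2(\dD)\to L_{m,+}^2$, $F\mapsto\underline{F}$, with the inverse of the metric isomorphism $\gH_{\mu,\gF}^\perp\to L_{m,+}^2$ of Corollary~\ref{3.12}; as a composition of metric isomorphisms it is itself one.

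The hard part will be the careful bookkeeping of the almost-everywhere statements, specifically the transfer from the $m$-a.e.\ identities---where the Nevanlinna-class boundary machinery of Remark~\ref{r2.10} naturally lives---to the $\mu$-a.e.\ identities demanded by the statement. This transfer rests entirely on the particular choice of the boundary function $\underline{D}$ and the exceptional set $E$ in Remark~\ref{3.1}, on the mutual absolute continuity of $\mu$ and $m$ off $E$, and on the vanishing of all functions involved on $E$. A secondary point requiring attention is that $\Psi_k$ and $G$ are a priori only defined where $D^{\#}$ is holomorphic and nonzero; one must observe that, as quotients of $\cN\cM(\dD)$-functions, they extend to genuine members of $\cN\cM(\dD)$.
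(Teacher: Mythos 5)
Your proposal is correct and follows essentially the same route as the paper: part (a) from Theorem~\ref{t3.9}, part (b) by combining Remark~\ref{r2.10} with the explicit formula for $\psi_k$ (Theorem~\ref{3.10}/Corollary~\ref{c3.9}), and part (c) via Corollary~\ref{3.12} analogously to (b). Your explicit handling of the passage from $m$-a.e.\ to $\mu$-a.e.\ identities (using that $\mu_s$ is carried by $E$ and both functions vanish there) spells out carefully what the paper compresses into the remark that $m(E)=0$.
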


\begin{proof}
  \ref{p6.14a-Ta}
    This follows from Theorem \ref{t3.9}.
  
  \ref{p6.14a-Tb}
    In view of Remark \ref{r2.10} we have $\Psi_k \in \cN\cM({\mathbb D})$
    and
    \begin{equation}      \label{p6.14a-1}
      \underline{ \Psi_k }(t) = \frac{t^k}{ \underline{D}^*(t) }
      \qquad  \qquad
      \mbox{for $m$--a.e. $t \in {\mathbb T}$.}
    \end{equation}
    Thus, in view of $m(E) = 0$, the combination of \eqref{p6.14a-1},
    Theorem \ref{3.10} and \eqref{Nr.3.15} shows that
    $1_{{\mathbb T} \setminus E} \underline{ \Psi_k } = \psi_k$
    $\mu$--a.e. on ${\mathbb T}$.
    
  \ref{p6.14a-Tc}
    Using Corollary \ref{3.12}, the assertion of \ref{p6.14a-Tc}
    can be obtained analogously \mbox{to \ref{p6.14a-Tb}}.
\end{proof}

\begin{rem}\label{3.13}
In \cite{Dub06} a detailed description of the matrix representations of the o\-pe\-rators
$T_\mu$, $F_\mu$, $G_\mu$, and $S_\mu$ $($and, consequently, of the operator $U_\mu)$ with respect
to the canonical basis \eqref{Nr.3.16} was given $($see 2.8 Comment, Chapter~A, and Theorems 2.13, 2.15,
and 2.17 there$)$. These matrix representations are expressed in terms of Schur parameters of the
characteristic function $\Theta_{\bigtriangleup_\mu}$ of the unitary colligation which is associated
with the probability measure $\mu$. We note that in Chap\-ter~4 of the monograph \cite{Sim05} by B. Simon
some historical remarks concerning these matrix representations are presented. In particular, the
representation, abbreviated by $GGT$, is used in \cite{Sim05}. Here $GGT$ stands for Geronimus, Gragg,
and Teplyaev.
\end{rem}


\section{Some criterion of pseudocontinuability of non-inner Schur functions 
in terms of orthogonal polynomials on the unit circle}
\label{s7}


It is well known (see, e.g., Brodskii \cite{B}) that one can consider simultaneously together with the simple unitary
colligation (\ref{Nr.2.2}) the adjoint unitary colligation
\begin{equation}\label{Nr.3.29}
 \tilde{\bigtriangleup}_\mu := 
 (\gH_\mu, \mathbb C, \mathbb C;T_\mu^\ast, G_\mu^\ast, F_\mu^\ast, S_\mu^\ast)
\end{equation}
which is also simple. For
$z\in\mathbb D$, its characteristic function $\Theta_{\tilde{\bigtriangleup}_\mu}$ is  given by
\begin{equation*}
 \Theta_{\tilde{\bigtriangleup}_\mu} (z) = \Theta_{\bigtriangleup_\mu}^{\ast} (z^{\ast}).
\end{equation*}
We note that the unitary colligation (\ref{Nr.3.29}) is associated with the operator $(U_\mu^\times)^\ast$.
It can be easily checked that the action of $(U_\mu^\times)^\ast$ is given for each
$ f\in L_\mu^2$ by
\[ [(U_\mu^\times)^\ast f] (t) = t\cdot f(t),\quad t\in\mathbb T . \]
If we replace the operator $U_\mu^\times $ by $(U_\mu^\times)^\ast$ in the preceding considerations,
which have lead to the canonical orthonormal basis (\ref{Nr.3.16}), we obtain an orthonormal basis
of the space $L_\mu^2$ which consists of two sequences
\begin{equation}
  \label{Nr.ONB}
  (\tilde{\varphi}_j)_{j=0}^\infty 
\quad\mbox{and}\quad
  (\tilde{\psi}_j)_{j=1}^\infty
\end{equation}
of functions. From our treatments above it follows that the orthonormal basis (\ref{Nr.ONB}) is uniquely
determined by the following conditions:
\begin{enumerate}
\item[(a)]
The sequence $(\tilde{\varphi}_j)_{k=0}^\infty$ arises from the result of the Gram-Schmidt orthogonalization
procedure of the sequence $\left( [(U_\mu^\times)^\ast]^n {\bf 1}\right)_{n=0}^\infty$ while
taking into account the normalization conditions
\begin{equation*}
 \bigl( [(U_\mu^\times)^\ast]^n {\bf 1}, \tilde{\varphi}_n \bigr)_{L_\mu^2} > 0,
\quad n\in \{0,1,2,\ldots \}.
\end{equation*}
\item[(b)]
The relations
\[ \bigl(F_{\mu}(1), \tilde{\psi}_1\bigr)_{L_\mu^2} > 0
\quad\mbox{and}\quad
   \tilde{\psi}_{k+1} = U_\mu^\times \tilde{\psi}_k,\quad k\in \{1,2,\ldots\}, \]
hold. 
\end{enumerate}
It can be easily checked that
\[ \tilde{\varphi}_{k} = \varphi_k^{\ast}, 
\quad k\in \{0,1,2,\ldots\}, 
\vspace{-2mm} \]
and \vspace{-2mm}
\[ \tilde{\psi}_{k} = \psi_k^{\ast},
\quad k\in \{1,2,\ldots\}.
\]

As in the paper \cite{Dub06}, we introduce the following notion.

\begin{defn}\label{3.14}
The orthogonal basis
\begin{equation}\label{Nr.3.31}
 \varphi_0^{\ast}, \varphi_1^{\ast}, \varphi_2^{\ast},\ldots ; \psi_1^{\ast}, \psi_2^{\ast}, \ldots
\end{equation}
is called the conjugate canonical orthonormal basis with respect to the canonical orthonormal basis \eqref{Nr.3.16}.
\end{defn}

Similar to (\ref{Nr.3.8}), the identity
\begin{equation}\label{Nr.3.38}
 \bigvee_{k=0}^{n} [(U_\mu^\times)^\ast]^k {\bf 1} 
 = \left( \bigvee_{k=0}^{n-1} (T_\mu^\ast)^k G_\mu^\ast (1) \right) \oplus \mathbb C_\mathbb T
\end{equation}
can be verified. Thus,
 \begin{equation}\label{Nr.3.39}
\gH_{\mu,\gF} = \bigvee_{k=1}^\infty \varphi_k,\quad \gH_{\mu,\gG}
= \bigvee_{k=1}^\infty \varphi_k^{\ast},
\end{equation}
 \begin{equation}\label{Nr.3.40}
\gH_{\mu,\gF}^\perp = \bigvee_{k=1}^\infty \psi_k,\quad
\gH_{\mu,\gG}^\perp = \bigvee_{k=1}^\infty \psi_k^{\ast}.
\end{equation}

In \cite[Chapter 3]{Dub06} the unitary operator $U$ was introduced  which
maps the elements of the canonical basis (\ref{Nr.3.16}) onto the corresponding elements of the
conjugate canonical basis (\ref{Nr.3.31}). More precisely,
\begin{equation*}
 U \varphi_n = \varphi_n^{\ast},\quad n\in \{0,1,2,\ldots\},
\qquad \mbox{and} \qquad 
 U \psi_n = \psi_n^{\ast},\quad n\in \{1,2,\ldots\}.
\end{equation*}
The operator $U$ is related to the conjugation operator in $L_\mu^2$. Namely, if
$f\in L_\mu^2$~and
 \begin{equation*}
f =\sum_{k=0}^\infty \alpha_k \varphi_k + \sum_{k=1}^\infty
\beta_k\psi_k,
\end{equation*}
then
\begin{equation*}
 f^{\ast} = \sum_{k=0}^\infty \alpha_k^{\ast} \varphi_k^{\ast} + \sum_{k=1}^\infty \beta_k^{\ast} \psi_k^{\ast} 
          = \sum_{k=0}^\infty \alpha_k^{\ast} U\varphi_k + \sum_{k=1}^\infty \beta_k^{\ast} U\psi_k.
\end{equation*}
In \cite[Theorem~3.6]{Dub06} the matrix representation of the blocks of the operator $U$ with respect
to the canonical basis (\ref{Nr.3.16}) was expressed in terms of Schur parameters of the
characteristic function $\Theta_{\mu_\bigtriangleup}$. A finer analysis of this matrix representation
(see \cite[Chapters 4 and 5]{Dub06}) yields effective criteria for the pseudocontinuability of a non-inner
function belonging to the Schur class $\cS (\mathbb D)$. These criteria are formulated in terms of
Schur parameters.

Let $\Theta\in\cS (\mathbb D).$ The relations (\ref{Nr.0.1f}) and (\ref{Nr.0.3}) allow us to associate a measure $\mu\in \cM_+^1 (\mathbb T)$ with the function $\Theta$. Furthermore, we associate the simple unitary colligation $\bigtriangleup_\mu$ given by (\ref{Nr.2.2}) with the same measure $\mu$ . 
Then Theorem \ref{2.1} implies $\Theta_{\bigtriangleup_\mu} =\Theta$. The unitary colligation 
$\bigtriangleup_\mu$ suggests that we consider the canonical basis (\ref{Nr.3.16}) and 
the conjugate canonical basis (\ref{Nr.3.31}) in $L_\mu^2$. The equations (\ref{Nr.3.39}) and (\ref{Nr.3.40}) 
allow us to reformulate Theorem \ref{t4.2} into the following criterion of pseudocontinuability of 
non-inner Schur functions in terms of the canonical orthonormal basis
of $L_\mu^2$ introduced in Definition \ref{3.5}.

\begin{thm}\label{4.3}
 Let $\Theta\in\cS (\mathbb D)\setminus\cI(\mathbb D)$ and let
 $\mu\in\cM_+^1 (\mathbb T)$ be the measure associated with
 $\Theta$ according to Section \ref{s1}. Let
 $\bigtriangleup_\mu$ be the simple unitary colligation 
\eqref{Nr.2.2} which satisifies $\Theta_{\bigtriangleup_\mu} =\Theta$. 
 Furthermore, let the cano\-ni\-cal orthogonal basis in $L_\mu^2$ be
 given by \eqref{Nr.3.16}. Then $\Theta$ admits a pesudocontinuation if
 and only if
  \begin{equation}\label{Nr.4.2}
  \bigvee_{n=1}^\infty \varphi_n^{\ast}\cap \bigvee_{n=1}^\infty\psi_n \ne \{0\}
  \vspace{-1mm}
  \end{equation}
  (where the symbol ,,$\bigvee$`` is used in the context of $L_\mu^2$).
\end{thm}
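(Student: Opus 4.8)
The plan is to translate Theorem~\ref{t4.2} directly into the language of the canonical orthonormal basis, using the two identifications \eqref{Nr.3.39} and \eqref{Nr.3.40} as the essential bridge. The key observation is that the abstract subspaces appearing in Theorem~\ref{t4.2} have already been given explicit spanning sets in terms of the canonical basis \eqref{Nr.3.16} and its conjugate \eqref{Nr.3.31}. Concretely, \eqref{Nr.3.39} gives $\gH_{\mu,\gF} = \bigvee_{k=1}^\infty \varphi_k$ and $\gH_{\mu,\gG} = \bigvee_{k=1}^\infty \varphi_k^{\ast}$, while \eqref{Nr.3.40} gives $\gH_{\mu,\gF}^\perp = \bigvee_{k=1}^\infty \psi_k$ and $\gH_{\mu,\gG}^\perp = \bigvee_{k=1}^\infty \psi_k^{\ast}$. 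Since these four orthogonal pieces together exhaust the canonical orthonormal basis of $\gH_\mu$ (Remark~\ref{3.6}), the subspace identifications are genuine orthogonal decompositions, not merely spanning statements.

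First I would set up the colligation correctly. By Theorem~\ref{2.1} the simple unitary colligation $\bigtriangleup_\mu$ from \eqref{Nr.2.2} satisfies $\Theta_{\bigtriangleup_\mu} = \Theta$, so $\Theta$ is exactly the characteristic operator function of $\bigtriangleup_\mu$. This places us squarely in the hypotheses of Theorem~\ref{t4.2}, with $\gH = \gH_\mu$ and $T = T_\mu$. Theorem~\ref{t4.2} then asserts that $\Theta \in \Pi(\mathbb D)\setminus\cI(\mathbb D)$ is equivalent to the condition $\gH_\gG \cap \gH_\gF^\perp \neq \{0\}$ (its statement~(ii)). Since we are already assuming $\Theta \in \cS(\mathbb D)\setminus\cI(\mathbb D)$, the clause $\Theta \notin \cI(\mathbb D)$ is automatic, so for our purposes ``$\Theta$ admits a pseudocontinuation'' is equivalent to statement~(ii) of Theorem~\ref{t4.2}.

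Next I would simply substitute the explicit descriptions. Applying \eqref{Nr.3.39} to $\gH_{\mu,\gG}$ and \eqref{Nr.3.40} to $\gH_{\mu,\gF}^\perp$, the condition $\gH_{\mu,\gG} \cap \gH_{\mu,\gF}^\perp \neq \{0\}$ becomes precisely
\[
  \bigvee_{n=1}^\infty \varphi_n^{\ast} \cap \bigvee_{n=1}^\infty \psi_n \neq \{0\},
\]
which is exactly \eqref{Nr.4.2}. Thus the equivalence ``(i) $\Leftrightarrow$ (ii)'' of Theorem~\ref{t4.2}, read through \eqref{Nr.3.39} and \eqref{Nr.3.40}, yields the claim directly.

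I do not expect a serious obstacle in the argument itself; the theorem is essentially a dictionary translation, and the genuine analytic content lives upstream in Theorem~\ref{t4.2}, in Theorem~\ref{3.10}, and in the construction of the canonical basis. The one point requiring a little care is the verification that \eqref{Nr.3.39} and \eqref{Nr.3.40} really describe the relevant subspaces for \emph{this} colligation and are not merely formal. This rests on the span formula \eqref{Nr.3.8} and its conjugate \eqref{Nr.3.38}, together with the fact that the adjoint colligation $\tilde{\bigtriangleup}_\mu$ in \eqref{Nr.3.29} produces the conjugate basis \eqref{Nr.3.31} via the passage from $U_\mu^\times$ to $(U_\mu^\times)^\ast$; the observability space $\gH_{\mu,\gG}$ for $\bigtriangleup_\mu$ coincides with the controllability space for $\tilde{\bigtriangleup}_\mu$, which is why $\varphi_k^{\ast}$ span it. Once this identification is granted, the proof is immediate, so I would write it as a short two-step argument invoking Theorem~\ref{t4.2} and the decompositions \eqref{Nr.3.39}--\eqref{Nr.3.40}.
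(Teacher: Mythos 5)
Your proposal is correct and follows essentially the same route as the paper: the paper likewise obtains Theorem~\ref{4.3} by applying Theorem~\ref{t4.2} to the simple unitary colligation $\bigtriangleup_\mu$ (with $\Theta_{\bigtriangleup_\mu}=\Theta$ via Theorem~\ref{2.1}) and then rewriting the condition $\gH_{\mu,\gG}\cap\gH_{\mu,\gF}^\perp\ne\{0\}$ through the identifications \eqref{Nr.3.39} and \eqref{Nr.3.40}, exactly as you do. Your additional remark that \eqref{Nr.3.38} and the adjoint colligation \eqref{Nr.3.29} justify $\gH_{\mu,\gG}=\bigvee_{k=1}^\infty \varphi_k^{\ast}$ is precisely the paper's own justification, so no gap remains.
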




\section{A direct connection between the criteria of pseudocontinuability given by Theorems \ref{t3.9} and \ref{t4.2}}
\label{s8}


The combination of Proposition \ref{p2.14} and Theorem \ref{4.3} shows that in order to establish a 
direct connection between the criteria of pseudocontinuability of functions belonging to 
$\cS (\mathbb D)\setminus\cI(\mathbb D)$ which are contained in Theorems \ref{t3.9} and \ref{t4.2}
it is sufficient to verify the following statement.

\begin{thm}\label{4.5}
 Let $\Theta\in\cS(\mathbb D)\setminus\cI(\mathbb D)$ and let $\mu\in\cM_{+}^{1}(\dT)$ be the measure
 associated with $\Theta$ according to {\rm Section~\ref{s1}}. Then condition \eqref{Nr.4.2} in {\rm Theorem~\ref{4.3}} is fulfilled if and only if the measure $\mu$ satisfies 
 the Szeg\H{o} condition and there exist functions $I_1,I_2\in\cI(\mathbb D)$ such that the identity
 \eqref{Nr.p1} holds $m$-a.e. on $\dT$.
\end{thm}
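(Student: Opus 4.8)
The plan is to argue directly inside the model space $L_\mu^2$, reading off both sides of the asserted equivalence from the explicit form of the canonical basis, and in particular to avoid routing through the characterization $\Theta\in\Pi(\mathbb D)$ (which would merely re-use the two theorems we wish to connect). First I would dispose of the non-Szeg\H{o} case: if $\mu$ fails the Szeg\H{o} condition, then the polynomials are complete in $L_\mu^2$, so $\gH_{\mu,\gF}^\perp=\{0\}$ by Corollary~\ref{3.3}; hence $\bigvee_{n\geq1}\psi_n=\{0\}$ and \eqref{Nr.4.2} is violated, while the right-hand side of the theorem is vacuously false since the function $\underline D$ occurring in \eqref{Nr.p1} is then undefined. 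Thus both statements fail simultaneously, and from now on I assume the Szeg\H{o} condition, so that $D$, the exceptional set $E$, and the functions $\psi_k$ are all available. Using \eqref{Nr.3.39} and \eqref{Nr.3.40}, condition \eqref{Nr.4.2} reads $\gH_{\mu,\gG}\cap\gH_{\mu,\gF}^\perp\neq\{0\}$, which is statement (ii) of Theorem~\ref{t4.2} rewritten in the basis.

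Next I would linearize the two subspaces. Set $\mathcal{K}:=\{g\in L_\mu^2: g=0\ \mu\text{-a.e. on }E\}$. Since $m(E)=0$ and $|\underline D|^2=w$, while the singular part $\mu_s$ is carried by $E$, the assignment $Vg:=\underline D\,g$ defines a unitary operator $V:\mathcal{K}\to L_m^2$. Corollary~\ref{3.12} and its conjugate counterpart place both $\gH_{\mu,\gF}^\perp$ and $\gH_{\mu,\gG}^\perp$ inside $\mathcal{K}$; computing the images of the basis vectors $\psi_k(t)=t^k(\underline D^{\ast})^{-1}$ and $\psi_k^{\ast}(t)=t^{-k}(\underline D)^{-1}$ supplied by Corollary~\ref{c3.9}, I obtain
\begin{equation*}
 V\bigl(\gH_{\mu,\gF}^\perp\bigr)=\omega\cdot\bigl(e_1 H^2(\dD)\bigr),\qquad
 V\bigl(\gH_{\mu,\gG}^\perp\bigr)=\bigl(H^2(\dD)\bigr)^\perp,
\end{equation*}
where $\omega:=\underline D\,(\underline D^{\ast})^{-1}$ is unimodular $m$-a.e. and $e_n$ is as in \eqref{Nr.EK}.

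Now I would translate the intersection condition. For $h\in\gH_{\mu,\gF}^\perp$ one has $h\perp\mathbb C_\mathbb T$ automatically, so $h\in\gH_{\mu,\gG}$ if and only if $h\perp\gH_{\mu,\gG}^\perp$. As $V$ preserves inner products on $\mathcal{K}$, this is equivalent to $Vh\perp(H^2(\dD))^\perp$, i.e. $Vh\in H^2(\dD)$; combined with $Vh\in\omega\,e_1H^2(\dD)$, I conclude that \eqref{Nr.4.2} holds precisely when $\omega\,e_1H^2(\dD)\cap H^2(\dD)\neq\{0\}$. It then remains to identify this last condition with the existence of inner $I_1,I_2$ satisfying \eqref{Nr.p1}, i.e. $\omega=\underline{I_2}\,\underline{I_1}^{-1}$. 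Given a nonzero $\varphi=\omega\,\psi$ with $\varphi\in H^2(\dD)$ and $\psi\in e_1H^2(\dD)$, inner-outer factorization (Proposition~\ref{p2.8}) of $\varphi$ and $\psi$, together with the unimodularity of $\omega$ and Remark~\ref{r2.2}, forces the two outer factors to have equal boundary modulus, hence to coincide up to a unimodular constant (Remark~\ref{r2.9}); the inner factors then yield \eqref{Nr.p1}. Conversely, if $\omega=\underline{I_2}\,\underline{I_1}^{-1}$, then $\omega\cdot\underline{e_1 I_1}=\underline{e_1 I_2}\in H^2(\dD)$ is a nonzero element of the intersection.

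This closes the equivalence, the final factorization step being exactly the one underlying the proof of Proposition~\ref{p2.14}. The hard part will be the bookkeeping of the middle paragraph: verifying that $V$ is genuinely a unitary map \emph{onto} $L_m^2$ notwithstanding the singular part $\mu_s$, and pinning down the two images with the correct index shift (the factor $e_1$, which reflects that the completion $(\psi_k)_{k\geq1}$ begins at index $1$ while $\varphi_0=\mathbf 1$ is absorbed into $\mathbb C_\mathbb T$). Once this transform is in place the remaining arguments are routine, so the crux is establishing the model $V$ and the precise description of the images of $\gH_{\mu,\gF}^\perp$ and $\gH_{\mu,\gG}^\perp$.
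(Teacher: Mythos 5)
Your proof is correct, but it takes a genuinely different route from the paper's, most visibly in the forward direction. There the paper argues operator--theoretically: the intersection $\gN_{\gG\gF}=\gH_{\mu,\gG}\cap\gH_{\mu,\gF}^\perp$ is $T_\mu^\ast$-invariant and carries a unilateral shift of multiplicity one, so a wandering vector $h_0$ can be chosen; Lemma~\ref{3.9} (via the Riesz brothers' theorem) pins down its form, and a polynomial-approximation argument (splitting $L_\mu^2$-convergence into its $w\,dm$- and $\mu_s$-parts) then shows that the unimodular function $e_1\,\underline{\hat I_1}\,\underline D\,(\underline D^{\ast})^{-1}$ is an $L_m^2$-limit of boundary values of $H^2(\dD)$-functions, hence inner. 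You bypass all of this: your unitary $V:g\mapsto\underline D\,g$ from $\mathcal K$ onto $L_m^2$ linearizes both subspaces at once, converts \eqref{Nr.4.2} into the Hardy-space statement $\omega\,e_1H^2\cap H^2\neq\{0\}$ with $\omega:=\underline D\,(\underline D^{\ast})^{-1}$, and settles it by inner--outer factorization applied to an \emph{arbitrary} nonzero element of the intersection --- no shift structure, no Lemma~\ref{3.9}, no approximation. In the converse direction the two proofs nearly coincide: your element $V^{-1}(\underline{e_1I_2})=1_{\dT\setminus E}\,e_1\underline{I_1}\,(\underline D^{\ast})^{-1}$ is exactly the paper's $h_0$ from \eqref{Nr.4.8}, but where the paper checks $h_0\in\bigvee_n\psi_n$, $h_0\perp\psi_n^{\ast}$, $h_0\perp\varphi_0^{\ast}$, and $\|h_0\|_{L_\mu^2}=1$ by hand, these facts come for free in your model. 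What each approach buys: yours is shorter, uniform in both directions, and makes transparent that the whole theorem reduces to ``$\omega$ is a quotient of inner functions,'' which ties it directly to Proposition~\ref{p2.14} and to the Douglas--Rudin remark closing Section~\ref{s8}; the paper's proof stays inside its operator framework and exhibits the shift structure of $\gN_{\gG\gF}$, which has independent interest there. Finally, your insistence on the index shift is well placed: the image $V(\gH_{\mu,\gF}^\perp)=\omega\cdot e_1H^2(\dD)$, computed from the basis vectors of Corollary~\ref{c3.9}, is the correct one, whereas Corollary~\ref{3.12} taken literally (with $f_0$ ranging over all of $L_{m,+}^2$) is too generous --- a function $1_{\dT\setminus E}\,f_0\,(\underline D^{\ast})^{-1}$ with $\int_\dT f_0\,dm\neq0$ has nonzero inner product with ${\bf 1}$ and so lies outside $\gH_\mu$; the discrepancy is harmless for the final equivalence, since $\omega\,e_1H^2\cap H^2\neq\{0\}$ if and only if $\omega\,H^2\cap H^2\neq\{0\}$, but deriving the images from Corollary~\ref{c3.9} rather than quoting Corollary~\ref{3.12} verbatim, as you do, is the right call.
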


\begin{proof}
Let condition (\ref{Nr.4.2}) be satisfied. Then the system $(\varphi_n)_{n=0}^\infty$ is not complete
in $L_\mu^2$. Thus, as mentioned above, condition (\ref{Nr.3.4a}) is fulfilled. From the definition of 
the subspaces $\gH_{\mu,\gF}$ and $\gH_{\mu,\gG}$ (see (\ref{Nr.d1.5c}) and Section~\ref{s5}) it follows that the
subspaces $\gH_{\mu,\gG}$ and $\gH_{\mu,\gF}^\perp$ are invariant
with respect to the operator $T_\mu^\ast$. Moreover, since the
unitary colligation $\bigtriangleup_\mu$ is simple we get
\begin{equation*}
{\rm Rstr.}_{\gH_{\mu,\gF}^\perp} T_\mu^\ast = V_{T_\mu^\ast}.
\end{equation*}
Taking into account (\ref{Nr.3.39}), (\ref{Nr.3.40}), and (\ref{Nr.4.2}) it follows
\begin{equation*}
\gN_{\gG\gF} := \gH_{\mu,\gG} \cap \gH_{\mu,\gF}^\perp =
\bigvee_{n=1}^\infty \varphi_n^{\ast} \cap
\bigvee_{n=1}^\infty \psi_n\ne \{0\}.
\end{equation*}
Hence, the space $\gN_{\gG\gF}$ is also invariant with respect
to $T_\mu^\ast$ and the restriction of $T_\mu^\ast$ is a
unilateral shift of multiplicity $1$. Let $h_0$ be a basis function
of the gene\-ra\-ting wandering subspace of the unilateral shift 
${\rm Rstr.}_{\gN_{\gG\gF}} T_\mu^\ast$. Then the function $h_0$
satisfies the conditions in Lemma \ref{3.9}. Thus, there is a
function $\hat{I}_1\in\cI(\dD)$ such~that 
\begin{equation}\label{Nr.4.5}
 h_0 (t) = \left\{\begin{array}{cl}
     0 , & t\in E, \\
 t\cdot \bigl(\underline{D}^{\ast} (t)\bigr)^{-1} \cdot \underline{\hat{I}_1} (t) , 
         & t\in\mathbb T\setminus E .
\end{array}\right.
\end{equation}
On the other hand, in view of $h_0\in \bigvee_{n=1}^\infty \varphi_n^{\ast}$ there exists a
sequence $(P_n)_{n=1}^\infty$ of polynomials such that the limit relation
\begin{equation*}
\lim_{n\to \infty} \int_\mathbb T |h_0(t) - P_n(t)|^2 \,\mu(dt) = 0
\end{equation*}
holds. This implies
\begin{equation*}
\lim_{n\to\infty} \left\{ \int_\mathbb T |h_0 (t)\underline{D} (t) - P_n (t)\underline{D} (t)|^2 \,m(dt)
+ \int_\mathbb T |P_n (t)|^2 \,\mu_s (dt)\right\} = 0.
\end{equation*}
Combining this with (\ref{Nr.4.5}) we obtain
\begin{equation*}
\lim_{n\to\infty}  \int_\mathbb T
 \left | t \cdot \underline{\hat{I}_1}(t) \cdot \underline{D}(t) \cdot \bigl(\underline{D}^{\ast} (t)\bigr)^{-1} 
        - P_n(t) \underline{D} (t) \right|^2 m (dt) = 0.
\end{equation*}
Therefore, the unimodular function $\hat{I} :\mathbb T\to\mathbb C$ which is defined by
\begin{equation*}
\hat{I}(t)  := t \cdot \underline{\hat{I}_1} (t) \cdot \underline{D}(t) \cdot \bigl(\underline{D}^{\ast} (t)\bigr)^{-1} 
\end{equation*}
is the $L_m^2$-limit of the sequence $\bigl(({\rm Rstr.}_\mathbb T P_n)\cdot \underline{D}\bigr)_{n=1}^\infty$.
Thus, since the sequence $\bigl(({\rm Rstr.}_\mathbb D P_n)\cdot D\bigr)_{n=1}^\infty$ belongs to the Hardy space
$H^2 (\mathbb D)$ and since $\hat{I}$ is a unimodular function, we see that there is a function $I_2\in\cI(\dD)$ 
such that its boundary function $\underline{I}_2$ coincides $m$-a.e. on $\dT$ with $\hat{I}$. Hence, the identity
\begin{equation*}
 \underline{I_2} (t)  = 
 t \cdot \underline{\hat{I}_1} (t) \cdot  \underline{D}(t) \cdot \bigl(\underline{D}^{\ast} (t)\bigr)^{-1} 
\end{equation*}
holds $m$-a.e. on $\dT$. This implies that 
\begin{equation*}
 \underline{D}(t) \cdot \bigl(\underline{D}^{\ast} (t)\bigr)^{-1}  =
 \underline{I_2} (t) \cdot t^{-1} \cdot \bigl(\underline{\hat{I}_1} (t)\bigr)^{-1}  
\end{equation*}
holds $m$-a.e. on $\dT$.
Therefore, if we define the function $I_1 :\mathbb D\to\mathbb C$ by 
\[ I_1 (\zeta) := \zeta\cdot \hat{I}_1 (\zeta), \]
then $I_1$ is a function belonging to $\cI(\mathbb D)$ such that 
\eqref{Nr.p1} holds $m$-a.e. on $\dT$.

Conversely, we assume now that the conditions \eqref{Nr.3.4a} and \eqref{Nr.p1} are satisfied. Let $E$ be the set 
introduced in Remark \ref{3.1}. In view of $m(E) = 0$ we can assume that the 
boundary functions $\underline{I_1}$ and $\underline{I_2}$ of the inner functions 
$I_1$ and $I_2$ vanish on $E$. Thus, the functions $\underline{I_1}$ and $\underline{I_2}$ 
vanish $\mu_s$-a.e. on $\dT$. From \eqref{Nr.p1} it follows that
\begin{equation}\label{Nr.4.7}
 t\cdot \underline{I_1} (t) \cdot \bigl(\underline{D}^{\ast} (t)\bigr)^{-1} =
 t\cdot \underline{I_2} (t) \cdot \bigl(\underline{D} (t)\bigr)^{-1} 
\end{equation}
holds $m$-a.e. on $\dT$.
We are going to prove that the function $h_0$ defined by
\begin{equation}\label{Nr.4.8}
 h_0 (t) := \left\{\begin{array}{cl}
 0 , & t\in E, \\
 t\cdot \underline{I_1} (t) \cdot \bigl(\underline{D}^{\ast} (t)\bigr)^{-1} ,
    & t\in\mathbb T\setminus E,
\end{array}\right.
\end{equation}
belongs to the intersection of the sets described on the left-hand side of formula (\ref{Nr.4.2}).
Indeed, in view of Corollary \ref{3.12} and (\ref{Nr.3.40}) we infer
\begin{equation}\label{Nr.4.9}
h_0\in \bigvee_{n=1}^\infty \psi_n.
\end{equation}
On the other hand, using (\ref{Nr.4.8}), (\ref{Nr.4.7}), (\ref{Nr.3.28}), and $m(E) = 0$ for each
$n\in\{1,2,\ldots\}$ one can conclude 
\begin{eqnarray*}
 \bigl(h_0,\psi_n^{\ast}\bigr)_{L_\mu^2} 
  &=& \int_\mathbb T h_0(t) \psi_n(t) \,\mu(dt)
 \;=\;\int_{\mathbb T\setminus E} h_0(t)\psi_n(t) \,d\mu \\
  &=& \int_{\mathbb T\setminus E } 
     t \underline{I_1} (t) \bigl(\underline{D}^{\ast} (t)\bigr)^{-1} \psi_n (t) \,\mu (dt) \\
  &=& \int_{\mathbb T\setminus E } 
     t \underline{I_2} (t) \bigl(\underline{D} (t)\bigr)^{-1} \psi_n (t) \,\mu (dt) \\
  &=& \int_{\mathbb T\setminus E} t \underline{I_2} (t) \bigl(\underline{D} (t)\bigr)^{-1}
        t^n \bigl(\underline{D}^{\ast} (t)\bigr)^{-1} \,\mu (dt) \\
  &=& \int_{\mathbb T\setminus E} t^{n+1} \underline{I_2} (t) |\underline{D} (t)|^{-2} \,\mu (dt) \\
  &=& \int_{\mathbb T\setminus E} t^{n+1} \underline{I_2} (t) |\underline{D} (t)|^{-2} |\underline{D} (t)|^2 \,m (dt) \\
  &=& \int_{\mathbb T\setminus E} t^{n+1}  \underline{I_2} (t) \,m (dt)
 \;=\;\int_{\mathbb T} t^{n+1}  \underline{I_2} (t) \,m (dt) \;=\; 0.
\end{eqnarray*}
Thus, we get
\begin{equation}\label{Nr.4.10}
 h_0\in \left(\bigvee_{n=1}^\infty \psi_n^{\ast}\right)^\perp .
\end{equation}
In view of $I_1 D\in H^2 (\mathbb D)$ we have
\[ \int_\mathbb T t \underline{I_1} (t) \underline{D} (t) \,m(dt) = 0. \]
Consequently, by using (\ref{Nr.4.11}), (\ref{Nr.4.8}), and $m(E)=0$ we obtain
\begin{eqnarray}\label{Nr.4.13}
 \bigl(h_0,\varphi_0^{\ast}\bigr)_{L_\mu^2} 
  &\!=&\! \int_\mathbb T h_0(t) \varphi_0(t) \,\mu(dt)
 \;=\;\int_\mathbb T h_0(t) \,\mu(dt) 
 \;=\;\int_{\mathbb T\setminus E } h_0(t) \,\mu(dt) \cr
  &\!=&\!\int_{\mathbb T\setminus E} t \underline{I_1} (t) \bigl(\underline{D}^{\ast} (t)\bigr)^{-1} \,\mu (dt) 
 \;=\;\int_{\mathbb T\setminus E} t \underline{I_1} (t) \bigl(\underline{D}^{\ast} (t)\bigr)^{-1}
       |\underline{D} (t)|^2 \,m (dt) \cr
  &\!=&\!\int_{\mathbb T\setminus E} t \underline{I_1} (t) \underline{D} (t) \,m (dt) 
 \;=\;\int_\mathbb T t \underline{I_1} (t) \underline{D} (t) \,m (dt)
 \;=\;0.\qquad
\end{eqnarray}
Since
\begin{equation*}
 \varphi_0^{\ast}, \varphi_1^{\ast}, \varphi_2^{\ast}, \ldots ; \psi_1^{\ast}, \psi_2^{\ast}, \ldots
\end{equation*}
is an orthonormal basis of $L_\mu^2$ from (\ref{Nr.4.10}) and (\ref{Nr.4.13}) it follows that
\begin{equation*}
 h_0\in \bigvee_{n=1}^\infty \varphi_n^{\ast} .
\end{equation*}
Combining this with (\ref{Nr.4.9}) we see that
\begin{equation}\label{Nr.4.14}
 h_0\in \bigvee_{n=1}^\infty \varphi_n^{\ast} \cap \bigvee_{n=1}^\infty \psi_n.
\end{equation}
By using $I_1\in\cI(\dD)$, $m(E) = 0$, and (\ref{Nr.4.8}) we obtain
\begin{eqnarray*}
\|h_0\|_{L_\mu^2}^2 
 &=& 
 \int_\mathbb T |h_0(t)|^2 \,\mu(dt) \;=\; \int_{\mathbb T\setminus E } |h_0(t)|^2 \,\mu(dt) \\
 &=&
 \int_{\mathbb T\setminus E} \left|t \underline{I_1} (t) \bigl(\underline{D}^{\ast} (t)\bigr)^{-1} \right|^2 \mu (dt)
 \;=\;
 \int_{\mathbb T\setminus E} |\underline{D} (t)|^{-2} \,\mu (dt) \\
 &=&
 \int_{\mathbb T\setminus E} |\underline{D} (t)|^{-2} |\underline{D} (t)|^2 \,m(dt) 
 \;=\; m (\mathbb T \setminus E) \;=\; m (\mathbb T) \;=\; 1 .
\end{eqnarray*}
Therefore, from (\ref{Nr.4.14}) we see
\begin{equation*}
\bigvee_{n=1}^\infty \varphi_n^{\ast} \cap
\bigvee_{n=1}^\infty \psi_n\ne \{0\}.
\end{equation*}
Thus, (\ref{Nr.4.2}) is fulfilled. 
\end{proof}


We now look to cast some light on an interesting aspect of pseudocontinuability of Schur functions by considering
our main result under the light of the following well-known Douglas-Rudin Theorem
(see, e.g., Garnett \cite[Theorem 2.1 in Chapter~V]{G}).

\begin{thm}\label{4.7}
Denote by $U$ a unimodular function belonging to the space $L^\infty(\mathbb T)$. 
Then for every $\varepsilon>0$ there exist functions $I_1,I_2\in\cI(\dD)$ satisfying
\begin{equation*}
\left\| U - \underline{I_2} \cdot \underline{I_1}^{-1}\right \|_{L^\infty (\mathbb T)} <\varepsilon.
\end{equation*}
\end{thm}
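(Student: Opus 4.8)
The plan is to recast the statement as a factorization problem for a single outer function and then to settle it by an approximation argument. Since $U$ is unimodular, I would first write $U=\exp(i\phi)$ with a real-valued bounded measurable function $\phi:\mathbb{T}\to(-\pi,\pi]$. Because the boundary values of an inner function are unimodular $m$-a.e., the desired estimate $\|U-\underline{I_2}\,\underline{I_1}^{-1}\|_{L^\infty(\mathbb{T})}<\varepsilon$ is equivalent, after multiplication by the unimodular factor $\underline{I_1}$, to $\|U\,\underline{I_1}-\underline{I_2}\|_{L^\infty(\mathbb{T})}<\varepsilon$. So the task is to find an inner function $I_1$ for which $U\,\underline{I_1}$ lies uniformly within $\varepsilon$ of the boundary values of a second inner function $I_2$, i.e. to approximate $U$ simultaneously from the two ``analytic sides''.

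Next I would link the problem to the factorization criterion already available in this paper. Associate with $\phi$ the holomorphic function $g$ on $\mathbb{D}$ determined by ${\rm Re}\,\underline{g}=\frac{1}{2}\phi$ and ${\rm Im}\,g(0)=0$, and put $F:=\exp(ig)$. A short computation, using that ${\rm Im}\,\underline{g}=\frac{1}{2}\tilde\phi$ is the conjugate function of $\phi$, shows that $F$ is an outer function in $\cN(\mathbb{D})$ with $\log|\underline{F}|=-\frac{1}{2}\tilde\phi$ and $\arg\underline{F}=\frac{1}{2}\phi$ $m$-a.e., so that $U=\underline{F}\,(\underline{F}^{\ast})^{-1}$ holds $m$-a.e. on $\mathbb{T}$. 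Proposition~\ref{p2.14}, applied with $D=F$, now reads: $U$ is \emph{exactly} a quotient $\underline{I_2}\,\underline{I_1}^{-1}$ of boundary values of inner functions if and only if $F\in\Pi(\mathbb{D})$. This is the structural observation I would emphasise, as it exhibits the present theorem as precisely the \emph{approximate} counterpart of Proposition~\ref{p2.14}: for a generic $U$ the outer function $F$ fails to be pseudocontinuable, and the assertion is that $U$ can nevertheless be moved by an arbitrarily small uniform amount to a unimodular function arising from a pseudocontinuable outer function.

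Accordingly, the remaining step is the approximation: produce a pseudocontinuable outer function $\tilde{F}$ with $\|\arg\underline{\tilde{F}}-\frac{1}{2}\phi\|_{L^\infty(\mathbb{T})}<\varepsilon/2$. Setting $\tilde{U}:=\underline{\tilde{F}}\,(\underline{\tilde{F}}^{\ast})^{-1}$ and using $|e^{ia}-e^{ib}|\le|a-b|$, one obtains $\|U-\tilde{U}\|_{L^\infty(\mathbb{T})}<\varepsilon$, whereupon Proposition~\ref{p2.14} supplies inner functions $I_1,I_2$ with $\tilde{U}=\underline{I_2}\,\underline{I_1}^{-1}$, as required.

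The main obstacle is exactly this last approximation, and it is a genuinely hard analytic fact rather than a formal consequence of the material above. It cannot be reached by smooth or rational approximants: a bounded measurable phase $\phi$ admits in general no uniform approximation by continuous functions, so the outer functions $\tilde{F}$ one is forced to use have genuinely discontinuous boundary behaviour, and the inner functions delivered by Proposition~\ref{p2.14} are correspondingly non-elementary (singular inner functions and infinite Blaschke products). The source of the difficulty is that the obstruction to pseudocontinuability of $F$ is carried by the conjugate function $\tilde\phi$, which lies only in ${\rm BMO}$ and is typically unbounded; the construction must therefore alter $U$ on a set of arbitrarily small $m$-measure while keeping the change in argument below $\varepsilon/2$ in the uniform norm, the unboundedness of $\tilde\phi$ being the essential source of the difficulty. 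Engineering such a modification is the content of the Douglas--Rudin construction, and I would complete the proof by importing it in the form given in Garnett \cite[Chapter~V]{G}.
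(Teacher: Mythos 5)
The first thing to note is that the paper contains no proof of Theorem~\ref{4.7} to compare against: the statement is quoted as the classical Douglas--Rudin theorem, with a pointer to Garnett \cite[Theorem 2.1 in Chapter~V]{G}. Your preliminary analysis is mathematically sound, and it accurately mirrors the paper's own commentary following Theorem~\ref{4.7}. Writing $U=e^{i\phi}$, taking $g$ holomorphic in $\dD$ with ${\rm Re}\,\underline{g}=\phi/2$ and ${\rm Im}\,g(0)=0$, and setting $F:=\exp(ig)$ indeed yields an outer function in $\cN(\dD)$ (its boundary log-modulus is $-\tilde\phi/2$, which is $m$-integrable since $\phi\in L^\infty\subseteq L^2$ and the conjugation operator is bounded on $L^2$) with $U=\underline{F}\,(\underline{F}^{\ast})^{-1}$ $m$-a.e.; Proposition~\ref{p2.14} then says that $U$ is \emph{exactly} a quotient $\underline{I_2}\,\underline{I_1}^{-1}$ if and only if $F\in\Pi(\dD)$, which is precisely the ``extremal situation'' the paper points out when combining Theorems~\ref{4.3} and \ref{4.5} with Theorem~\ref{4.7}.

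As a proof, however, your proposal is circular. Your ``remaining step'' --- the existence of a pseudocontinuable outer $\tilde{F}$ with $\|\arg\underline{\tilde{F}}-\phi/2\|_{L^\infty(\mathbb T)}<\varepsilon/2$ --- is not a reduction of Theorem~\ref{4.7} but a reformulation of it. It implies the theorem by your own argument; and conversely, given inner functions $I_1,I_2$ with $\|U-\underline{I_2}\,\underline{I_1}^{-1}\|_{L^\infty(\mathbb T)}<\varepsilon$, one may choose a measurable argument $\tilde\phi$ of $\underline{I_2}\,\underline{I_1}^{-1}$ pointwise close to $\phi$, build the outer function $\tilde{F}$ from $\tilde\phi/2$ by your recipe, and invoke the converse half of Proposition~\ref{p2.14} --- which, as the paper explicitly remarks after its proof, does not even use outerness --- to conclude $\tilde{F}\in\Pi(\dD)$. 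So the two assertions are equivalent up to a constant in $\varepsilon$, and you then discharge the step by ``importing the Douglas--Rudin construction in the form given in Garnett'', i.e.\ by citing the very theorem being proved. None of the actual content of that construction --- the approximation of a unimodular function by quotients of (infinite) Blaschke products, resting on a weak-star approximation of the relevant measures by discrete ones --- appears in your text, and it is not derivable from the material of this paper. In short: if the aim was a self-contained proof, the entire analytic core is missing; if the aim was to treat the statement as the paper does, then your treatment coincides with the paper's (a citation of \cite{G}), and your Proposition~\ref{p2.14} gloss, while correct and genuinely illuminating about \emph{why} the theorem is the approximate counterpart of pseudocontinuability, carries no probative weight.
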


Thus, the combination of Theorem~\ref{4.3} with Theorem~\ref{4.5} shows that the phenomenon
of pseudocontinuability of a non-inner function $\Theta\in\cS (\mathbb D)$ is related to the fact
that for the unimodular function
\[
U := \underline{D} \cdot (\underline{D}^{\ast})^{-1}
\]
an extremal situation can be met in Theorem \ref{4.7}. Namely, there exist functions 
$I_1,I_2\in\cI(\dD)$ satisfying the equation
$U = \underline{I_2} \cdot \underline{I_1}^{-1}$.

\subsection*{Acknowledgment}

The authors are grateful to Professor V.E. Katsnelson, who carefully read an older version of this manuscript.
His comments and remarks very much influenced and improved the conception and presentation of the content 
of this paper.


\end{document}